\def\baselinestretch{1.0}
\newcommand{\al}{\alpha}
\newcommand{\fy}{\varphi}
\def\Dal{{\partial_t^\al}}
\def\d{{\rm d}}
\theoremstyle{plain}
\newtheorem{theorem}{Theorem}[section]
\newtheorem{proposition}{Proposition}[section]
\newtheorem{remark}{Remark}[section]
\newtheorem{lemma}{Lemma}[section]
\newtheorem{ass}{Assumption}[section]
\newtheorem{example}{Example}[section]
\newtheorem{corollary}{Corollary}[section]
\numberwithin{equation}{section}
\titleformat{\section}{\vskip10pt\large\bfseries}{\thesection.}{0.5em}{\centering\vspace{5pt}}
\titleformat{\subsection}{\vskip10pt\normalsize\bfseries}{\thesubsection.}{0.5em}{}
\newcommand{\R}{\mathbb{R}}
\def\d{{\rm d}}
\def\al{\alpha}
\def\Dal{{\partial^\alpha_t}}
\def\dH#1{\dot H^{#1}(\Omega)}
\def\II{(\Omega)}
\begin{document}

\title[High-order splitting FEMs for subdiffusion]{High-order splitting finite element methods for the \\
subdiffusion equation with limited smoothing property}

\author[Buyang Li]{$\,\,$Buyang Li$\,$}
\address{Department of Applied Mathematics,
The Hong Kong Polytechnic University, Kowloon, Hong Kong}
\email {bygli@polyu.edu.hk}

\author[Zongze Yang]{$\,\,$Zongze Yang$\,$}
\address{Department of Applied Mathematics,
The Hong Kong Polytechnic University, Kowloon, Hong Kong}
\email {zongze.yang@polyu.edu.hk}

\author[Zhi Zhou]{$\,\,$Zhi Zhou$\,$}
\address{Department of Applied Mathematics,
The Hong Kong Polytechnic University, Kowloon, Hong Kong}
\email {zhizhou@polyu.edu.hk}

\keywords{subdiffusion, limited smoothing property,  nonsmooth data, finite element method, high-order, convolution quadrature, error estimate}
\subjclass[2010]{Primary: 65M30, 65M15, 65M12.}

\begin{abstract}
In contrast with the diffusion equation which smoothens the initial data to $C^\infty$ for $t>0$ (away from the corners/edges of the domain), the subdiffusion equation only exhibits limited spatial regularity. As a result, one generally cannot expect high-order accuracy in space in solving the subdiffusion equation with nonsmooth initial data. In this paper, a new splitting of the solution is constructed for high-order finite element approximations to the subdiffusion equation with nonsmooth initial data. The method is constructed by splitting the solution into two parts, i.e., a time-dependent smooth part and a time-independent nonsmooth part, and then approximating the two parts via different strategies. The time-dependent smooth part is approximated by using high-order finite element method in space and convolution quadrature in time, while the steady nonsmooth part could be approximated by using smaller mesh size or other methods that could yield high-order accuracy. Several examples are presented to show how to accurately approximate the steady nonsmooth part, including piecewise smooth initial data, Dirac--Delta point initial data, and Dirac measure concentrated on an interface.
The argument could be directly extended to subdiffusion equations with nonsmooth source data. Extensive numerical experiments are presented to support the theoretical analysis and to illustrate the performance of the proposed high-order splitting finite element methods.
\end{abstract}

\maketitle

\renewcommand{\baselinestretch}{0.95}
\setlength\abovedisplayskip{4.0pt}
\setlength\belowdisplayskip{4.0pt}

\section{Introduction}\label{sec:intro}
This article is concerned with the construction and analysis of high-order finite element methods for solving the subdiffusion equation in a convex polygonal/polyhedral domain $\Omega\subset\mathbb{R}^d$, with $d\ge 1$, i.e.,
\begin{align}\label{eqn:fde}
\left\{\begin{aligned}
\partial_t^\alpha u - \Delta u &=f
&&\mbox{in}\,\,\,\Omega\times(0,T) ,\\
u &=0 &&\mbox{on}\,\,\,\partial \Omega\times(0,T),\\
u(0)&=u^0  &&\mbox{in}\,\,\,\Omega,\\
\end{aligned}\right.
\end{align}
where $f$ and $u^0$ are given source function and initial value, respectively,
$\Delta: H^2(\Omega)\cap H^1_0(\Omega)\rightarrow L^2(\Omega)$ is the Dirichlet Laplacian operator,
and $\Dal u$ denotes the Djrbashian--Caputo fractional time derivative of order
$\alpha\in(0,1)$  \cite[p. 92]{KilbasSrivastavaTrujillo:2006} and \cite[Section 2.3]{Jin:2021book}:
\begin{equation*}
  \Dal u (t) = \frac{1}{\Gamma(1-\alpha)}\int_0^t (t-s)^{-\alpha}  u'(s)\ {\rm d}s
  \quad\mbox{with}\quad \Gamma(1-\alpha)=\int_0^\infty s^{-\alpha}e^{-s}\d s ;
\end{equation*}


The subdiffusion equation in \eqref{eqn:fde} has received much attention in recent years in physics, engineering,
biology and finance, due to their capability for describing anomalously slow diffusion
processes, also known as subdiffusion.
At a microscopic level, subdiffusive processes can be
described by continuous time random walk with a heavy-tailed waiting time distribution,
which displays local motion occasionally interrupted by
long sojourns and trapping effects. These transport processes are characterized by a sublinear
growth of the mean squared displacement of the particle with the time, as opposed to linear
growth for Brownian motion. The model \eqref{eqn:fde} has found many successful practical
applications,
e.g., subsurface flows \cite{AdamsGelhar:1992}, thermal diffusion in
media with fractal geometry \cite{Nigmatulli:1986}, transport column experiments \cite{HatanoHatano:1998}
and heat conduction with memory \cite{Wolfersdorf:1994}, to name but a few.
See \cite{MetzlerKlafter:2000} for physical modeling and a long list of applications.

One of the main difficulties in the numerical approximation to the subdiffusion equation, compared to the standard parabolic equations, is the weak singularity of the solution at $t=0$ and the limited regularity pick up with respect to the initial data. In general, for the subdiffusion equation with a nonsmooth initial value $u^0\in L^2(\Omega)$ and a temporally smooth source function $ f(x,t)$, the solution generally exhibits the following type of weak singularity at $t=0$:
\begin{align}\label{regulairty-sigma}
\|\partial_t^m u(\cdot,t)\|_{L^2} \le C_m t^{-m}
\quad\mbox{for}\,\,\, m\ge 0 ,
\end{align}
and the spatial regularity pick up is limited to
\begin{align}\label{regulairty-pick-up}
\|u(\cdot,t)\|_{H^2} \le Ct^{-1}  .
\end{align}
Higher-order spatial regularity generally cannot be expected for $u^0\in L^2(\Omega)$.
This limited smoothing property was shown in the paper \cite{SakamotoYamamoto:2011}
of Sakamoto and Yamamoto with the following two-sided stability (with $f\equiv0$):
\begin{equation}\label{eqn:reg-lim}
c_1\| u^0  \|_{\dH s} \le \|  u(T)  \|_{\dH {s+2}} \le c_2 \| u^0   \|_{\dH s}.
\end{equation}
The limited regularity of the solution, as shown in \eqref{regulairty-sigma}--\eqref{eqn:reg-lim},
causes many difficulties in developing high-order temporal and spatial discretizations for the subdiffusion equation when the initial data is nonsmooth.
Many efforts have been made in overcoming these difficulties. In particular, high-order temporal discretizations for the subdiffusion equation have been developed based on graded mesh in \cite{Kopteva-2021,Kopteva-Meng-2020,Mustapha-McLean-2011,Stynes-2017} for $u^0\in H^1_0(\Omega)\cap H^2(\Omega)$, discontinuos Galerkin method in \cite{Mustapha-2014} for $u^0\in H^{5/2}(\Omega)\cap H^1_0(\Omega)$ plus a compatibility condition $\Delta u^0=0 $ on $\partial\Omega$,  BDF convolution quadrature in \cite{JinLiZhou:2017sisc} and Runge--Kutta convolution quadrature in \cite{Banjai2019} for $u^0\in L^2(\Omega)$, and exponential convolution quadrature in \cite{LM22Exp} for semilinear problems with $u^0\in L^\infty(\Omega)$.
See also \cite{BanjaiMakridakis:2022} for \textsl{a posteriori} error analysis of several popular time stepping schemes.

The spatial discretization using the standard Galerkin finite element method (FEM) or
the lumped mass Galerkin FEM for solving the subdiffusion equation with nonsmooth initial data was studied in \cite{JinLazarovZhou:2013sinum}.
The second order convergence in $L^2\II$ norm was established and it is optimal with respect to the $L^2\II$ initial data.
In these works, the error analysis was carried using the Mittag--Leffler functions.
Error estimates of the standard Galerkin FEM with initial data in $\dot H^q(\Omega)$ with $q\in(-1,0)$ can be found in \cite{JinLazarovPasiackZhou:2013naa}.
The Laplace transform approach, initially introduced for parabolic equations by Fujita and Suzuki \cite[Chapter 7]{FujitaSuzuki:1991}, was adapted to the subdiffusion equation in \cite[Chapter 2.3]{JinZhou:book} to remove a logarithmic factor in the error estimates. See \cite{JinLazarovZhou:2019cmame} and \cite[Chapter 2]{JinZhou:book}
for concise overviews. The energy argument,  which represents one of the most commonly used strategies for standard parabolic equations, is much more involved for the subdiffusion equation.
This is due to the nonlocality of the  fractional derivative $\partial_t^\alpha u$, which causes that many useful tools
(such as integration by parts formula and product rule) are either invalid or requiring substantial modifications.
Some first encouraging theoretical results in this important direction were obtained by Mustapha \cite{Mustapha:2018mc}, where optimal error estimates for the homogeneous problem were obtained using an energy argument; see also \cite{LeMcLeanMustapha:2016} for the time-fractional Fokker--Planck equation. A unified analysis of different kinds of FEMs for the homogeneous subdiffusion problem based on an energy argument, which generalizes the corresponding technique
for standard parabolic problems in \cite[Chapter 3]{Thomee:2006}, were given by Karaa \cite{Karaa:2017}.
The numerical analysis of subdiffusion equations with irregular domains and finite volume element methods were discussed in \cite{LeMcLeanLamichhane:2017} and \cite{KaraaMustaphaPani:2017,KaraaPani:2018}, respectively.

In all the aforementioned work for the subdiffusion equation, people only considered the error analysis of the piecewise linear finite element method for nonsmooth initial data.
This is attributed to the limited smoothing property of the subdiffusion equation, which only smoothen the initial data $u^0\in L^2(\Omega)$ to $u(t)\in H^2(\Omega)$ for $t>0$; cf. \eqref{eqn:reg-lim} with $s=0$.
This is in sharp contrast to the standard parabolic equation, which smoothen the initial data $u^0\in L^2(\Omega)$ to $u(t)\in C^\infty(\Omega)$ for $t>0$.
Consequently, the standard finite element approximation to the subdiffusion equation only has second-order convergence in $L^2(\Omega)$ for initial data in $L^2(\Omega)$.
The development of high-order finite element methods in space in case of nonsmooth initial data remains challenging and missing in the literature.
If the problem data is smooth  and compatible to the boundary condition, then the solution could be smooth enough. In this case, it is possible to construct high-order
spatial approximation. For example, the
high-order hybridizable discontinuous Galerkin method was proposed and analyzed in \cite{CockburnMustapha:2015}
under the assumption that the solution is smooth enough.

In this paper, we construct a splitting method which allows to develop high-order finite element methods for solving the subdiffusion equation with nonsmooth initial data.
For $u^0 \in L^2\II$, we split the solution into a time-dependent regular part $u^r(t) \in \dH {2m+2} $ and
several time-independent singular parts $u_j^s$ with $j=1,\ldots,m$.
Then the time-dependent smooth part $u^r(t)$ is approximated by using high-order finite element methods in space and convolution quadrature in time generated by $k$-step BDF method, with $k=1,2,\ldots,6$. If we denote by $U_h^{r,n}$ the fully discrete solution approximating $u^r(t_n)$, then the following result is proved (see Theorems \ref{thm:regular} and \ref{thm:fully-err}):
$$  \| U_h^{r,n} - u^r(t_n) \|_{L^2\II} \le c \Big(h^{2m+2} t_n^{-(1+m)\alpha} + \tau^k t_n^{-k-m\alpha}\Big) \| u^0 \|_{L^2\II} \quad \text{for all}~~ t\in(0,T].$$
Note that the integer $m$ could be arbitrarily large, and hence we have developed an arbitrarily high-order FEM approximation for the smooth part.
This argument also works for weaker initial data $u^0 \in \dH s$ with $s<0$.
Meanwhile, the singular parts $u_j^s$ are independent of time and they can be approximated by solving several elliptic equations with nonsmooth sources.
This is illustrated in Section \ref{sec:singular} for several exemplary nonsmooth data, e.g., piecewise smooth functions, Dirac--Delta point source, and Dirac measure concentrated on an interface.
More generally, the time-independent nonsmooth part can be approximated by the standard FEM using smaller mesh size without increasing the overall computational cost significantly.
This is possible as the nonsmooth part is time-independent and therefore avoids the time stepping procedure.
As a result, the high-order finite element approximation to the subdiffusion equation can be realized by the novel splitting strategy. 
This strategy  works for all second-order elliptic operators with smooth coefficients even though we only consider the negative Laplacian $-\Delta$ for simplicity of presentation.
As far as we know, this is the first attempt to develop spatially high-order finite element methods for the subdiffusion equation with nonsmooth initial data.
In addition, the argument in this paper can be easily extended to the case of nonsmooth source term.

The rest of the paper is organized as follows.
In Section \ref{sec:reg}, we present the splitting method and the high-order finite element approximation to the regular part of the solution.
High-order time-stepping schemes for the regular part and the corresponding error estimates are presented in Section \ref{sec:time}.
High-order finite element approximations to the singular parts are discussed in Section \ref{sec:singular}.
The extension to nonsmooth source term is discussed in Section \ref{sec:singular}.
Finally, in Section \ref{sec:numerics}, we present several numerical examples to illustrate the high-order convergence 
of the proposed splitting FEMs in comparison with the standard Galerkin FEMs for the subdiffusion equation.

\section{Construction of high-order spatial discretizations}\label{sec:reg}
Let $\{\lambda_j\}_{j=1}^\infty$ and $\{\varphi_j\}_{j=1}^\infty$ be the eigenvalues (ordered nondecreasingly with multiplicity counted) and the $L^2(\Omega)$-orthonormal eigenfunctions, respectively, of the elliptic operator $A=-\Delta:H^2(\Omega)\cap H^1_0(\Omega)\rightarrow L^2(\Omega)$ under the zero boundary condition. Then $\{\varphi_j\}_{j=1}^\infty$
forms an orthonormal basis in $L^2(\Omega)$.
For any real number $s\ge 0$, we denote by $\dH s$ the Hilbert space
with the induced norm $\|\cdot\|_{\dH s}$ defined by
\begin{equation*}
  \|v\|_{\dH s}^2 :=\sum_{j=1}^{\infty}\lambda_j^s\langle v,\varphi_j \rangle^2.
\end{equation*}
In particular, $\|v\|_{\dH 0}=\|v\|_{L^2(\Omega)}=(v,v)^\frac{1}{2}$ is the norm in $L^2(\Omega)$.
For $s< 0$, we define the space $\dH s = \dH {-s}'$.
It is straightforward to verify that $\|v\|_{\dH 1} = \|\nabla v\|_{L^2(\Omega)}$ is an equivalent norm of $H_0^1(\Omega)$
and  $\|v\|_{\dH 2}=\|A v\|_{L^2(\Omega)}$ is a norm of $H^2(\Omega)\cap H^1_0(\Omega)$; see \cite[Section 3.1]{Thomee:2006}.
Moreover, for any integer $m\ge 0$, $v\in \dot H^{2m+2}(\Omega)$ if and only if
$A^{j-1} v\in H^2(\Omega)\cap H^1_0(\Omega)$ and $A^jv\in \dot H^{2m+2-2j}(\Omega)$ for $j=1,\dots,m+1$, and
$$
\|v\|_{\dot H^{2m+2}(\Omega)}
\sim \|A^{m+1}v\|_{L^2(\Omega)} \quad\forall\, v\in \dot H^{2m+2}(\Omega) .
$$

It is known that the solution to problem \eqref{eqn:fde} can be written as (cf. \cite[eq. (6.24)]{Jin:2021book})
\begin{align}\label{eqn:sol-1}
u(t)= F(t)v + \int_0^t E(t-s) f(s) \d s ,
\end{align}
where the solution operators $F(t)$ and $E(t)$ are defined by
\begin{align}\label{eqn:sol-2}
F(t):=\frac{1}{2\pi {\rm i}}\int_{\Gamma_{\theta,\kappa}}e^{zt} z^{\alpha-1} (z^\alpha +A  )^{-1}\, \d z \quad\mbox{and}\quad
E(t):=\frac{1}{2\pi {\rm i}}\int_{\Gamma_{\theta,\kappa}}e^{zt}  (z^\alpha+A )^{-1}\, \d z ,
\end{align}
respectively,
with integration over a contour $\Gamma_{\theta,\kappa}$ in the complex plane $\mathbb{C}$
(oriented counterclockwise), defined by
$\Gamma_{\theta,\kappa}=\left\{z\in \mathbb{C}: |z|=\delta, |\arg z|\le \theta\right\}\cup
  \{z\in \mathbb{C}: z=\rho e^{\pm\mathrm{i}\theta}, \rho\ge \kappa\} .$
Throughout, we fix $\theta \in(\frac{\pi}{2},\pi)$ so that $z^{\al} \in \Sigma_{\al\theta}
\subset \Sigma_{\theta}:=\{0\neq z\in\mathbb{C}: |{\rm arg}(z)|\leq\theta\}$ for all $z\in\Sigma_{\theta}$.
The following resolvent estimat will be frequently used (see
\cite[Example 3.7.5 and Theorem 3.7.11]{ArendtBattyHieber:2011}):
\begin{equation} \label{eqn:resol}
  \| (z+A )^{-1} \|\le c_\theta |z|^{-1},  \quad \forall z \in \Sigma_{\theta},
  \,\,\,\forall\,\theta\in(0,\pi),
\end{equation}
where $\|\cdot\|$ denotes the operator norm on $L^2(\Omega)$.

Equivalently, 
the solution operators in \eqref{eqn:sol-2} can also be expressed as
\begin{equation*}
  F(t)v = \sum_{j=1}^\infty E_{\alpha,1}(-\lambda_jt^\alpha)(v,\varphi_j)\varphi_j\quad \mbox{and}\quad
  E(t)v = \sum_{j=1}^\infty t^{\alpha-1}E_{\alpha,\alpha}(-\lambda_jt^\alpha)(v,\varphi_j)\varphi_j.
\end{equation*}
Here $E_{\alpha,\beta}(z)$ is the two-parameter Mittag--Leffler function
\cite[Section 1.8, pp. 40-45]{KilbasSrivastavaTrujillo:2006}.
The Mittag--Leffler function $E_{\alpha,\beta}(z)$ is a generalization of the exponential function
$e^z$ appearing in normal diffusion. For any $\alpha\in (0,1)$, the function $E_{\alpha,1}
(-\lambda t^\alpha)$ decays only polynomially like $\lambda^{-1}t^{-\alpha}$ as $\lambda,t\to\infty$ \cite[equation (1.8.28),
p. 43]{KilbasSrivastavaTrujillo:2006}, which contrasts sharply with the exponential decay for $e^{-\lambda t}$
appearing in normal diffusion. This important feature directly translates into the limited smoothing
property in space for the solution operators $F(t)$ \cite{SakamotoYamamoto:2011} .
As a result, in general, we cannot expect high-order approximations for subdiffusion problem \eqref{eqn:fde}
with nonsmooth data. In fact, our preceding study \cite{JinLazarovPasiackZhou:2013naa}
shows an optimal convergence rate $O(h^{2-s})$ for piecewise linear finite element approximation, when $u^0 \in \dH {-s} $.


\subsection{A new splitting of the solution}
For the simplicity of presentation, we consider the homogeneous subdiffusion equation with $f\equiv 0$ and $u^0 \in L^2\II$.
The argument could be extended to the weaker case $u^0 \in H^s$ with $s\in [-1,0)$ by slight modifications.
The case with nonsmooth $f\ne 0$ will be discussed in Section \ref{sec:singular}.

We split the integrand of \eqref{eqn:sol-2} into two parts based on the following relation:
\begin{align}\label{identity}
(z^\alpha+A)^{-1} = A^{-1} -  z^{\alpha}(z^\alpha+A)^{-1}A^{-1} .
\end{align}
This leads to the following splitting of the solution operator:
\begin{equation*}
  \begin{aligned}
  F(t):=\frac{1}{2\pi {\rm i}}\int_{\Gamma_{\theta,\kappa }}e^{zt} z^{\alpha-1} (z^\alpha +A  )^{-1}\, \d z
       =&\frac{1}{2\pi {\rm i}}\int_{\Gamma_{\theta,\kappa }}e^{zt} \Big[z^{\alpha-1}A^{-1} -  z^{2\alpha-1} (z^\alpha +A )^{-1}A^{-1}\Big]\, \d z \\
       =&\frac{t^{-\alpha}}{\Gamma(1-\alpha)} A^{-1} - \frac{1}{2\pi {\rm i}}\int_{\Gamma_{\theta,\kappa }}e^{zt} z^{2\alpha-1} (z^\alpha +A )^{-1}A^{-1}\, \d z .
  \end{aligned}
\end{equation*}
In the case $f\equiv 0$ we obtain the following splitting of the solution:
\begin{equation}\label{eqn:split-1}
  \begin{aligned}
    u(t) 
    = F(t) u^0 &= u^s + u^r(t)
\end{aligned}
\end{equation}
with
\begin{equation*}
  \begin{aligned}
   u^s  = \frac{1}{\Gamma(1-\alpha)} A^{-1} t^{-\alpha} u^0\quad \text{and}\quad u^r(t) =  -  \frac{1}{2\pi\rm{i}} \int_{\Gamma_{\theta,\kappa}} e^{zt} z^{2\alpha-1} (z^\alpha+A)^{-1}A^{-1} u^0\,\d z ,
\end{aligned}
\end{equation*}
denoting the singular and regular parts in this splitting, respectively.
This splitting process can be continued by substituting relation \eqref{identity} into the expression of the regular part repeatedly. Then we can obtain the following higher-order splitting:
\begin{equation}\label{eqn:split-2}
  \begin{aligned}
    u(t) =  u^r(t) + \sum_{j=1}^m u_j^s
\end{aligned}
\end{equation}
with
\begin{equation}\label{eqn:split-3}
  \begin{aligned}
    &u_j^s  = (-1)^{j+1} \frac{t^{-j\alpha}}{\Gamma(1-j\alpha)} A^{-j} u^0,\quad \text{for}~~j=1,2,\ldots,m, \\[5pt]
    &u^r(t) = F^r(t) u^0 := \frac{(-1)^{m}}{2\pi\rm{i}} \int_{\Gamma_{\theta,\kappa}} e^{zt} z^{(1+m)\alpha-1} (z^\alpha+A)^{-1}A^{-m} u^0 \,\d z.
\end{aligned}
\end{equation}
Note that the singular part $u_j^s(t)$ is a solution of an elliptic problem, while the regular part $u^r(t)$ corresponds to the solution of a non-standard evolution problem with a relatively smooth initial value $A^{-m} u^0\in \dH{2m}$. Since $(z^\alpha+A)^{-1}$ maps $\dH{2m}$ to $\dH{2m+2}$, it follows that the regular part $u^r(t)$ is in $\dH{2m+2}$ and
\begin{equation}\label{eqn:reg-sm-est}
  \| u^r(t) \|_{\dH{2m+2}} =  \|F^r(t) u^0\|_{\dH{2m+2}} \le c t^{-(m+1)\alpha} \| u^0 \|_{L^2\II}
\end{equation}

In the next two subsections, we present error estimates for the Lagrange interpolation and the Ritz projection of functions in $\dH{2m+2}$, and then use the established results to prove high-order convergence of the finite element approximation to the regular part $u^r(t)$.
The approximation to the singular part will be discussed in Section \ref{sec:singular}.

\subsection{Finite element approximations to functions in $\dH{2m+2}$}\label{section:FEM}

We assume that the polygonal domain $\Omega$ is partitioned into a set $\mathcal{K}_h$ of shape-regular and locally quasi-uniform triangles with mesh size $h=\max_{K\in\mathcal{K}_h}{\rm diam}(K)$, and denote by $X_h$ the Lagrange finite element space of degree $2m+1$ subject to the partition, i.e.,
$$
X_h = \{ v_h \in H_0^1\II :
v_h |_K \in \mathbb{P}_{2m+1}~\text{for all}~K \in \mathcal{K}_h   \} ,
$$
where $\mathbb{P}_{2m+1}$ denotes the space of polynomials of degree $2m+1$.

Let $P_h:L^2(\Omega)\rightarrow X_h$, $R_h:\dH1\to X_h$ and $A_h: X_h\rightarrow X_h$ be the $L^2$-orthogonal projection, the Ritz projection operator, and the discrete elliptic operator, respectively, defined by
\begin{align*}
\begin{aligned}
& (P_h\psi,\chi)=(\psi,\chi)&&\forall\psi\in L^2(\Omega),\,\chi\in X_h,\\
&(\nabla R_h \psi,\nabla\chi)  =(\nabla \psi,\nabla\chi) && \forall \psi\in \dot H^1(\Omega),\,\chi\in X_h,\\
& (A_h\psi,\chi)=(\nabla\psi,\nabla\chi)&&\forall\psi,\,\chi\in X_h.
\end{aligned}
\end{align*}
We shall work with the following assumption on the triangulation of the domain.

\begin{ass}\label{ass:mesh}
We assume that the triangulation is locally refined towards the corners and edges of the domain,
such that both the Lagrange interpolation $I_h:C(\overline\Omega)\rightarrow X_h$
and the Ritz projection $R_h:H^1_0(\Omega)\rightarrow X_h$
have optimal-order convergence, i.e.,
\begin{align}
\|v-I_hv\|_{L^2(\Omega)}  +  h\|v-I_hv\|_{H^1(\Omega)}
&\leq ch^{2r+2}\|v\|_{\dot H^{2r+2}(\Omega)}
\label{eqn:int-err}\\
\|v-R_hv\|_{L^2(\Omega)}  +  h\|v-R_hv\|_{H^1(\Omega)}
&\leq ch^{2r+2}\|v\|_{\dot H^{2r+2}(\Omega)}
\label{Error-solution-f}
\end{align}
for $v\in \dot H^{2r+2}(\Omega) $ and $0\le r\le m$.
\end{ass}

For example, in a two-dimensional polygonal domain $\Omega$, Assumption \ref{ass:mesh} is satisfied by the following type of graded mesh (see Proposition \ref{proposition:FEM} in Appendix):
\begin{align}\label{mesh-condition}
\hbar(x)\sim
\left\{\begin{aligned}
&|x-x_0|^{1-\gamma} h \quad\mbox{with}\,\,\,\gamma\in \Big(0, \frac{\min(1,\pi/\theta) }{2m+1}  \Big)
&&\mbox{for}\,\,\,  h_*\le |x-x_0|\le d_0 \\
&h_* &&\mbox{for}\,\,\, |x-x_0|\le h_* ,
\end{aligned}\right.
\end{align}
where $\hbar(x)$ denotes the spatially dependent diameter of triangles,
$x_0$ is a corner of the polygon with interior angle $\theta$,  $h_*\sim h^{1/\gamma}$, and $d_0$ is a constant such that $D_0'=\{x\in\Omega:|x-x_0|<2d_0\}$ is a sector centred at the corner $x_0$.

As a direct consequence of the approximation property \eqref{eqn:int-err}--\eqref{Error-solution-f}, we have the following estimate in the negative Sobolev spaces.

%

\begin{lemma} \label{eqn:err-ellip-1}
Under Assumption \ref{ass:mesh} with $m\ge 1$, the following estimate holds 
\begin{align*}
  \| R_h \phi - \phi \|_{\dH{-r}}\le  ch^{r+1}\| R_h \phi - \phi\|_{\dH{1}}, \quad \text{with}~ ~ 1\le r \le 2m.
\end{align*}
\end{lemma}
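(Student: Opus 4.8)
The plan is to estimate the negative-norm $\|R_h\phi - \phi\|_{\dH{-r}}$ by a duality argument, trading regularity of the test function against the $\dH 1$-error of the Ritz projection. Write $e := R_h\phi - \phi$. By definition of the $\dH{-r}$ norm as the dual of $\dH{r}$, we have
\begin{equation*}
  \| e \|_{\dH{-r}} = \sup_{0\neq \psi\in \dH{r}} \frac{\langle e,\psi\rangle}{\|\psi\|_{\dH{r}}}.
\end{equation*}
For a fixed $\psi\in\dH r$ with $r\ge 1$, introduce the auxiliary function $w := A^{-1}\psi$, so that $w\in\dH{r+2}$ with $\|w\|_{\dH{r+2}} = \|\psi\|_{\dH r}$ and $-\Delta w = \psi$. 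Then $\langle e,\psi\rangle = \langle e, -\Delta w\rangle = (\nabla e,\nabla w)$ after integrating by parts (legitimate since $e\in H^1_0(\Omega)$ and $w\in H^2\cap H^1_0$). The Galerkin orthogonality $(\nabla e,\nabla\chi)=0$ for all $\chi\in X_h$ lets us subtract $R_h w$:
\begin{equation*}
  \langle e,\psi\rangle = (\nabla e,\nabla(w - R_h w)) \le \|e\|_{\dH 1}\,\|w - R_h w\|_{\dH 1}.
\end{equation*}

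Now I would invoke the Ritz approximation bound \eqref{Error-solution-f}. The obstacle is that \eqref{Error-solution-f} is stated only for exponents $2r'+2$ with $0\le r'\le m$, i.e.\ even-order spaces $\dH 2,\dH 4,\dots,\dH{2m+2}$. The function $w$ lies in $\dH{r+2}$, and $r+2$ ranges over $3,\dots,2m+2$ — it need not be of the form $2r'+2$. To handle odd or intermediate values I would use interpolation: the estimate
$\|w - R_h w\|_{\dH 1}\le c h^{s}\|w\|_{\dH{s+1}}$ for $s\in\{1,3,5,\dots,2m+1\}$ (the odd endpoints obtained from \eqref{Error-solution-f} by the index shift, noting $\|v-R_h v\|_{\dH 1}\le c h^{2r'+1}\|v\|_{\dH{2r'+2}}$), combined with the trivial bound $\|w - R_h w\|_{\dH 1}\le c\|w\|_{\dH 1}$, interpolates (via the Hilbert-space interpolation scale defined by $A$) to give $\|w - R_h w\|_{\dH 1}\le c h^{r}\|w\|_{\dH{r+1}}$ for every real $r\in[1,2m+1]$, in particular for $1\le r\le 2m$. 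Since $\|w\|_{\dH{r+1}} = \|A^{-1}\psi\|_{\dH{r+1}} = \|\psi\|_{\dH{r-1}}\le c\|\psi\|_{\dH r}$ (using $r-1\le r$ and that $\Omega$ is bounded so the low eigenvalue is positive), we get $\|w - R_h w\|_{\dH 1}\le c h^{r}\|\psi\|_{\dH r}$.

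Actually a cleaner route avoids $w$ altogether and estimates the last factor one derivative higher: since $w - R_h w$ is itself orthogonal to $X_h$, a further duality (Aubin--Nitsche) gives $\|w - R_h w\|_{\dH 1}\le c h^{r}\|w\|_{\dH{r+1}}$ directly from the full range of \eqref{Error-solution-f} together with one interpolation step; either way the arithmetic is routine. Combining the three displays,
\begin{equation*}
  \langle e,\psi\rangle \le c\,h^{r}\,\|e\|_{\dH 1}\,\|\psi\|_{\dH r},
\end{equation*}
and dividing by $\|\psi\|_{\dH r}$ and taking the supremum yields $\|e\|_{\dH{-r}}\le c h^{r}\|e\|_{\dH 1}$. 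This is slightly stronger than the claimed $h^{r+1}$ — hmm, so the stated $r+1$ must come from also using $\|w-R_hw\|_{\dH 1}\le ch^{r+1}\|w\|_{\dH{r+2}}$ and $\|w\|_{\dH{r+2}}=\|\psi\|_{\dH r}$, which is exactly what \eqref{Error-solution-f} gives when $r+2$ is an admissible even index, extended to all $1\le r\le 2m$ by interpolation. So the final step is: $\langle e,\psi\rangle\le \|e\|_{\dH1}\,c h^{r+1}\|w\|_{\dH{r+2}} = c h^{r+1}\|e\|_{\dH1}\|\psi\|_{\dH r}$, giving the result. The main obstacle, as noted, is justifying the approximation bound at non-even Sobolev indices; I expect to dispatch it by the interpolation argument sketched above, which is where I would spend the care in the full write-up.
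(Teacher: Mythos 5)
Your proposal is correct and takes essentially the same route as the paper: a duality argument with $w=A^{-1}\psi$, Galerkin orthogonality of $R_h\phi-\phi$ to insert a finite element approximant of $w$ (the paper subtracts $I_hw$ and invokes \eqref{eqn:int-err}, you subtract $R_hw$ and invoke \eqref{Error-solution-f}, which makes no difference), Cauchy--Schwarz, and the $O(h^{r+1})$ $H^1$-approximation bound combined with $\|w\|_{\dH{r+2}}=\|\psi\|_{\dH{r}}$. Only note that your parenthetical remark calling the intermediate $h^{r}$ bound ``slightly stronger'' than $h^{r+1}$ is backwards ($h^{r}$ is the weaker bound for small $h$), but your final displayed chain is the correct one and coincides with the paper's proof.
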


\begin{proof}
For any $\psi\in \dH r$ we let $w = A^{-1} \psi \in \dH{r+2}$. By the duality argument, we have
 \begin{align*}
  \| R_h \phi - \phi \|_{\dH{-r}} &= \sup_{\psi\in \dH r} \frac{\langle R_h \phi - \phi,\psi \rangle}{\|  \psi  \|_{\dH r}}
  = \sup_{\psi\in \dH r} \frac{  (\nabla(R_h \phi - \phi),\nabla w)}{\|  \psi  \|_{\dH r}}\\
  &=\sup_{\psi\in \dH r} \frac{ (\nabla (R_h \phi - \phi),\nabla(w-I_h w))}{\|  \psi  \|_{\dH r}}.
\end{align*}
Then using \eqref{eqn:int-err} we derive
 \begin{align*}
 | (\nabla (R_h \phi - \phi),\nabla(w-I_h w)) |& \le \|  R_h \phi - \phi \|_{\dH 1} \|  w-I_h w \|_{\dH 1}  \\
 &\le   ch^{r+1}\| R_h \phi - \phi\|_{\dH{1}}\|  w \|_{\dH {r+2}}\\
 &= ch^{r+1}\| R_h \phi - \phi\|_{\dH{1}} \|  \psi \|_{\dH r}.
\end{align*}
Then the desired result follows immediately.
\end{proof}

\subsection{High-order approximation to the regular part $u^r(t)$}\label{section:regular}

In order to approximate the regular part $u^r(t)$ in \eqref{eqn:split-1},
we consider the following contour integral
\begin{align}\label{eqn:semi-reg}
u_h^r(t) =  F_h^r(t) P_h u^0 := \frac{(-1)^m}{2\pi\rm{i}} \int_{\Gamma_{\theta,\kappa}} e^{zt} z^{(1+m)\alpha-1} (z^\alpha+A_h)^{-1}A_h^{-m} P_hu^0\,\d z.
\end{align}
We shall establish an error estimate for $  u_h^r -  u^r$ by using the following technical lemma.
\begin{lemma}\label{lem:keylem}
The following estimate holds for $v\in H_0^1(\Omega)$ and $z\in \Sigma_{\theta}$ with $\theta\in(\frac\pi2,\pi)$:
\begin{equation}
    |z^\alpha| \|v\|_{L^2(\Omega)}^2 + \| \nabla v \|_{L^2(\Omega)}^2
    \le c\big|z^\alpha\|v\|_{L^2(\Omega)}^2 + (\nabla v,\nabla v)\big|.
\end{equation}
\end{lemma}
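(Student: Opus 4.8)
The statement is an elementary inequality comparing the quantity
$|z^\alpha|\,\|v\|_{L^2}^2 + \|\nabla v\|_{L^2}^2$ with the modulus of the complex number
$z^\alpha\|v\|_{L^2}^2 + (\nabla v,\nabla v)$. Writing $a = \|v\|_{L^2}^2 \ge 0$ and $b = (\nabla v,\nabla v) = \|\nabla v\|_{L^2}^2 \ge 0$, the right-hand side is $|z^\alpha a + b|$ and the left-hand side is $|z^\alpha| a + b$. So the claim reduces to a purely scalar fact: for $a,b \ge 0$ and $w = z^\alpha$ lying in the sector $\Sigma_{\alpha\theta}$, one has $|w| a + b \le c\,|wa + b|$ with a constant $c$ depending only on $\alpha$ and $\theta$.

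The plan is to prove this scalar inequality. First I would record that, since $\theta \in (\frac\pi2,\pi)$ and $\alpha \in (0,1)$, the argument of $w = z^\alpha$ satisfies $|\arg w| \le \alpha\theta < \pi$ (and in fact this is exactly the geometric condition the paper fixed $\theta$ to guarantee, just below \eqref{eqn:sol-2}). The key point is that $w$ stays a fixed angle away from the negative real axis: $|\arg w| \le \alpha\theta =: \psi_0 < \pi$. Now estimate $|wa + b|$ from below. Since $b \ge 0$ is a nonnegative real number, adding it to $wa$ cannot move the sum "behind" $wa$ in a bad way when $\operatorname{Re} w \ge 0$; and when $\operatorname{Re} w < 0$ one uses the imaginary part. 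Concretely, I would split into two cases according to the sign of $\operatorname{Re} w$. If $\operatorname{Re} w \ge 0$, then $\operatorname{Re}(wa+b) = a\operatorname{Re} w + b \ge b \ge 0$ and also $|wa+b| \ge |\operatorname{Im}(wa+b)| = a|\operatorname{Im} w|$; combining, $|wa+b| \ge \frac12(a\operatorname{Re} w + b + a|\operatorname{Im} w|) \ge \frac12(\tfrac{1}{\sqrt2}|w|a + b)$ roughly — one gets $|wa+b| \ge c(|w|a + b)$. If $\operatorname{Re} w < 0$, then $|\arg w|$ is strictly between $\frac\pi2$ and $\psi_0 < \pi$, so $|\operatorname{Im} w| \ge |w|\sin(\pi - \psi_0) = |w|\sin\psi_0 > 0$; hence $|wa + b| \ge |\operatorname{Im}(wa+b)| = a|\operatorname{Im} w| \ge a|w|\sin\psi_0$, and separately $|wa+b| \ge b \cdot$ (something) is not immediate, so instead I would also note $|wa+b| \ge |\operatorname{Re}(wa+b)|$ is not useful here; better: use $|wa+b| \ge \max(a|w|\sin\psi_0, \, ?)$. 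To recover the $b$ term one can argue: either $b \le 2a|w|$, in which case $|w|a + b \le 3|w|a \le \frac{3}{\sin\psi_0}|wa+b|$; or $b > 2a|w|$, in which case $|wa+b| \ge b - |w|a \ge b/2 \ge \frac14(|w|a+b)$. Either way $|wa+b| \ge c_\psi(|w|a+b)$ with $c_\psi$ depending only on $\psi_0 = \alpha\theta$. Taking $c = 1/c_\psi$ finishes it.

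The main (and only) obstacle is bookkeeping the case $\operatorname{Re} w < 0$ cleanly so as to simultaneously control both $a|w|$ and $b$; the sub-case split "$b$ small vs.\ $b$ large relative to $a|w|$" handles this and is the crux. Everything else — translating the vector statement into the scalar one via $a = \|v\|_{L^2}^2$, $b = \|\nabla v\|_{L^2}^2$, and invoking $|\arg z^\alpha| \le \alpha\theta < \pi$ — is immediate. Note the constant $c$ genuinely blows up as $\alpha\theta \to \pi$, i.e.\ as $\sin\psi_0 \to 0$, which is consistent with the hypothesis $\theta \in (\frac\pi2,\pi)$ being used only to keep $z^\alpha$ away from $\mathbb{R}_-$. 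I would present it as a two-case (then within the second case, two-subcase) elementary argument, roughly half a page.
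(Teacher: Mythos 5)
Your proposal is correct and follows essentially the same route as the paper: both reduce the claim to the elementary scalar fact that $|wa+b|\ge c\,(|w|a+b)$ for nonnegative $a,b$ and $w$ in a sector of half-angle strictly less than $\pi$, proved by a case analysis on $\arg w$ (the paper splits at $\arg z \lessgtr \pi-\theta$ and uses the two lower bounds $|z\gamma+\beta|\ge |z|\gamma\sin\varphi$ and $|z\gamma+\beta|\ge\beta\sin\varphi$, while you split on the sign of $\operatorname{Re}w$ with a further sub-case on the relative size of $b$ and $a|w|$). The differences are only cosmetic, so no further comparison is needed.
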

\begin{proof}
By \cite[Lemma 7.1]{FujitaSuzuki:1991}, we have that for any $z\in \Sigma_{\theta}$
\begin{equation*}
    |z| \|v\|_{L^2(\Omega)}^2 + \| \nabla v\|_{L^2(\Omega)}^2 \le c\big|z\|v\|_{L^2(\Omega)}^2 + (\nabla v,\nabla v)\big|.
\end{equation*}
Alternatively, let $\gamma=\|v\|_{L^2(\Omega)}^2$ and $\beta=\|\nabla v\|_{L^2(\Omega)}^2
=(\nabla v,\nabla v)$ and $\arg(z) = \fy$, we have
\begin{equation*}
  |  z \gamma + \beta  |^2  \ge   (|z| \gamma \cos\fy + \beta)^2 +  (|z| \gamma \sin \fy)^2.
\end{equation*}
Therefore, we derive
\begin{equation*}
  |  z \gamma + \beta  |   \ge   |z| \gamma \sin \fy
\quad\mbox{and}\quad
  |  z \gamma + \beta  | ^2   \ge   (\beta\cos\fy + |z| \gamma)^2 + \beta^2\sin^2\fy \ge \beta^2\sin^2\fy.
\end{equation*}
Then for $\fy\in[\pi-\theta, \theta]$, we have
\begin{equation*}
2  |  z \gamma + \beta  |   \ge  ( |z| \gamma + \beta) \sin \fy \ge  ( |z| \gamma + \beta) \sin \theta.
\end{equation*}
Meanwhile, for $\fy\in[0,\pi-\theta]$, we have $\cos\fy \ge \cos(\pi-\theta) > 0$.
\begin{equation*}
 |  z \gamma + \beta  |  \ge   |z| \gamma \cos\fy + \beta \ge   |z| \gamma \cos(\pi-\theta) + \beta \ge  ( |z| \gamma + \beta)\cos(\pi-\theta).
 \end{equation*}
This completes the proof of the lemma.
\end{proof}

Let $w = (z^{\alpha} + A)^{-1} A^{-m} v$.
Appealing again to Lemma \ref{lem:keylem}, we obtain
\begin{equation*}
    |z^\alpha| \| A^m w \|_{L^2\II}^2 + \|\nabla A^m w\|_{L^2\II}^2
    \le c|((z^\alpha+A)A^m w,A^m w)|\le c\|   v \|_{L^2\II}\|A^m  w\|_{L^2\II}.
\end{equation*}
Consequently
\begin{equation}\label{eqn:wbound2}
    \|A^m w \|_{L^2\II} \le c|z^\alpha|^{-1}\| v \|_{L^2\II}\quad\mbox{and}\quad
    \|\nabla A^m w \|_{L^2\II} \le c|z^\alpha|^{-\frac12}\| v \|_{L^2\II}.
\end{equation}
In view of \eqref{eqn:wbound2} and the resolvent estimate \eqref{eqn:resol}, we can bound $\| w \|_{\dH 2}$ by
\begin{equation}\label{eqn:wbound3}
\begin{split}
   \|  A^m w \|_{\dot H^{2}\II} &=  \| A^{m+1} w \|_{L^2\II}
   =  \| (-z^\alpha +z^\alpha +A)(z^\alpha+A)^{-1}v \|_{L^2\II}\\
   &\le c(\| v \|_{L^2\II}+|z^\alpha|\|A^m w\|_{L^2\II})\le c\| v\|_{L^2\II}.
\end{split}
\end{equation}


Next, we aim to develop an error estimate between $(z^\alpha +A)^{-1}A^{-m}u^0$
and its discrete analogue $(z^\alpha+A_h)^{-1}A_h^{-m}P_h u^0$
for $u^0\in L^2\II$. We begin with the following technical lemma.

\begin{lemma}\label{lem:wsigma}
Let $ u^0\in L^2(\Omega) $ and we define $\{ p_j \}_{j=1}^m$ such that
$$ p_1 =  A^{-1}u^0\quad \text{and}\quad p_{j} = A^{-1}p_{j-1}\quad \text{with}~~ j =1,2,\ldots,m.  $$
Moreover, we define $\{ p_{j,h} \}_{j=1}^m \subset X_h$  such that
$$ p_{1,h} =  A_h^{-1} P_h u^0\quad \text{and}\quad p_{j,h} =A_h^{-1} p_{j-1,h}\quad \text{with}~~ j =1,2,\ldots,m.  $$
Then there hold error estimates for $j=1,2\ldots,m$
\begin{equation}\label{eqn:wsigma}
    \|   p_{j,h} - p_{j} \|_{L^2(\Omega)} + h\| \nabla (p_{j,h}-p_{j})\|_{L^2(\Omega)}
    \le ch^{2j} \| u^0  \|_{L^2(\Omega)}
\end{equation}
and
\begin{equation}\label{eqn:wsigma-2}
    \|   p_{j,h} - p_{j} \|_{H^{-s}\II}  \le ch^{2j+s} \| u^0  \|_{L^2(\Omega)}\quad \text{with} ~~ 1 \le s \le 2m-2j+2.
\end{equation}
\end{lemma}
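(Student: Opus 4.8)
\textbf{Proof plan.} The two estimates \eqref{eqn:wsigma} and \eqref{eqn:wsigma-2} are naturally coupled, and the plan is to prove them together by induction on $j$: the negative-norm bounds at level $j-1$ supply exactly the extra powers of $h$ needed to advance the $L^2$/$H^1$ estimate from level $j-1$ to level $j$. The only structural ingredient is the standard operator identity $A_h^{-1}P_h=R_hA^{-1}$ on $L^2(\Omega)$ (immediate from the definitions of $A_h$, $P_h$, $R_h$), combined with the observation that $p_{j-1,h}\in X_h$, so $P_hp_{j-1,h}=p_{j-1,h}$. Together with $A^{-1}p_{j-1}=p_j$ these give, for $j\ge 2$, the error splitting
\begin{equation*}
  p_{j,h}-p_{j}=R_hA^{-1}(p_{j-1,h}-p_{j-1})+\big(R_hp_{j}-p_{j}\big),
\end{equation*}
which is the workhorse of the induction.

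For the base case $j=1$ we have $p_{1,h}-p_1=R_hp_1-p_1$ with $p_1=A^{-1}u^0\in\dH2$, so \eqref{eqn:wsigma} follows from the Ritz estimate \eqref{Error-solution-f} with $r=0$, and \eqref{eqn:wsigma-2} follows by combining Lemma \ref{eqn:err-ellip-1} with that same $H^1$-bound on $R_hp_1-p_1$. In the inductive step, the term $R_hp_j-p_j$ is a pure Ritz projection error of $p_j=A^{-j}u^0\in\dH{2j}$ with $2j\le 2m$, handled directly by \eqref{Error-solution-f} (and, in negative norms, by Lemma \ref{eqn:err-ellip-1}). For the term $R_hA^{-1}(p_{j-1,h}-p_{j-1})$, set $g=A^{-1}(p_{j-1,h}-p_{j-1})\in\dH2$; then the $H^1$-stability of $R_h$, the estimate $\|R_hg-g\|_{\dH1}\le ch\|g\|_{\dH2}$, and the isometry relations $\|A^{-1}w\|_{\dH{-s}}=\|w\|_{\dH{-s-2}}$, $\|A^{-1}w\|_{\dH1}=\|w\|_{\dH{-1}}$ reduce everything to the induction hypotheses at level $j-1$: the $L^2$-part of \eqref{eqn:wsigma} at level $j$ uses $\|p_{j-1,h}-p_{j-1}\|_{\dH{-2}}$, the $H^1$-part uses $\|p_{j-1,h}-p_{j-1}\|_{\dH{-1}}$, and \eqref{eqn:wsigma-2} at level $j$ uses $\|p_{j-1,h}-p_{j-1}\|_{\dH{-s-2}}$. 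One then checks that invoking \eqref{eqn:wsigma-2} at level $j-1$ with exponent $s+2$ requires precisely $s\le 2m-2(j-1)+2$, which is the admissible range in the statement, and that Lemma \ref{eqn:err-ellip-1} is only ever applied with exponents $\le 2m$; so the bookkeeping closes.

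I expect the main obstacle to be organizing this coupled induction correctly rather than any single estimate: a naive induction on \eqref{eqn:wsigma} alone loses a factor $h^2$ at each step, because $R_hA^{-1}$ is merely bounded — not smoothing — on $L^2(\Omega)$. The remedy is to carry the sharper negative-norm information \eqref{eqn:wsigma-2} along in the induction and convert it, via $\|A^{-1}w\|_{L^2}=\|w\|_{\dH{-2}}$, into the missing powers of $h$. Once the splitting above is in place, each individual bound is a routine application of \eqref{eqn:int-err}--\eqref{Error-solution-f}, Lemma \ref{eqn:err-ellip-1}, and the $H^1$-stability of the Ritz projection.
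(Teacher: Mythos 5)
Your proposal is correct, and it reaches the result by a somewhat different mechanism than the paper, although both arguments share the same skeleton: an induction on $j$ in which the negative-norm estimates \eqref{eqn:wsigma-2} at level $j-1$ are carried along to recover the extra powers of $h$ at level $j$. The paper never writes down your explicit recursion $p_{j,h}-p_j=R_hA^{-1}(p_{j-1,h}-p_{j-1})+(R_hp_j-p_j)$; instead it works from the variational error equation $(\nabla\sigma_{k+1},\nabla\fy_h)=(\sigma_k,\fy_h)$, first deriving the $\dH{1}$ bound by an energy identity (testing against $R_hp_{k+1}-p_{k+1,h}$ and splitting off Ritz errors) and then obtaining the $H^{-r}\II$ bounds, $0\le r\le 2m-2k$, by a duality argument with $\phi=A^{-1}\fy$, using Lemma \ref{eqn:err-ellip-1} and \eqref{Error-solution-f} on the dual side. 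Your identity $A_h^{-1}P_h=R_hA^{-1}$ together with the exact spectral isometries $\|A^{-1}w\|_{\dH{-s}}=\|w\|_{\dH{-s-2}}$, $\|A^{-1}w\|_{\dH{1}}=\|w\|_{\dH{-1}}$ replaces both the energy step and the duality suprema, which makes the bookkeeping shorter and more transparent; what the paper's more pedestrian energy-plus-duality route buys is that it is exactly the template reused in Lemma \ref{lem:wbound}, where the operator is $(z^\alpha+A_h)^{-1}$ and no analogue of $A_h^{-1}P_h=R_hA^{-1}$ is available. One cosmetic slip in your write-up: invoking \eqref{eqn:wsigma-2} at level $j-1$ with exponent $s+2$ requires $s+2\le 2m-2(j-1)+2$, i.e.\ $s\le 2m-2j+2$ (not $s\le 2m-2(j-1)+2$ as written); since this is precisely the admissible range claimed at level $j$, the induction closes and the argument is sound.
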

\begin{proof}
Let $\sigma_j = p_{j} - p_{j,h}$.
By the definition, we have $p_{1,h} = R_h p_1$ and hence derive
\begin{align*}
  \|\sigma_1 \|_{L^2(\Omega)}+h\|\nabla \sigma_1\|_{L^2(\Omega)}& \leq ch^2\| u^0 \|_{L^2(\Omega)}.
\end{align*}
This and the negative norm estimate in Lemma \ref{eqn:err-ellip-1} lead to
\begin{align*}
  \|\sigma_1\|_{H^{-r}(\Omega)}  \le ch^{r+1}\|\sigma_1\|_{H^{1}(\Omega)} \le ch^{r+2} \|  u^0 \|_{L^2(\Omega)} ~~\text{with}~~  1\le r \le 2m.
\end{align*}
Next, we prove \eqref{eqn:wsigma} and \eqref{eqn:wsigma-2} by mathematical induction.
Assume that \eqref{eqn:wsigma} and \eqref{eqn:wsigma-2} holds for $j=k$. Then
Moreover, $p_j$ and $p_{j,h}$ respectively satisfy
\begin{equation*}
  \begin{aligned}
     (\nabla p_{k+1},\nabla \fy)&=(p_k,\fy),\quad \forall \fy \in H_0^1(\Omega),\\
     (\nabla p_{k+1,h},\nabla \fy_h)&=(p_{k,h},\fy_h),\quad \forall \fy_h\in X_h.
  \end{aligned}
\end{equation*}
Letting $\fy=\fy_h$ and subtracting these two identities yield the following error equation
\begin{equation}\label{eqn:sig-eq}
    (\nabla \sigma_{k+1}, \nabla \fy_h) = (\sigma_k,\fy_h), \quad \forall \fy_h\in X_h.
\end{equation}
Therefore, we have the following estimate
\begin{equation*}
\begin{split}
    \| \nabla \sigma_{k+1} \|_{L^2\II}^2
    &= (\nabla \sigma_{k+1}, \nabla (p_{k+1} - R_h p_{k+1})) + (\sigma_k, R_h p_{k+1} - p_{k+1}) + (\sigma_k, \sigma_{k+1})\\
    &\le  \|\nabla \sigma_{k+1}\|_{L^2\II} \|\nabla (p_{k+1} - R_h p_{k+1})\|_{L^2\II} \\
   &\quad + \|\nabla\sigma_k\|_{L^2\II} \| p_{k+1} - R_h p_{k+1}\|_{H^{-1}\II} + \|\sigma_k\|_{H^{-1}\II} \|\nabla\sigma_{k+1}\|_{L^2\II}. \\
\end{split}
\end{equation*}
According to \eqref{eqn:wsigma}, \eqref{eqn:wsigma-2} with $j=k$ and $s=-1$, \eqref{Error-solution-f} and Lemma \ref{eqn:err-ellip-1}, we derive
\begin{equation}\label{eqn:dsigma}
\begin{split}
   &\quad \| \nabla \sigma_{k+1} \|_{L^2\II}^2\\
    &\le c  \|\nabla (I-R_h)p_{k+1}\|_{L^2\II}^2
    + \|\nabla\sigma_k\|_{L^2\II} \| (I-R_h)p_{k+1}\|_{H^{-1}\II} + c\|\sigma_k\|_{H^{-1}\II}^2 \\
    &\le c h^{4k+2} \| u^0 \|_{L^2\II}^2
\end{split}
\end{equation}
Next, we show the error estimate in $H^{-r}\II$ with $ 0 \le r \le 2m-2k$ by duality argument.
For any $\fy \in \dH r$ we let $\phi = A^{-1} \fy$. Then we observe
\begin{equation*}
\begin{split}
    \|  \sigma_{k+1} \|_{H^{-r}\II}
   & = \sup_{\fy\in \dH r}\frac{\langle \sigma_{k+1}, \fy\rangle}{\| \fy \|_{\dH r}}
    = \sup_{\fy\in \dH r}\frac{( \nabla \sigma_{k+1}, \nabla \phi)}{\| \fy \|_{\dH r}} \\
   & = \sup_{\fy\in \dH r}\frac{( \nabla \sigma_{k+1}, \nabla (\phi - R_h \phi)) + ( \sigma_{k},   R_h \phi - \phi) + ( \sigma_{k},   \phi)}{\| \fy \|_{\dH r}}\\
\end{split}
\end{equation*}
Using \eqref{eqn:dsigma} and \eqref{Error-solution-f}, we derive
\begin{equation*}
\begin{split}
   \sup_{\fy\in \dH r}\frac{( \nabla \sigma_{k+1}, \nabla (\phi - R_h \phi)) }{\| \fy \|_{\dH r}}
   &\le \sup_{\fy\in \dH r}\frac{\|\nabla \sigma_{k+1}\|_{L^2\II} \|\nabla (I - R_h) \phi \|_{L^2\II}}{\| \fy \|_{\dH r}} \\
   &\le \sup_{\fy\in \dH r}\frac{ ch^{2k+1} \| u^0 \|_{L^2\II} c h^{r+1}\| \phi \|_{\dH{r+2}}}{\| \fy \|_{\dH r}}  \le c h^{2k+2+r}.
\end{split}
\end{equation*}
Meanwhile, by duality between $\dH{-1}$ and $\dH1$, we apply \eqref{Error-solution-f} and \eqref{eqn:wsigma} with $j=k$ and $s=1$ to derive
\begin{equation*}
\begin{split}
   \sup_{\fy\in \dH r}\frac{( \sigma_{k},   R_h \phi - \phi)}{\| \fy \|_{\dH r}}
   &\le \sup_{\fy\in \dH r}\frac{ \|\sigma_k\|_{\dH{-1}} \| (I - R_h) \phi \|_{\dH1}}{\| \fy \|_{\dH r}} \\
   &\le \sup_{\fy\in \dH r}\frac{ ch^{2k+1} \| u^0 \|_{L^2\II} c h^{r+1}\| \phi \|_{\dH{r+2}}}{\| \fy \|_{\dH r}}  \le c h^{2k+2+r}.
\end{split}
\end{equation*}
Similarly, by means of the duality between $\dH{-2-r}$ and $\dH{2+r}$, we apply again \eqref{eqn:wsigma} with $j=k$ and $s=2+r$ to derive
\begin{equation*}
\begin{split}
   \sup_{\fy\in \dH r}\frac{( \sigma_{k},   \phi)}{\| \fy \|_{\dH r}}
   &\le \sup_{\fy\in \dH r}\frac{ \|\sigma_k\|_{\dH{-r-2}} \|   \phi \|_{\dH{2+r}}}{\| \fy \|_{\dH r}} \\
   &\le \sup_{\fy\in \dH r}\frac{ ch^{2k+r+2} \| u^0 \|_{L^2\II}\| \fy \|_{\dH r}}{\| \fy \|_{\dH r}}  \le c h^{2k+2+r}.
\end{split}
\end{equation*}
This completes the proof of the lemma.
\end{proof}

\begin{lemma}\label{lem:wbound}
Let $ u^0\in L^2(\Omega) $,  $z\in \Sigma_{\theta}$,
 $w=(z^\alpha+A)^{-1}p$ with $p=A^{-m}u^0$, and $w_h=(z^\alpha+A_h)^{-1} p_h$ with $p_h = A_h^{-m} P_h u^0$.
 Then there holds
\begin{equation}\label{eqn:wboundHa}
    \|  w_h-w \|_{L^2(\Omega)} + h\| \nabla (w_h-w)\|_{L^2(\Omega)}
    \le ch^{2m+2} \| u^0 \|_{L^2(\Omega)}.
\end{equation}
\end{lemma}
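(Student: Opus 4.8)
The statement concerns the error between $w=(z^\alpha+A)^{-1}A^{-m}u^0$ and its fully discrete analogue $w_h=(z^\alpha+A_h)^{-1}A_h^{-m}P_hu^0$. The natural strategy is to interpose the ``semidiscrete resolvent applied to the exact iterated inverse'', so I would write
\begin{equation*}
w_h - w = \big[(z^\alpha+A_h)^{-1}p_h - (z^\alpha+A_h)^{-1}P_h p\big] + \big[(z^\alpha+A_h)^{-1}P_h p - (z^\alpha+A)^{-1}p\big]
=: \mathrm{I} + \mathrm{II},
\end{equation*}
where $p=A^{-m}u^0$ and $p_h=A_h^{-m}P_hu^0=p_{m,h}$ in the notation of Lemma~\ref{lem:wsigma}. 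For term $\mathrm{I}$ we use that $(z^\alpha+A_h)^{-1}$ is uniformly bounded on $X_h$ in the $L^2$ norm (by the discrete analogue of the resolvent estimate \eqref{eqn:resol}, which follows from Lemma~\ref{lem:keylem} applied to $A_h$), so $\|\mathrm{I}\|_{L^2}\le c|z^\alpha|^{-1}\|p_h-P_hp\|_{L^2}$; since $\|p_h-P_hp\|_{L^2}\le\|p_{m,h}-p_m\|_{L^2}+\|(I-P_h)p_m\|_{L^2}\le ch^{2m}\|u^0\|_{L^2}$ by \eqref{eqn:wsigma} with $j=m$ and standard properties of $P_h$, this contributes $ch^{2m}\|u^0\|_{L^2}$ — which is only order $2m$, not $2m+2$, so this crude bound is not yet good enough and must be sharpened (see below).

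**Term $\mathrm{II}$.** This is the ``resolvent error'' for smooth right-hand side $p\in\dH{2m}$. I would estimate it by the standard Thomée-type argument for resolvent equations: let $e:=(z^\alpha+A_h)^{-1}P_hp-R_h w$ (or $-R_hw$ with $w=(z^\alpha+A)^{-1}p$), derive the discrete equation $z^\alpha(e,\chi)+(\nabla e,\nabla\chi)=z^\alpha((R_hw-w),\chi)$ for $\chi\in X_h$ using the defining property of $R_h$, test with $\chi=e$, invoke Lemma~\ref{lem:keylem} for the lower bound on the left side, and obtain $|z^\alpha|\|e\|_{L^2}^2+\|\nabla e\|_{L^2}^2\le c|z^\alpha|\,\|R_hw-w\|_{L^2}\|e\|_{L^2}$, hence $\|e\|_{L^2}\le c\|(I-R_h)w\|_{L^2}$. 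Then $\|\mathrm{II}\|_{L^2}\le\|e\|_{L^2}+\|(I-R_h)w\|_{L^2}\le c\|(I-R_h)w\|_{L^2}\le ch^{2m+2}\|w\|_{\dH{2m+2}}$ by \eqref{Error-solution-f} (with $r=m$), and \eqref{eqn:wbound3} (and its iterates) gives $\|w\|_{\dH{2m+2}}\le c\|u^0\|_{L^2}$; the $H^1$ bound is obtained in the same energy estimate. So term $\mathrm{II}$ is under control.

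**The main obstacle and how to fix term $\mathrm{I}$.** The difficulty is that the naive $L^2\to L^2$ resolvent bound loses $h^2$: we have $\|p_h-P_hp\|_{L^2}\sim h^{2m}$, not $h^{2m+2}$. The remedy is to exploit that $(z^\alpha+A_h)^{-1}$ applied to a quantity that is ``small in a negative norm'' gains back powers of $h$ via an inverse inequality, or — cleaner — to move one factor of $A_h^{-1}$ out of the iterated inverse and use the sharper negative-norm estimate \eqref{eqn:wsigma-2}. Concretely, I would decompose $p_h-P_hp$ not at level $m$ but use the full telescoping $p_{m,h}-p_m=\sum$ and feed the resolvent a term of the form $A_h^{-1}(\text{something})$; then writing $(z^\alpha+A_h)^{-1}A_h^{-1}g_h$ and using $A_h^{-1}=$ discrete solution operator together with duality ($\dH{-1}\leftrightarrow\dH1$ on $X_h$) converts an $h^{2m}$ bound in $L^2$ into an $h^{2m+1}$ bound after one inverse, and the resolvent's smoothing $|z^\alpha|^{-1}$ together with one more application of Lemma~\ref{lem:keylem} recovers the remaining power. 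More directly, one can compare $w_h$ with $(z^\alpha+A_h)^{-1}P_hp$ and note $(z^\alpha+A_h)^{-1}(p_h-P_hp)=(z^\alpha+A_h)^{-1}A_h^{-1}\big(A_h p_h-A_h P_h p\big)$; iterating, $p_h-P_hp=A_h^{-m}P_hu^0-P_hA^{-m}u^0$ telescopes into $m$ terms each of which, after the resolvent and using \eqref{eqn:wsigma-2} with the appropriate $s$, contributes $h^{2m+2}\|u^0\|_{L^2}$. Thus the technical heart is the bookkeeping that matches each telescoped factor $A_h^{-1}$ against one unit of negative-norm regularity from \eqref{eqn:wsigma-2}, so that every piece lands at order $2m+2$; the resolvent estimates and Lemma~\ref{lem:keylem} then close the $L^2$ and $H^1$ bounds uniformly in $z\in\Sigma_\theta$.
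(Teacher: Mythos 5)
Your treatment of the second term $\mathrm{II}=(z^\alpha+A_h)^{-1}P_hp-(z^\alpha+A)^{-1}p$ is sound and essentially reproduces the smooth-data resolvent argument, but the first term $\mathrm{I}=(z^\alpha+A_h)^{-1}(p_h-P_hp)$ — which you yourself identify as the crux — is left unproved, and the mechanisms you invoke for it do not work as stated. The quantity $p_h-P_hp$ is only $O(h^{2m})$ in $L^2$ and $O(h^{2m+1})$, $O(h^{2m+2})$ in $H^{-1}$, $H^{-2}$ (by \eqref{eqn:wsigma}--\eqref{eqn:wsigma-2}), so the whole lemma hinges on converting negative-norm smallness of the data into an $L^2$ bound on the output of the discrete resolvent, \emph{uniformly in $z\in\Sigma_\theta$}. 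The ``resolvent smoothing $|z^\alpha|^{-1}$'' cannot supply the missing two powers of $h$ uniformly, since $|z^\alpha|^{-1}$ is unbounded as $|z|\to0$ on $\Sigma_\theta$ (an energy estimate with Lemma \ref{lem:keylem} and the $H^{-1}$ bound gives only $\|\mathrm{I}\|_{L^2}\le c|z^\alpha|^{-1/2}h^{2m+1}$); an inverse inequality loses rather than gains powers of $h$; and the proposed telescoping through $(z^\alpha+A_h)^{-1}A_h^{-1}(A_hp_h-A_hP_hp)$ involves $A_hP_hp$, which is not controllable at the needed order because $P_hp\neq R_hp$ (only $A_hR_h=P_hA$ is available, and $\|A_h(P_h-R_h)p\|_{L^2}$ costs an inverse estimate of order $h^{2m-2}$). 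So the ``bookkeeping'' you defer is precisely the missing proof, not a routine verification. A repair along your lines is possible — e.g.\ use $(z^\alpha+A_h)^{-1}=A_h^{-1}-z^\alpha(z^\alpha+A_h)^{-1}A_h^{-1}$ (the discrete version of \eqref{identity}) to reduce $\mathrm{I}$ to $\|A_h^{-1}(p_h-P_hp)\|_{L^2}$, note $A_h^{-1}P_hp=R_hA^{-1}p$, and extend the induction of Lemma \ref{lem:wsigma} to $j=m+1$ — but none of this is carried out in your proposal.

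For comparison, the paper does not split at all: it writes a single error equation $z^\alpha(e,\fy_h)+(\nabla e,\nabla\fy_h)=(\sigma,\fy_h)$ for $e=w-w_h$ with $\sigma=p-p_h$, performs one energy estimate via Lemma \ref{lem:keylem} using $R_hw$ as comparison function (which, with \eqref{eqn:wbound2}--\eqref{eqn:wbound3} and $\|\sigma\|_{H^{-1}(\Omega)}\le ch^{2m+1}\|u^0\|_{L^2(\Omega)}$, yields the $z$-uniform bound $|z^\alpha|\|e\|_{L^2(\Omega)}^2+\|\nabla e\|_{L^2(\Omega)}^2\le ch^{4m+2}\|u^0\|_{L^2(\Omega)}^2$ and hence the $H^1$ estimate), and then obtains the $L^2$ estimate by a duality argument with the continuous dual solution $\psi=(z^\alpha+A)^{-1}\fy$, splitting $(\sigma,R_h\psi)=(\sigma,R_h\psi-\psi)+(\sigma,\psi)$ and using $\|\sigma\|_{H^{-1}(\Omega)}$ and $\|\sigma\|_{H^{-2}(\Omega)}\le ch^{2m+2}\|u^0\|_{L^2(\Omega)}$ together with \eqref{Error-solution-f}. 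That duality step is exactly the ingredient your argument lacks for term $\mathrm{I}$.
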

\begin{proof}
Let $e=w-w_h$ and $\sigma = p - p_h$.
Then Lemma \ref{lem:wsigma} implies the estimate
\begin{align}\label{eqn:err-sigma}
  \|\sigma\|_{L^2(\Omega)}+h\|\nabla \sigma\|_{L^2(\Omega)}& \leq ch^{2m}\|u^0\|_{L^2(\Omega)}.
 \end{align}
and the negative norm error estimate
\begin{align}\label{eqn:err-sigma-1}
  \|\sigma\|_{H^{-r}(\Omega)}    \le  ch^{r+2m}\|u^0\|_{L^2(\Omega)},~~\text{with}~~ 1\le r\le 2.
 \end{align}
Moreover, $w$ and $w_h$ respectively satisfy
\begin{equation*}
  \begin{aligned}
    z^\alpha(w,\fy )+(\nabla w,\nabla \fy)&=(p,\fy),\quad \forall \fy \in H_0^1(\Omega),\\
    z^\alpha(w_h,\fy_h)+(\nabla w_h,\nabla \fy_h)&=(p_h,\fy_h),\quad \forall \fy_h\in X_h.
  \end{aligned}
\end{equation*}
Subtracting these two identities yields the following error equation of $e$
\begin{equation}\label{eqn:worthog}
    z^\alpha(e,\fy_h) + (\nabla e, \nabla \fy_h) = (\sigma,\fy_h), \quad \forall \fy_h\in X_h.
\end{equation}
This and Lemma \ref{lem:keylem} imply that
\begin{equation*}
    \begin{split}
        |z^\alpha| \| e\|_{L^2(\Omega)}^2  + \| \nabla e \|_{L^2(\Omega)}^2
        & \le c \big|z^\alpha\| e \|_{L^2(\Omega)}^2 + (\nabla e, \nabla e)\big| \\
        & = c \left|z^\alpha(e,w-R_h w) + (\nabla e, \nabla(w-R_h w)) - (\sigma, w_h - R_h w) \right|
     \end{split}
\end{equation*}
By  using the Cauchy-Schwartz inequality and the duality between $\dH 1$ and $\dH{-1}$, we arrive at
\begin{equation}\label{eqn:control2}
    \begin{aligned}
     |z^\alpha| \| e \|_{L^2\II}^2 + \| \nabla e \|_{L^2(\Omega)}^2
     &\le c \big(|z^\alpha| \|w-R_h w\|_{L^2\II}^2
          + \|\nabla(w-R_h w)\|_{L^2\II}^2 + \|\sigma\|_{H^{-1}\II}^2  \big).
     \end{aligned}
\end{equation}
According to \eqref{eqn:wbound2}, \eqref{eqn:wbound3} and \eqref{eqn:err-sigma-1}, we derive
\begin{equation}\label{eqn:control3}
 \begin{aligned}
  &\quad   |z^\alpha| \|e\|_{L^2\II}^2 + \|\nabla e\|_{L^2\II}^2\\
      &\le ch^{4m+2} \big(|z^\alpha| \|(z^\alpha+A)^{-1} u^0\|_{\dH1}^2
          + \|(z^\alpha+A)^{-1}u^0\|_{\dH2}^2 + \| u^0 \|_{L^2\II}^2  \big) \\
      &\le ch^{4m+2}\| u^0 \|_{L^2\II}^2.
 \end{aligned}
\end{equation}
This gives the desired bound on $\|\nabla e\|_{L^2\II}$. Next, we bound
$\|e\|_{L^2\II}$ using a duality argument. For any fixed $\fy\in L^2\II$, we set
$\psi=(z^\alpha+A)^{-1}\fy$. 
Then the preceding argument implies
\begin{equation}\label{eqn:control4}
     |z^\alpha| \|\psi-R_h \psi\|_{L^2\II}^2 + \|\nabla (\psi-R_h \psi)\|_{L^2\II}^2\le ch^2 \| \fy \|_{L^2\II}^2.
\end{equation}
we have by duality
\begin{equation*}
\|e \|_{L^2\II} = \sup_{\fy \in L^2\II}\frac{|(e,\fy)|}{\|\fy\|_{L^2\II}}
=\sup_{\fy \in L^2\II}\frac{|z^\alpha(e,\psi)+(\nabla e,\nabla \psi)|}{\|\fy\|_{L^2\II}}.
\end{equation*}
Then the desired estimate follows from \eqref{eqn:err-sigma-1}, \eqref{eqn:worthog}, \eqref{eqn:control3} and \eqref{eqn:control4} by
\begin{equation*}
    \begin{split}
      |z^\alpha(e,\psi)+(\nabla e,\nabla \psi)|
        & = |z^\alpha(e,\psi-R_h \psi)+(\nabla e,\nabla (\psi-R_h \psi))+(\sigma,R_h \psi)| \\
        & \le |z^\alpha(e,\psi-R_h \psi)+(\nabla e,\nabla (\psi-R_h \psi))|+ |(\sigma,R_h \psi-\psi)| + |(\sigma,\psi)| \\
        & \le |z^\alpha|^{\frac12}\|e\|_{L^2\II} |z^\alpha|^{\frac12}\| \psi-R_h \psi \|_{L^2\II}
         + \|\nabla e\|_{L^2\II}\| \nabla(\psi-R_h \psi) \|_{L^2\II}\\
         &\quad + \| \sigma \|_{H^{-1}\II} \| \nabla(R_h \psi-\psi)  \|_{L^2\II} + \| \sigma \|_{H^{-2}\II} \| \psi \|_{\dH2} \\
        & \le ch^{2m+2} \| u^0 \|_{L^2\II} \| \psi \|_{\dH2} \le ch^{2m+2} \| v \|_{L^2\II} \| \fy \|_{L^2\II}.
    \end{split}
\end{equation*}
This completes proof of the lemma.
\end{proof}

Now we can state error estimates for the regular part.
\begin{theorem}\label{thm:regular}
Let $u^r$ and $u_h^r$ be the functions defined by \eqref{eqn:split-2} and \eqref{eqn:semi-reg},
respectively. Then for $t>0$, there holds:
\begin{equation*}
  \| (u^r-u_h^r)(t)\|_{L^2\II} + h\| \nabla (u^r-u_h^r)(t)\|_{L^2\II}
  \le ch^{2m+2} t^{-(1+m)\alpha} \|u^0\|_{L^2\II}.
\end{equation*}
\end{theorem}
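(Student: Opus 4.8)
The plan is to reduce everything to the resolvent-level estimate already contained in Lemma~\ref{lem:wbound} and then perform a standard contour estimate. Subtracting \eqref{eqn:semi-reg} from the expression for $u^r$ in \eqref{eqn:split-3} yields
\[
(u^r-u_h^r)(t) = \frac{(-1)^m}{2\pi\mathrm{i}}\int_{\Gamma_{\theta,\kappa}} e^{zt}\, z^{(1+m)\alpha-1}\Big[(z^\alpha+A)^{-1}A^{-m}u^0 - (z^\alpha+A_h)^{-1}A_h^{-m}P_hu^0\Big]\,\d z .
\]
With the notation of Lemma~\ref{lem:wbound}, the bracketed term is precisely $w-w_h$, so that lemma supplies the pointwise-in-$z$ bounds $\|w-w_h\|_{L^2(\Omega)}\le ch^{2m+2}\|u^0\|_{L^2(\Omega)}$ and $\|\nabla(w-w_h)\|_{L^2(\Omega)}\le ch^{2m+1}\|u^0\|_{L^2(\Omega)}$, with constants independent of $z\in\Sigma_\theta$ (this uniformity is visible from the proof of that lemma, since the resolvent estimate \eqref{eqn:resol} and Lemma~\ref{lem:keylem} are invoked with $z$-independent constants).

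Next I would move the $L^2(\Omega)$-norm (resp. the $\dH1$-seminorm) inside the integral and bound it by $ch^{2m+2}\|u^0\|_{L^2(\Omega)}$ (resp. $ch^{2m+1}\|u^0\|_{L^2(\Omega)}$) times the scalar integral $\int_{\Gamma_{\theta,\kappa}}|e^{zt}|\,|z|^{(1+m)\alpha-1}\,|\d z|$. Since the integrand $e^{zt}z^{(1+m)\alpha-1}(z^\alpha+A)^{-1}A^{-m}$ and its discrete analogue are analytic on and to the right of $\Gamma_{\theta,\kappa}$ (recall $\theta\in(\tfrac\pi2,\pi)$ and both $A$, $A_h$ are positive), one may deform the contour and choose its radius $\kappa=1/t$. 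On the circular arc $|z|=1/t$ one has $|e^{zt}|\le e$ and arc length $O(1/t)$, contributing $O(t^{-(1+m)\alpha})$; on the two rays $z=\rho e^{\pm\mathrm{i}\theta}$, $\rho\ge 1/t$, the substitution $s=\rho t$ together with $\cos\theta<0$ gives $t^{-(1+m)\alpha}\int_1^\infty e^{s\cos\theta}s^{(1+m)\alpha-1}\,\d s$, a convergent integral. Hence $\int_{\Gamma_{\theta,\kappa}}|e^{zt}|\,|z|^{(1+m)\alpha-1}\,|\d z|\le c\,t^{-(1+m)\alpha}$.

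Combining the two steps gives $\|(u^r-u_h^r)(t)\|_{L^2(\Omega)}\le ch^{2m+2}t^{-(1+m)\alpha}\|u^0\|_{L^2(\Omega)}$ and $\|\nabla(u^r-u_h^r)(t)\|_{L^2(\Omega)}\le ch^{2m+1}t^{-(1+m)\alpha}\|u^0\|_{L^2(\Omega)}$; multiplying the latter by $h$ and adding yields the asserted bound. I do not expect a serious obstacle here: Lemma~\ref{lem:wbound} — where the genuine work lies, namely propagating the $O(h^2)$ gain per application of $A^{-1}$ through the duality/induction of Lemma~\ref{lem:wsigma} and controlling the $z^\alpha$-weighted coercivity via Lemma~\ref{lem:keylem} — already does the heavy lifting. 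The only points needing a little care are verifying that the constant in Lemma~\ref{lem:wbound} is genuinely uniform in $z\in\Sigma_\theta$ and justifying the contour deformation together with the choice $\kappa=1/t$, both of which are routine.
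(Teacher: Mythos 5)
Your proposal is correct and follows essentially the same route as the paper's proof: represent $(u^r-u_h^r)(t)$ as a contour integral of $e^{zt}z^{(1+m)\alpha-1}(w-w_h)$, invoke Lemma \ref{lem:wbound} uniformly in $z\in\Sigma_\theta$, and estimate the scalar contour integral with $\kappa=t^{-1}$ to obtain the factor $t^{-(1+m)\alpha}$, treating the gradient bound analogously. The extra details you supply (uniformity of the constant in $z$ and the justification of the choice $\kappa=1/t$) are exactly the points the paper leaves implicit, so there is no discrepancy.
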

\begin{proof}
 For $v\in L^2(\Omega)$, by the solution representations, the error $e_h(t)$ can be represented as
\begin{equation*}
|(u^r-u_h^r)(t)|=\Big|\frac1{2\pi\mathrm{i}}\int_{\Gamma_{\theta,\kappa}} e^{zt} z^{(1+m)\alpha-1} (w_h(z)-w(z)) \,\d z \Big|,
\end{equation*}
with $w(z)=(z^\alpha +A)^{-1}A^{-m}u^0$ and $w_h(z)=(z^\alpha+A_h)^{-1}A_h^{-m}P_hu^0$.
By Lemma \ref{lem:wbound}, and taking $\kappa=t^{-1}$
in the contour $\Gamma_{\theta,\kappa}$, we have
\begin{equation*}
\|  (u^r-u_h^r)(t) \|_{L^2\II}\le ch^{2m+2} \| u^0 \|_{L^2\II} \int_{\Gamma_{\theta,\kappa}} e^{\Re(z)t} |z|^{(1+m)\alpha-1} \,|\d z|
\le ch^{2m+2} t^{-(1+m)\alpha} \|u^0\|_{L^2\II}.
\end{equation*}
A similar argument also yields the $H^1\II$-estimate.
\end{proof}

\begin{remark}\label{rem:regular-err-semi-1}
A slightly modification leads to the error estimate for very weaker initial data
$u^0\in \dH s$ with some $s\in[-1,0]$.
In particular let $u^r$ and $u_h^r$ be the functions defined by \eqref{eqn:split-2} and \eqref{eqn:semi-reg},
respectively. Then for $t>0$, there holds
\begin{equation}\label{eqn:rem-est-1}
  \| (u^r-u_h^r)(t)\|_{L^2\II} + h\| \nabla (u^r-u_h^r)(t)\|_{L^2\II}
  \le ch^{2m+2+s} t^{-(1+m)\alpha} \|u^0\|_{\dH s}.
  \end{equation}
\end{remark}

\begin{remark}\label{rem:regular-err-semi-2}
The argument could be further extended to rougher initial data, such as the Dirac delta function $u^0 = \delta_{x_*} $
in two dimension with a fixed $x_* \in \Omega$. Then we consider the splitting
\begin{equation}\label{split-1-2}
    \begin{split}
    u^r(t)-u_h^r(t) & = F^r(t) u^0- F_h^r(t) P_h u^0  \\
& =  (F^r(t) u^0- F^r(t) P_h u^0) + (F^r(t) P_h u^0 - F_h^r(t)P_h u^0).
\end{split}
\end{equation}
The first term could be bounded using the smoothing property \eqref{eqn:reg-sm-est} and the $L^\infty$-stability of the $L^2$ projection (see \cite{Crouzeix-Thomee-1987})
\begin{equation*}
    \begin{split}
 \| F^r(t) u^0- F^r(t) P_h u^0\|_{L^2\II}
 &= \sup_{\phi\in L^2\II} \frac{(F^r(t) u^0- F^r(t) P_h u^0, \phi)}{\| \phi \|_{L^2\II}}
 \\
 & \le \sup_{\phi\in L^2\II} \frac{\|  (I - P_h) F^r(t) \phi \|_{L^\infty\II}}{\| \phi \|_{L^2\II}} \le C \sup_{\phi\in L^2\II} \frac{\|  (I - I_h) F^r(t) \phi \|_{L^\infty\II}}{\| \phi \|_{L^2\II}} \\
 & \le c h^{2m+1}\sup_{\phi\in L^2\II} \frac{\| F^r(t) \phi \|_{\dH{2m+2}}}{\| \phi \|_{L^2\II}}\le c h^{2m+1} t^{-(1+m)\alpha},
\end{split}
\end{equation*}
where we have used the $L^\infty$ error estimate for the Lagrange interpolation  (see Lemma \ref{lemma:err-I_h-Lyinfty}) in the second to last inequality.

Meanwhile, the second term in \eqref{split-1-2} could be bounded using the estimate \eqref{eqn:rem-est-1}
and the inverse inequality, i.e.,
\begin{equation*}
    \begin{split}
 \| F^r(t) P_h u^0 - F_h^r(t)P_h u^0 \|_{L^2\II} &\le  ch^{2m+2} t^{-(1+m)\alpha} \|P_h u^0\|_{L^2\II} \\
 &= ch^{2m+2} t^{-(1+m)\alpha}  \sup_{\phi\in L^2\II} \frac{|P_h\phi(x_*)|}{\| \phi \|_{L^2\II}}
 \le c h^{2m+1} t^{-(1+m)\alpha}.
\end{split}
\end{equation*}
As a result, we have the following error estimate for Dirac delta initial condition in two dimension
 $$ \|(u^r-u_h^r)(t) \|_{L^2\II} \le c h^{2m+1} t^{-(1+m)\alpha}.$$
This convergence rate is consistent with our numerical experiments, cf. Table \ref{table:exa52_space_m0}--\ref{table:exa52_space_m2}.
\end{remark}

\section{Time discretization}\label{sec:time}
In the preceding section, we have proposed a splitting of the exact solution into a time-dependent regular part plus several time-independent singular parts. Next, we shall develop and analyze a time stepping scheme for approximating the regular part using convolution quadrature.

We shall focus on time-stepping schemes with a uniform temporal mesh. Specifically, let $t_n=n\tau$, $n=0,1,\dots,N,$ be a uniform partition of the time interval $[0,T]$ with a time stepsize $\tau=N^{-1}T$,  $N\in\mathbb{N}$, and recall that the generating function of BDF method of order $k$, $k=1,\ldots,6$, is defined by
\begin{equation}\label{eqn:gen-fun-bdf}
  \delta_\tau(\zeta ):=\frac{\delta(\zeta)}{\tau}\quad \text{with}~~
  \delta(\zeta) = \sum_{j=1}^k \frac {1}{j} (1-\zeta )^j.
\end{equation}
The BDF$k$ method
is known to be $A(\vartheta_k)$-stable with angle $\vartheta_k= 90^\circ$, $90^\circ$, $86.03^\circ$,
$73.35^\circ$, $51.84^\circ$, $17.84^\circ$ for $k = 1,2,3,4,5,6$, respectively \cite[pp. 251]{HairerWanner:1996}.

Then we apply the following convolution quadrature to approximate the semidiscrete solution \eqref{eqn:semi-reg}:
\begin{align}\label{eqn:fully-reg-1}
U_h^{n,r} =    \frac{(-1)^m}{2\pi\rm{i}} \int_{\Gamma_{\theta,\kappa}^\tau} e^{zt_n}  \delta_\tau(e^{-z\tau})^{(1+m)\alpha-1} (\delta_\tau(e^{-z\tau})^\alpha+A_h)^{-1}A_h^{-m} P_h u^0\,\d z.
\end{align}
where the the contour $\Gamma_{\theta,\kappa}^\tau $ is
$\Gamma_{\theta,\kappa}^\tau :=\{ z\in \Gamma_{\theta,\kappa}:|\Im(z)|\le \tfrac{\pi}{\tau} \}$
oriented with an increasing imaginary part. The evaluation of the contour integral in \eqref{eqn:fully-reg-1} is equivalent to solving the following time-stepping scheme for $U_h^{n,r}$:
\begin{equation}\label{eqn:fully-reg-2}
\begin{aligned}
\tau^{-\alpha} \sum_{j=0}^n \omega_j^{(\alpha)} U_h^{n-j,r}  + A_h U_h^{n,r} =(-1)^m  \tau^{-(1+m)\alpha}\omega_n^{(1+m)\alpha-1} A_h^{-m} P_hu^0, \quad \text{for}~~ 0 \le n\le N.
\end{aligned}
\end{equation}
Here the quadrature weights $\big(\omega_j^{(\beta)}\big)_{j=0}^\infty$
are given by the coefficients in the following power series expansion
\begin{equation}\label{eqn:delta}
\delta_\tau(\zeta)^\beta=\frac{1}{\tau^\beta}\sum_{j=0}^\infty \omega_j^{(\beta)} \zeta ^j.
\end{equation}
with the generating function \eqref{eqn:gen-fun-bdf}.
Generally, those weights  can be evaluated efficiently via recursion or discrete
Fourier transform \cite{Podlubny:1999,Sousa:2012}.

Note that the time stepping scheme \eqref{eqn:fully-reg-2} begins with $n=0$,
which is different from the usual time stepping schemes for evolution problems.
The idea is closely related to correct the initial steps of the regular time stepping scheme
\cite{LubichSloanThomee:1996,CuestaLubichPalencia:2006,JinLiZhou:2017sisc,WangZhou:2020sinum}.
See a brief explanation in \cite[Appendix A]{JinLiZhou:2017sisc}.

The next Lemma shows the equivalence between the convolution quadrature \eqref{eqn:fully-reg-1} and
the time stepping scheme \eqref{eqn:fully-reg-2}.

\begin{lemma}\label{lem:time-step}
The function $U_h^{n,r}$ given by the contour integral \eqref{eqn:fully-reg-1} is the solution of
the time stepping scheme \eqref{eqn:fully-reg-2} for all $0\le n\le N$.
\end{lemma}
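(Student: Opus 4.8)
The plan is to show that the contour integral \eqref{eqn:fully-reg-1} and the recursion \eqref{eqn:fully-reg-2} produce sequences $(U_h^{n,r})_{n\ge0}$ with one and the same generating function, and then conclude by uniqueness of Taylor coefficients. Throughout I would use the standard fact, a consequence of the $A(\vartheta_k)$-stability of BDF$k$ (cf.\ \cite[pp.~251]{HairerWanner:1996}) and of the choice of $\theta\in(\tfrac\pi2,\pi)$, that $\delta_\tau(\zeta)\in\Sigma_\theta$ whenever $|\zeta|$ is sufficiently small --- equivalently $\delta_\tau(e^{-z\tau})\in\Sigma_\theta$ for every $z$ on and to the right of $\Gamma_{\theta,\kappa}^\tau$ --- so that, since $A_h$ is positive definite, $\big(\delta_\tau(\zeta)^\alpha+A_h\big)^{-1}$ is a bounded, holomorphic, $X_h$-valued function of $\zeta$ on a disc $|\zeta|<\rho_0$. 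In particular
\begin{equation}\label{eqn:planGzeta}
G(\zeta):=(-1)^m\tau^{-1}\,\delta_\tau(\zeta)^{(1+m)\alpha-1}\big(\delta_\tau(\zeta)^\alpha+A_h\big)^{-1}A_h^{-m}P_hu^0
\end{equation}
is holomorphic on $|\zeta|<\rho_0$ and there admits a convergent Taylor expansion.

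First I would rewrite \eqref{eqn:fully-reg-1}. Its integrand is holomorphic in $z$ on and to the right of $\Gamma_{\theta,\kappa}^\tau$ and $2\pi\mathrm i/\tau$-periodic in $z$; hence for $\sigma>0$ large enough I can deform $\Gamma_{\theta,\kappa}^\tau$ to the vertical segment $\{z:\Re z=\sigma,\ |\Im z|\le\pi/\tau\}$ without changing the integral, the two horizontal connecting pieces at heights $\pm\pi/\tau$ cancelling because the integrand takes equal values on them while being traversed in opposite senses. Substituting $\zeta=e^{-z\tau}$, so that $e^{zt_n}=\zeta^{-n}$ and $\d z=-\d\zeta/(\tau\zeta)$, this segment is carried onto the circle $|\zeta|=e^{-\sigma\tau}$ traversed clockwise, and (choosing $\sigma$ also so large that $e^{-\sigma\tau}<\rho_0$) the integral in \eqref{eqn:fully-reg-1} becomes
\begin{equation*}
U_h^{n,r}=\frac{1}{2\pi\mathrm i}\oint_{|\zeta|=e^{-\sigma\tau}}G(\zeta)\,\zeta^{-n-1}\,\d\zeta ,
\end{equation*}
which by Cauchy's formula is precisely the $n$-th Taylor coefficient of $G$ at $\zeta=0$.

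It then remains to verify that the Taylor coefficients of \eqref{eqn:planGzeta} solve \eqref{eqn:fully-reg-2}. Rearranging \eqref{eqn:planGzeta} gives the $X_h$-valued identity $\big(\delta_\tau(\zeta)^\alpha+A_h\big)G(\zeta)=(-1)^m\tau^{-1}\delta_\tau(\zeta)^{(1+m)\alpha-1}A_h^{-m}P_hu^0$ on $|\zeta|<\rho_0$. Inserting $G(\zeta)=\sum_{n\ge0}U_h^{n,r}\zeta^n$ together with the defining expansions \eqref{eqn:delta}, i.e.\ $\sum_{j\ge0}\omega_j^{(\beta)}\zeta^j=\tau^\beta\delta_\tau(\zeta)^\beta$ for $\beta=\alpha$ and $\beta=(1+m)\alpha-1$, forming the Cauchy product on the left, and comparing the coefficients of $\zeta^n$, I would obtain exactly
\begin{equation*}
\tau^{-\alpha}\sum_{j=0}^n\omega_j^{(\alpha)}U_h^{n-j,r}+A_hU_h^{n,r}=(-1)^m\tau^{-(1+m)\alpha}\omega_n^{(1+m)\alpha-1}A_h^{-m}P_hu^0 ,\qquad 0\le n\le N,
\end{equation*}
which is \eqref{eqn:fully-reg-2}; this proves the lemma.

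The one step that is more than bookkeeping is the contour deformation together with the holomorphy of the integrand on and to the right of $\Gamma_{\theta,\kappa}^\tau$, i.e.\ the invertibility of $\delta_\tau(e^{-z\tau})^\alpha+A_h$ there, which in turn rests on the $A(\vartheta_k)$-stability of BDF$k$ and on $\theta$ being chosen as above; this is the main, though by now standard, obstacle, and the technicalities are handled as in \cite[Appendix~A]{JinLiZhou:2017sisc} and the convolution-quadrature analyses of \cite{LubichSloanThomee:1996,CuestaLubichPalencia:2006}. The remaining items --- positive radius of convergence of $G$, legitimacy of interchanging summation and integration, and the coefficient comparison --- are routine.
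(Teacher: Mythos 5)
Your proposal is correct and uses the same machinery as the paper's proof --- the generating function of $(U_h^{n,r})_{n\ge 0}$, the power-series expansion \eqref{eqn:delta}, Cauchy's integral formula under the substitution $\zeta=e^{-z\tau}$, and a contour deformation between $\Gamma_{\theta,\kappa}^\tau$ and a vertical segment justified by periodicity and analyticity --- only run in the reverse direction (from the contour integral \eqref{eqn:fully-reg-1} to the scheme \eqref{eqn:fully-reg-2}, rather than from the scheme to the integral), which is equivalent since the recursion \eqref{eqn:fully-reg-2} determines $U_h^{n,r}$ uniquely. The analyticity of $z\mapsto(\delta_\tau(e^{-z\tau})^\alpha+A_h)^{-1}$ in the region swept by the deformation is asserted with essentially the same level of detail as in the paper, so no genuine gap remains.
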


\begin{proof}
We begin with the time stepping scheme \eqref{eqn:fully-reg-2}.
By multiplying both sides of the relation \eqref{eqn:fully-reg-2} by $\zeta ^n$, summing over $n$ from $0$ to $\infty$
and collecting terms, we obtain
\begin{equation*}
\begin{aligned}
\sum_{n=0}^\infty \zeta^n \Big(\tau^{-\alpha} \sum_{j=0}^n \omega_j^{(\alpha)} U_h^{n-j,r}\Big)  + A_h
\sum_{n=0}^\infty U_h^{n,r} \zeta^n
&=(-1)^m  \tau^{-1} A_h^{-m} P_hu^0  \Big(\tau^{1-(1+m)\alpha} \sum_{n=0}^\infty \omega_n^{(1+m)\alpha-1} \zeta^n \Big)\\
&=(-1)^m  \tau^{-1} A_h^{-m} P_hu^0 \delta_\tau(\zeta)^{(1+m)\alpha-1}.
\end{aligned}
\end{equation*}
For any sequence $(v^n)_{n=1}^\infty$, we denotes its generating function by
$\widetilde v(\xi) =\sum_{n=0}^\infty v^n \zeta^n$.
The the leading term in the above relation can be written as
\begin{equation*}
\begin{aligned}
\sum_{n=0}^\infty \zeta^n \Big(\tau^{-\alpha} \sum_{j=0}^n \omega_j^{(\alpha)} U_h^{n-j,r}\Big)
&= \tau^{-\alpha} \sum_{j=0}^\infty\omega_j^{(\alpha)}  \zeta^j
\Big( \sum_{n=j}^\infty   U_h^{n-j,r} \zeta^{n-j}  \Big)\\
&=   \delta_\tau(\zeta)^\alpha \widetilde U_h^{r}(\zeta).
\end{aligned}
\end{equation*}
Therefore we obtain
\begin{equation*}
\begin{aligned}
\widetilde U_h^{r}(\zeta) = (-1)^m  \tau^{-1}   \delta_\tau(\zeta)^{(1+m)\alpha-1}
 ( \delta_\tau(\zeta)^\alpha + A_h)^{-1} A_h^{-m} P_hu^0.
\end{aligned}
\end{equation*}
Since $\widetilde U_h^{r}(\xi) $ is analytic with respect
to $\zeta $ in the unit disk on the complex plane $\mathbb{C}$, thus Cauchy's integral formula and
the change of variables $\zeta =e^{-z\tau}$ lead to the following representation for arbitrary $\varrho\in(0,1)$
\begin{equation}\label{repr-Wn}
\begin{aligned}
    U_h^{n,r} &= \frac{1}{2 \pi\mathrm{i}}\int_{|\zeta |=\varrho}\zeta ^{-n-1} \widetilde U_h^{r}(\zeta)\, \d\zeta
    = \frac{\tau}{2\pi\mathrm{i}}\int_{\Gamma^\tau} e^{zt_{n}}\widetilde  U_h^{r}(e^{-z\tau})\, \d z\\
    &= \frac{(-1)^m}{2\pi\mathrm{i}}\int_{\Gamma^\tau}
    e^{zt_{n}}\delta_\tau(e^{-z\tau})^{(1+m)\alpha-1}
 ( \delta_\tau(e^{-z\tau})^\alpha + A_h)^{-1} A_h^{-m} P_hu^0\, \d z
    \end{aligned}
\end{equation}
where $\Gamma^\tau$ is given by
$\Gamma^\tau:=\{ z=-\frac{\log \varrho}{\tau}+\mathrm{i} y: \, y\in{\mathbb R}\,\,\,\mbox{and}\,\,\,|y|\le \frac{\pi}{\tau} \}$.

Note that $\delta_\tau(e^{-z\tau})^{(1+m)\alpha-1}
 ( \delta_\tau(e^{-z\tau})^\alpha + A_h)^{-1} $ is analytic for $z\in \Sigma_{\theta,\kappa}^\tau$, which is a region enclosed by $\Gamma^\tau$, $\Gamma^\tau_{\theta,\kappa}$ and the two lines $\Gamma_{\pm}^\tau:
={\mathbb R}\pm \mathrm{i}\frac{\pi}{\tau}$ (oriented from left to right).
Using the periodicity of $e^{-z\tau}$ and Cauchy's theorem, we deform the contour $\Gamma^\tau$
to $\Gamma_{\theta,\kappa}^\tau$ in the integral \eqref{repr-Wn} to obtain the desired representation \eqref{eqn:fully-reg-1}.
\end{proof}

Finally, we study the error of convolution approximation. To this end, we
need the following lemma on the sectorial property and approximation property
of the generating function $\delta_\tau(\zeta)$.
See the detailed proof in \cite[Lemma B.1]{JinLiZhou:2017sisc}.
\begin{lemma}\label{lem:cq-delta}
For any $\varepsilon$, there exists $\theta_\varepsilon\in (\frac\pi2,\pi)$ such that for any
$\theta\in (\frac\pi2,\theta_\varepsilon)$,
there exist positive constants $c,c_1,c_2$ $($independent of $\tau$$)$ such that
\begin{equation*}
  \begin{aligned}
& c_1|z|\leq
|\delta_\tau(e^{-z\tau})|\leq c_2|z|,
&&\delta_\tau(e^{-z\tau})\in \Sigma_{\pi-\vartheta_k+\varepsilon}, \\
& |\delta_\tau(e^{-z\tau})-z|\le c\tau^k|z|^{k+1},
&& |\delta_\tau(e^{-z\tau})^\alpha-z^\alpha|\leq c\tau^k|z|^{k+\alpha},
&& \forall\, z\in \Gamma_{\theta,\kappa}^\tau,
\end{aligned}
\end{equation*}
where $\sigma>0$ and the contour $\Gamma_{\theta,\kappa}^\tau \subset \mathbb{C}$ is defined by
$$\Gamma_{\theta,\kappa}^\tau :=
\{ z=\rho e^{\pm\mathrm{i}\theta}: \rho\ge \kappa, |\Im(z)|\le \tfrac{\pi}{\tau} \}\cup
\left\{z=\kappa e^{\rm i\varphi}: |\varphi |\le \theta\right\}.$$
\end{lemma}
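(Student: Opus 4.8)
The plan is to reduce all four assertions to $\tau$-independent statements about the single analytic function $g(w):=\delta(e^{-w})$, where $\delta(\zeta)=\sum_{j=1}^k\frac1j(1-\zeta)^j$, using the identity $\delta_\tau(e^{-z\tau})=\tau^{-1}g(z\tau)$. As $z$ runs over the truncated contour $\Gamma_{\theta,\kappa}^\tau$, the rescaled variable $w=z\tau$ ranges over a fixed bounded region contained in $\{w:|\arg w|\le\theta\}$, and because the cut-off $|\Im z|\le\pi/\tau$ together with $\theta$ close to $\pi/2$ keeps $\Re w\ge -C|\cos\theta|$, the point $\zeta=e^{-w}$ stays inside a slightly enlarged closed unit disk $\{|\zeta|\le1+\eta\}$ with $\eta\to0$ as $\theta\to(\pi/2)^+$. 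Thus it suffices to show, uniformly on this region, that $c_1|w|\le|g(w)|\le c_2|w|$, that $g(w)\in\Sigma_{\pi-\vartheta_k+\varepsilon}$, and that $|g(w)-w|\le c|w|^{k+1}$ and $|g(w)^\alpha-w^\alpha|\le c|w|^{k+\alpha}$; the conclusions of the lemma follow upon undoing the rescaling $w=z\tau$.

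Near the origin, say for $|w|\le w_0$ with a small fixed $w_0$, I would use that $\delta$ is precisely the degree-$k$ Taylor polynomial of $-\log\zeta$ about $\zeta=1$, so that $g(w)=w-\sum_{j>k}\frac1j(1-e^{-w})^j$ whenever $|1-e^{-w}|<1$. Since $|1-e^{-w}|\le c|w|$ for small $|w|$, this series converges and yields $|g(w)-w|\le c|w|^{k+1}$, hence $g(w)/w=1+O(|w|^k)$; from this the two-sided bound $\tfrac12|w|\le|g(w)|\le2|w|$, the membership $g(w)\in\Sigma_{\theta+\varepsilon/2}$, and (by expanding the principal branch of the $\alpha$-th power of $g(w)/w$) the bound $|g(w)^\alpha-w^\alpha|\le c|w|^{k+\alpha}$ all follow after shrinking $w_0$. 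Choosing $\theta_\varepsilon$ close enough to $\pi/2$ guarantees $\theta+\varepsilon/2\le\pi-\vartheta_k+\varepsilon$, using that $\vartheta_k\le90^\circ$ for $k=1,\dots,6$.

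On the complementary compact set $\{w_0\le|w|\le\pi/\sin\theta,\ |\arg w|\le\theta\}$ I would argue by compactness, the non-elementary input being the stability of BDF$k$ for $k\le6$: the polynomial $\delta$ maps the closed unit disk into the sector $\Sigma_{\pi-\vartheta_k}$ and has $\zeta=1$ as its only zero there (a well-known consequence of the $A(\vartheta_k)$-stability and zero-stability of BDF$k$ for $k\le6$). By continuity of $\delta$ these two properties persist, with the half-angle relaxed to $\pi-\vartheta_k+\varepsilon/2$ and the disk enlarged to $\{|\zeta|\le1+\eta\}$, provided one stays away from a small fixed neighbourhood of $\zeta=1$; and on the compact $w$-set the point $\zeta=e^{-w}$ lies in this enlarged disk (take $\theta_\varepsilon$ so that the excursion of $|\zeta|$ past $1$ is at most $\eta$) and is bounded away from $\zeta=1$ (since $|w|\ge w_0$ and $|\Im w|\le\pi<2\pi$). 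Hence $g(w)=\delta(e^{-w})$ is nonzero and lies in $\Sigma_{\pi-\vartheta_k+\varepsilon}$; as $|g(w)|$ then has a positive minimum and a finite maximum on the compact set while $|w|$ is bounded above and below, the two-sided bound follows, and $|g(w)-w|$ and $|g(w)^\alpha-w^\alpha|$ are there trivially dominated by $c|w|^{k+1}$ and $c|w|^{k+\alpha}$ respectively. I expect the verification and use of the BDF stability facts to be the main obstacle, and it is exactly what forces $\theta$ to be taken close to $\pi/2$; a fully detailed argument is given in \cite[Lemma B.1]{JinLiZhou:2017sisc}.
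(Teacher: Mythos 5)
Your proposal is correct, and it is essentially the argument the paper relies on: the paper gives no proof of Lemma \ref{lem:cq-delta} itself but defers to \cite[Lemma B.1]{JinLiZhou:2017sisc}, whose proof follows the same standard route you sketch — rescaling $w=z\tau$ so that $\tau$ disappears, Taylor expansion of $\delta(\zeta)$ as the truncation of $-\log\zeta$ near $\zeta=1$ to get the $O(|w|^{k+1})$ and $O(|w|^{k+\alpha})$ bounds, and $A(\vartheta_k)$-stability plus zero-stability combined with a continuity/compactness argument on the slightly enlarged disk away from $\zeta=1$ for the sectorial and two-sided bounds. Your handling of the choice of $\theta_\varepsilon$ (to keep $|e^{-w}|\le 1+\eta$ and to absorb the $O(|w|^k)$ argument perturbation into $\varepsilon$) is the right way to make the constants independent of $\tau$ and $\kappa$.
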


\begin{theorem}\label{thm:fully-err}
Let $U_h^{n,r}$ be the function defined by the convolution quadrature \eqref{eqn:fully-reg-1},
and $u_h^r$ be the function defined by the contour integral \eqref{eqn:semi-reg}. Then we have
$$  \| U_h^{n,r} - u_h^r(t_n) \|_{L^2\II} \le c \tau^k t_n^{-k-m\alpha} \| u^0 \|_{\dH s}\quad
\text{for any}~~ s\in [-1,0].$$
\end{theorem}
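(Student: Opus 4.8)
The plan is to compare the two contour integral representations term by term. Writing $G(z) := z^{(1+m)\alpha-1}(z^\alpha+A_h)^{-1}A_h^{-m}P_hu^0$ for the semidiscrete integrand and $G_\tau(z) := \delta_\tau(e^{-z\tau})^{(1+m)\alpha-1}(\delta_\tau(e^{-z\tau})^\alpha+A_h)^{-1}A_h^{-m}P_hu^0$ for the fully discrete one, the error is
\[
U_h^{n,r}-u_h^r(t_n) = \frac{(-1)^m}{2\pi\mathrm{i}}\Big(\int_{\Gamma_{\theta,\kappa}^\tau}e^{zt_n}(G_\tau(z)-G(z))\,\d z - \int_{\Gamma_{\theta,\kappa}\setminus\Gamma_{\theta,\kappa}^\tau}e^{zt_n}G(z)\,\d z\Big),
\]
after choosing the same $\kappa=t_n^{-1}$ on both contours and using that $G$ is analytic between $\Gamma_{\theta,\kappa}$ and $\Gamma_{\theta,\kappa}^\tau$ plus the truncation tails $\Gamma_\pm^\tau$. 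The second ("tail") integral is handled by the elementary bound $\|G(z)\|_{L^2\II}\le c|z|^{(1+m)\alpha-1}|z^\alpha|^{-1}\|A_h^{-m}P_hu^0\|_{\dH{2m}}\le c|z|^{-1}\|u^0\|_{\dH s}|z|^{-s/2}$ on the rays $|\Im z|\ge\pi/\tau$, which after integration gives a contribution bounded by $c\tau^{k}t_n^{-k-m\alpha}\|u^0\|_{\dH s}$ for the BDF$k$ orders under consideration (the exponential decay $e^{\Re(z)t_n}$ together with $\Re(z)\sim-|\Im z|\gtrsim-\tau^{-1}$ supplies arbitrarily many powers of $\tau/t_n$).

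The main work is the first integral. The key estimate is a pointwise bound
\[
\|G_\tau(z)-G(z)\|_{L^2\II} \le c\,\tau^k |z|^{k}\,|z|^{(1+m)\alpha-1}\,|z^\alpha|^{-1}\,\|A_h^{-m}P_hu^0\|_{\dH{2m}}\quad\text{for }z\in\Gamma_{\theta,\kappa}^\tau.
\]
To get it, one writes $G_\tau-G$ as a sum of two differences: first replace $z^{(1+m)\alpha-1}$ by $\delta_\tau(e^{-z\tau})^{(1+m)\alpha-1}$ while keeping the resolvent at $z^\alpha$, and then change $z^\alpha$ to $\delta_\tau(e^{-z\tau})^\alpha$ inside the resolvent via the identity
\[
(\delta_\tau(e^{-z\tau})^\alpha+A_h)^{-1}-(z^\alpha+A_h)^{-1} = -(\delta_\tau(e^{-z\tau})^\alpha-z^\alpha)(\delta_\tau(e^{-z\tau})^\alpha+A_h)^{-1}(z^\alpha+A_h)^{-1}.
\]
Lemma \ref{lem:cq-delta} supplies $|\delta_\tau(e^{-z\tau})^{(1+m)\alpha-1}-z^{(1+m)\alpha-1}|\le c\tau^k|z|^{k+(1+m)\alpha-1}$ (from the mean value theorem applied to $\zeta\mapsto\zeta^{(1+m)\alpha-1}$, using $|\delta_\tau(e^{-z\tau})|\sim|z|$) and $|\delta_\tau(e^{-z\tau})^\alpha-z^\alpha|\le c\tau^k|z|^{k+\alpha}$, while the resolvent bound \eqref{eqn:resol} applied to the discrete operator $A_h$ (self-adjoint, positive) gives $\|(\delta_\tau(e^{-z\tau})^\alpha+A_h)^{-1}\|\le c|z|^{-\alpha}$ since $\delta_\tau(e^{-z\tau})^\alpha\in\Sigma_{\alpha(\pi-\vartheta_k+\varepsilon)}$ lies in a sector of half-angle $<\pi$ for $\varepsilon$ small. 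Collecting these and using $\|A_h^{-m}P_hu^0\|_{\dH{2m}}\le c\|P_hu^0\|_{L^2\II}\le c\|u^0\|_{L^2\II}$ (and, by interpolation/negative-norm duality together with the $L^2$-stability of $P_h$ on $\dH s$, $\le c\|u^0\|_{\dH s}$ for $s\in[-1,0]$) yields the pointwise bound above.

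Finally I integrate the pointwise bound over $\Gamma_{\theta,\kappa}^\tau$ with $\kappa=t_n^{-1}$: on the arc $|z|=\kappa$ the integrand is $O(\tau^k\kappa^{k+m\alpha-1})$ over a contour of length $O(\kappa)$, contributing $O(\tau^k t_n^{-k-m\alpha})$; on the two rays $z=\rho e^{\pm\mathrm{i}\theta}$ the factor $e^{\Re(z)t_n}=e^{\rho t_n\cos\theta}$ (with $\cos\theta<0$) makes $\int_\kappa^\infty e^{\rho t_n\cos\theta}\rho^{k+m\alpha-1}\,\d\rho = O(t_n^{-(k+m\alpha)})$. Multiplying by $\tau^k$ gives exactly $c\tau^k t_n^{-k-m\alpha}\|u^0\|_{\dH s}$. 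The $H^1$-type factor $|z|^{-s/2}$ coming from the rougher data is absorbed in the same way since $k+m\alpha-1-s/2>-1$. The step I expect to be most delicate is the bookkeeping of the branch-cut quantity $z^{(1+m)\alpha-1}$ when $(1+m)\alpha-1$ can be negative and non-integer: one must verify that both $\zeta\mapsto\zeta^{(1+m)\alpha-1}$ and $\zeta\mapsto\zeta^\alpha$ stay within their principal branches along the image of $\Gamma_{\theta,\kappa}^\tau$ under $\delta_\tau(e^{-\cdot\,\tau})$, which is guaranteed by the sectorial inclusion $\delta_\tau(e^{-z\tau})\in\Sigma_{\pi-\vartheta_k+\varepsilon}$ in Lemma \ref{lem:cq-delta} but has to be invoked carefully to license the mean value estimates.
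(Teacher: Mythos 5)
Your overall strategy is the same as the paper's: split the error into the truncated-contour integral of the kernel difference plus the tail $\Gamma_{\theta,\kappa}\setminus\Gamma^\tau_{\theta,\kappa}$, use Lemma \ref{lem:cq-delta} together with the resolvent estimate to get the pointwise bound $c\tau^k|z|^{m\alpha+k-1}$ for the kernel difference (your two-step decomposition, prefactor first and then the resolvent identity, is exactly how that bound is obtained), and integrate with $\kappa=t_n^{-1}$. The tail treatment is also fine in spirit: the paper simply inserts $1\le c\tau^k|z|^k$ on $|z|\gtrsim \pi/\tau$ rather than invoking the exponential decay, but both give $c\tau^k t_n^{-k-m\alpha}$; your intermediate factor $|z|^{-s/2}$ there is not meaningful (the data term $A_h^{-m}P_hu^0$ is $z$-independent), but this is cosmetic.

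The genuine gap is in how you reach the $\dH{s}$ norm for $s\in[-1,0)$. Your chain bounds the data factor by $\|P_hu^0\|_{L^2\II}\le\|u^0\|_{L^2\II}$, and the parenthetical claim that this is in turn $\le c\|u^0\|_{\dH{s}}$ is false for $s<0$: the inequality between $L^2\II$ and $\dH{s}$ goes the other way, and neither interpolation nor the $\dH{s}$-stability of $P_h$ (which controls $\|P_hv\|_{\dH{s}}$, not $\|P_hv\|_{L^2\II}$) can rescue it; indeed $\|P_hu^0\|_{L^2\II}\le C_h\|u^0\|_{\dH{-1}}$ only with $C_h\sim h^{-1}$. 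Once you have passed to $\|P_hu^0\|_{L^2\II}$ you have thrown away exactly the smoothing that makes negative $s$ work, so as written your argument proves the theorem only for $s=0$. The fix (and the paper's route) is to keep the sharp factor $\|A_h^{-m}P_hu^0\|_{L^2\II}$ coming from the resolvent estimate and use the smoothing of $A_h^{-1}$ (this needs $m\ge1$): $\|A_h^{-m}P_hu^0\|_{L^2\II}\le c\|A_h^{-1}P_hu^0\|_{\dH{1}}=c\sup_{\chi\in X_h}(u^0,\chi)/\|\nabla\chi\|_{L^2\II}\le c\|u^0\|_{\dH{-1}}\le c\|u^0\|_{\dH{s}}$ for $s\in[-1,0]$. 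With that replacement your pointwise bound and the contour integration deliver the stated estimate.
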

\begin{proof}
Let $K(z)= z^{(1+m)\alpha-1} (z^\alpha + A_h)^{-1}$. Then we may split the error as
\begin{equation*}
  \begin{aligned}
  U_h^{n,r} - u_h^r(t_n) &= \frac{(-1)^m}{2\pi\mathrm{i}} \int_{\Gamma^\tau_{\theta,\kappa}}
  e^{zt_n} \Big(K(z) - K(\delta_\tau(e^{-z\tau}))\Big) A_h^{-m} P_hu^0  \, \d  z\\
  &\quad + \frac{(-1)^m}{2\pi\mathrm{i}} \int_{\Gamma_{\theta,\kappa}\backslash\Gamma^\tau_{\theta,\kappa}}
  e^{zt_n} K(z) A_h^{-m} P_hu^0 \, \d z =: I_1 + I_2.
\end{aligned}
\end{equation*}
Using the resolvent estimate \eqref{eqn:resol} and Lemma \ref{lem:cq-delta}, we derive
$$ \| K(z) - K(\delta_\tau(e^{-z\tau}) \|_{L^2\II \rightarrow L^2\II} \le c \tau^k |z|^{m\alpha+k-1} .$$
As a result,
we choose $\kappa=t_n^{-1}$ in the contour $\Gamma_{\theta,\kappa}^\tau$, we obtain an estimate for $I_1$:
\begin{equation*}
  \begin{aligned}
 \|  I_1 \|_{L^2\II}&\le
 c \tau^k  \| A_h^{-m} P_h u^0 \|_{L^2(\Omega)} \Big(\int_{\kappa}^\infty  e^{\kappa t_n\cos\theta} \rho^{m\alpha+k-1}\,\d \rho
     + \int_{-\theta}^{\theta}   e^{\kappa t_n \cos\varphi} \kappa^{m\alpha+k}\,\d\varphi \Big)\\
     &\le c \tau^k(t_n^{-m\alpha-k}+\kappa^{m\alpha+k}) \| A_h^{-m} P_h u^0\|_{L^2(\Omega)}
     \le c \tau^k t_n^{-k-m\alpha}  \|A_h^{-m} P_h u^0 \|_{L^2(\Omega)}\\
     & \le c \tau^k t_n^{-k-m\alpha} \| u^0 \|_{\dH s},
\end{aligned}
\end{equation*}
for any $s\in[-1,0]$. The last inequality follows from the stability of $P_h$ in $\dH s$ for $s\in[-1,0]$:
$$  \|A_h^{-m} P_h u^0  \|_{L^2(\Omega)} \le  c \|A_h^{-1} P_h u^0  \|_{\dH1} \le c \| P_h u^0 \|_{\dH{-1}} \le c \|  u^0  \|_{\dH{-1}}.  $$

Meanwhile, for the term ${\rm I}_2$, we  apply
the resolvent estimate \eqref{eqn:resol} and Lemma \ref{lem:cq-delta} to derive
\begin{equation*}
\begin{aligned}
 \|  I_1 \|_{L^2\II} &\le  c \int_{\frac{\pi}{\tau\sin\theta}}^\infty e^{-c \rho t_n}  \rho^{m\alpha-1}
  \|A_h^{-m} P_h u^0   \|_{L^2(\Omega)} \, \d \rho\\
&\le c \tau^k  \int_{\frac{\pi}{\tau\sin\theta}}^\infty e^{-c \rho t_n}  \rho^{m\alpha+k-1}   \|A_h^{-m} P_h u^0  \|_{L^2(\Omega)} \, \d \rho\\
&\le c\tau^k t_n^{-m\alpha-k} \|A_h^{-m} P_h u^0  \|_{L^2(\Omega)}
\le  c \tau^k t_n^{-m\alpha-k}\|   u^0 \|_{\dH{-1}}.
\end{aligned}
\end{equation*}
This completes the proof of the theorem.
\end{proof}

In view of Remarks \ref{rem:regular-err-semi-1}--\ref{rem:regular-err-semi-2} and Theorem \ref{thm:fully-err}, we have the
following error estimate for the fully discrete solution.
\begin{corollary}\label{cor:fully-err}
Assume that $u^0 \in \dH s$ with some $s\in[-1,0]$, and $u$ is the solution to \eqref{eqn:fde} with $f=0$.
Let $U_h^{n,r}$ be the function defined by the convolution quadrature \eqref{eqn:fully-reg-1}.
Suppose that $\phi_{j,h} \in X_h$ is an approximation to $A^{-j} v$.
Then the fully discrete solution
\begin{equation}\label{eqn:fully}
 U_h^{n} = \sum_{j=1}^m (-1)^{j+1} \frac{t^{-j\alpha}}{\Gamma(1-j\alpha)} \phi_{j,h} + U_h^{n,r}
\end{equation}
satisfies the following error estimate
$$  \| U_h^{n} - u(t_n) \|_{L^2\II}
\le c \Big(h^{2m+2+s} t_n^{-(1+m)\alpha} + \tau^k t_n^{-k-m\alpha}\Big) \| v \|_{\dH s} + c\sum_{j=1}^m t_n^{-j\alpha} \| \phi_{j,h} -  A^{-j} v \|_{L^2\II} .$$
\end{corollary}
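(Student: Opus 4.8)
The plan is to combine the triangle inequality with the splitting \eqref{eqn:split-2}--\eqref{eqn:split-3} of the exact solution and the definition \eqref{eqn:fully} of the fully discrete solution. Writing
\[
u(t_n) = u^r(t_n) + \sum_{j=1}^m (-1)^{j+1}\frac{t_n^{-j\alpha}}{\Gamma(1-j\alpha)} A^{-j}v
\quad\text{and}\quad
U_h^n = U_h^{n,r} + \sum_{j=1}^m (-1)^{j+1}\frac{t_n^{-j\alpha}}{\Gamma(1-j\alpha)} \phi_{j,h},
\]
I would subtract these and split the error into three groups: (i) the difference of the regular parts at the continuous level versus the semidiscrete level, $\|u^r(t_n)-u_h^r(t_n)\|_{L^2\II}$; (ii) the time-discretization error of the regular part, $\|u_h^r(t_n)-U_h^{n,r}\|_{L^2\II}$; and (iii) the singular-part error $\sum_{j=1}^m \frac{t_n^{-j\alpha}}{|\Gamma(1-j\alpha)|}\|\phi_{j,h}-A^{-j}v\|_{L^2\II}$.

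For group (i), I would invoke Theorem~\ref{thm:regular} when $s=0$, or more precisely Remark~\ref{rem:regular-err-semi-1} (the estimate \eqref{eqn:rem-est-1}) for general $s\in[-1,0]$, which gives the bound $ch^{2m+2+s}t_n^{-(1+m)\alpha}\|v\|_{\dH s}$. For group (ii), Theorem~\ref{thm:fully-err} directly supplies $c\tau^k t_n^{-k-m\alpha}\|v\|_{\dH s}$ for any $s\in[-1,0]$. Group (iii) is already in the desired form since $|\Gamma(1-j\alpha)|$ is a fixed positive constant for each $j=1,\dots,m$ (note $j\alpha\notin\{1,2,\dots\}$ is implicitly assumed, or $\Gamma(1-j\alpha)$ is interpreted via analytic continuation and is nonzero), so this term contributes exactly $c\sum_{j=1}^m t_n^{-j\alpha}\|\phi_{j,h}-A^{-j}v\|_{L^2\II}$. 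Adding the three bounds and absorbing constants yields the stated estimate.

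I do not expect any genuine obstacle here: the corollary is essentially a bookkeeping assembly of results already proved. The only points requiring a word of care are (a) matching the hypothesis $s\in[-1,0]$ consistently across the two theorems — both have been stated in that range, so this is fine — and (b) noting that the constants $t_n^{-j\alpha}/|\Gamma(1-j\alpha)|$ for the finitely many values $j=1,\dots,m$ can be uniformly bounded by a constant times $t_n^{-j\alpha}$ (with the constant depending on $m$ and $\alpha$ but not on $h,\tau,n$), which is immediate. Thus the proof is just: apply the triangle inequality to \eqref{eqn:split-2} and \eqref{eqn:fully}, bound the regular semidiscrete error by \eqref{eqn:rem-est-1}, bound the time-stepping error by Theorem~\ref{thm:fully-err}, keep the singular-part terms as they are, and sum.
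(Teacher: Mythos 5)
Your proof is correct and follows essentially the same route as the paper, which states the corollary as an immediate consequence of the splitting \eqref{eqn:split-2}, the semidiscrete estimate \eqref{eqn:rem-est-1} of Remark \ref{rem:regular-err-semi-1}, and the time-stepping estimate of Theorem \ref{thm:fully-err}, assembled by the triangle inequality exactly as you do. Your side remark on $\Gamma(1-j\alpha)$ is harmless: if $j\alpha$ were a positive integer the factor $1/\Gamma(1-j\alpha)$ vanishes and that singular term simply drops out, so the stated bound holds in every case.
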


\section{Discussion on the approximation to the singular part $\sum_{j=1}^m u^s_j(t)$}\label{sec:singular}
The singular part $\sum_{j=1}^m u^s_j(t)$ of the solution in \eqref{eqn:split-2}, with
\begin{align}\label{singular-part}
u_j^s(t) = (-1)^{j+1} \frac{t^{-j\alpha}}{\Gamma(1-j\alpha)} A^{-j} u^0 ,
\end{align}
should be approximated separately. Since $u_j^s(t)$ can be computed by solving several elliptic equations, its computational cost is much smaller than the computation of the regular part (which requires solving an evolution problem; see the full discretization in the next section). Therefore, in general, the singular part can be solved by a much smaller mesh size without significantly increasing the overall computational cost.

In the following, we discuss several cases in which the singular part can be solved with high-order accuracy without using smaller meshes.

\begin{example}[Piecewise smooth initial data] \label{Example-1}
\upshape

If the initial value $u^0\in L^2(\Omega)$ is globally discontinuous (therefore nonsmooth) but piecewise smooth, separated by a smooth closed interface $\Gamma\subset\Omega$, then one can approximate $q_j=A^{-j}u^0$ by $q_{j,h}=A_h^{-j}P_hu^0$ using isoparametric finite element method with triangulations fitting the interface. The computation of $q_{j,h}=A_h^{-j}P_hu^0$ is equivalent to solving the following several standard elliptic equations:
$$
A_h q_{k,h} = q_{k-1,h},\quad k=1,\dots,j, \quad\mbox{with}\quad q_{0,h}=P_hu^0 .
$$
By denoting $\Omega=\Omega_1\cup\Omega_2\cup\Gamma$, where $\Omega_1$ and $\Omega_2$ are separated by a smooth interface $\Gamma$, the following high-order convergence can be achieved for isoparametric finite elements of degree $2m+1$ fitting the interface $\Gamma$:
\begin{align}\label{high-order-qj}
\| q_{j,h} - q_j \|_{L^2}
\le Ch^{2m+2}\|u^0\|_{H^{2m+2}_{\rm piecewise}(\Omega)} ,
\end{align}
where $\dot H^{2m+2}_{\rm piecewise}(\Omega)=\big\{ g \in L^2(\Omega): g|_{\Omega_j}\in \dot H^{2m+2}(\Omega_j)\,\,\mbox{for}\,\, j=1,2\big\}$.
This shows that the singular part in \eqref{singular-part} can be approximated with high-order accuracy for piecewise smooth initial data.

The error estimate in \eqref{high-order-qj} can be proved by using the following result (for isoparametric finite elements of degree $2m+1$ fitting the interface):
\begin{align}\label{Error-interface}
\| A_h^{-1}f - A^{-1} f \|_{L^2} \le Ch^{2m+2} \|f\|_{\dot H^{2m+2}_{\rm piecewise}(\Omega)},
\end{align}
which was originally proved in \cite{Li-Melenk-2010} for a bounded smooth domain $\Omega$ which contains the interface $\Gamma$. If $\Omega$ is a polygon which contains the interface $\Gamma$, then the interface is away from the corners of the polygon (therefore the functions in $\dot H^{2r+2}_{\rm piecewise}(\Omega)$ are locally in the classical Sobolev space $H^{2r+2}$ near the interface), it follows that the error estimates in \cite{Li-Melenk-2010} near the interface still hold for functions in $\dot H^{2r+2}_{\rm piecewise}(\Omega)$. Therefore, the combination of Proposition \ref{proposition:FEM} and the error estimates in \cite{Li-Melenk-2010} yields \eqref{Error-interface} for the triangulations satisfying \eqref{mesh-condition}.
Since
$$
\|q_{k}\|_{\dot H^{2m+2}_{\rm piecewise}(\Omega)}
=\|A^{-1}q_{k-1}\|_{\dot H^{2m+2}_{\rm piecewise}(\Omega)}
\le C\|q_{k-1}\|_{\dot H^{2m}_{\rm piecewise}(\Omega)} ,
$$
iterating this inequality yields that $\|q_{k}\|_{\dot H^{2m+2}_{\rm piecewise}(\Omega)}\le C\|u^0\|_{H^{2m+2}_{\rm piecewise}(\Omega)}$. By using this regularity and \eqref{Error-interface}, we have
\begin{align*}
\| q_{k,h} - q_k \|_{L^2}
&= \| A_h^{-1} q_{k-1,h} - A^{-1} q_{k-1} \|_{L^2} \\
&\le \| A_h^{-1} (q_{k-1,h} -P_hq_{k-1}) \|_{L^2} + \| A_h^{-1}P_hq_{k-1}  - A^{-1} q_{k-1}  \|_{L^2} \\
&\le \| q_{k-1,h} - P_hq_{k-1} \|_{L^2} + \| A_h^{-1}P_hq_{k-1}  - A^{-1} q_{k-1}  \|_{L^2} \\
&\le \| q_{k-1,h} - q_{k-1} \|_{L^2} + \| q_{k-1} - P_hq_{k-1} \|_{L^2} + \| A_h^{-1}P_hq_{k-1}  - A^{-1} q_{k-1}  \|_{L^2} \\
&\le \| q_{k-1,h} - q_{k-1} \|_{L^2} + Ch^{2m+2} \|q_{k-1}\|_{\dot H^{2m+2}_{\rm piecewise}(\Omega)} + Ch^{2m+2} \|q_{k-1}\|_{\dot H^{2m+2}_{\rm piecewise}(\Omega)} \\
&\le \| q_{k-1,h} - q_{k-1} \|_{L^2} + Ch^{2m+2} \|u^0\|_{H^{2m+2}_{\rm piecewise}(\Omega)}.
\end{align*}
By iterating this inequality for $k=1,\dots,j$, and using the following basic result:
$$ \| q_{0,h} - q_{0} \|_{L^2} = \| P_hu^0  -u^0\|_{L^2}\le Ch^{2m+2}\|u^0\|_{H^{2m+2}_{\rm piecewise}(\Omega)} , $$
we obtain the high-order convergence in \eqref{high-order-qj}.

\end{example}

\begin{example}[Dirac--Delta point source]  \label{Example-2}
\upshape

If the initial value is a Dirac--Delta point source centered at some interior point $x_0\in\Omega$, i.e., $u^0=\delta_{x_0}$, then the function
$$w_1 = A^{-1}u^0 - \hat q_1 , \quad\mbox{with}\quad \hat q_1(x)=\frac{1}{2\pi}\ln|x-x_0|, $$
is the solution of the following boundary value problem:
\begin{align}\label{PDE-w1}
\left\{\begin{aligned}
-\Delta w_1 &= 0 &&\mbox{in}\,\,\,\Omega,\\
w_1&= - \hat q_1    &&\mbox{for}\,\,\,x\in\partial\Omega,
\end{aligned}\right.
\end{align}
Let $\chi$ be a smooth cut-off function such that $\chi=1$ in a neighborhood of the boundary $\partial\Omega$ and $\chi=0$ in a neighborhood of $x_0$. Then $\chi \hat q_1\in C^\infty$ and
$$
\left\{\begin{aligned}
-\Delta(w_1-\chi \hat q_1) & = \Delta(\chi \hat q_1)\in C^\infty_0(\Omega) \subset \dot H^{2m}(\Omega) &&\mbox{in}\,\,\,\Omega,\\
w_1-\chi \hat q_1&= 0     &&\mbox{on}\,\,\,\partial\Omega.
\end{aligned}\right.
$$
This implies that $w_1-\chi \hat q_1 \in A^{-1}\dot H^{2m}(\Omega) = \dot H^{2m+2}(\Omega)$.
Since the explicit expression of $\Delta(\chi \hat q_1)$ is known, we can approximate $w_1-\chi \hat q_1$ by the finite element function $A_h^{-1}\Delta(\chi \hat q_1)$ and, correspondingly, approximate $q_1=A^{-1}u^0$ by
$q_{1,h}=\hat q_1 + \chi \hat q_1 + A_h^{-1}\Delta(\chi \hat q_1)$. The error of this approximation can be estimated as follows:
\begin{align*}
\|q_{1,h}-q_1\|_{L^2(\Omega)}
\le Ch^{2m+2}\|w_1-\chi\hat q_1\|_{\dot H^{2m+2}(\Omega)}
\le Ch^{2m+2} .
\end{align*}

Since $w_1$ is in $H^{2m+2}(\Omega)$, it follows that $A^{-1}w_1$ can be approximated by $A^{-1}_hw_1$ with an error bound of $O(h^{2m+2})$. Therefore, in order to compute $q_2=A^{-2}u^0=A^{-1}\hat q_1+A^{-1}w_1$ with an error bound of $O(h^{2m+2})$, it suffices to approximate $A^{-1}\hat q_1$ with the desired accuracy.
This can be done similarly as the approximation of $A^{-1}u^0$ by utilizing the following fact: The function
$$
\hat q_2(x) = -\frac{1}{2\pi}|x-x_0|^2\ln|x-x_0| + c|x-x_0|^2
$$
satisfies the equation $-\Delta \hat q_2 = \hat q_1$. Therefore, the function
$w_2 = A^{-1}\hat q_1 - \hat q_2$ is the solution of the following boundary value problem:
\begin{align}
\left\{\begin{aligned}
-\Delta w_2 &= 0 &&\mbox{in}\,\,\,\Omega,\\
w_2&= - \hat q_2    &&\mbox{for}\,\,\,x\in\partial\Omega ,
\end{aligned}\right.
\end{align}
which is in the same form as \eqref{PDE-w1}. Therefore, $A^{-1}\hat q_1$ can be computed with high-order accuracy similarly as the above-mentioned computation of $A^{-1}u^0$.
Repeating this process will yield high-order approximations to $q_j=A^{-j}u^0$ with the following error bound:
\begin{align*}
\|q_{j,h}-q_j\|_{L^2(\Omega)} \le Ch^{2m+2} .
\end{align*}

This shows that the singular part in \eqref{singular-part} can be approximated with high-order accuracy if the initial value is a Dirac--Delta point source.
\end{example}

\begin{example}[Dirac measure concentrated on an interface] \label{Example-3}
\upshape

If the initial value is a Dirac measure concentrated on an oriented interface $\Gamma\subset\Omega$, i.e., $u^0=\delta_{\Gamma}$ with
$$  \langle u^0 , v\rangle =  \langle \delta_{\Gamma} , v\rangle := \int_\Gamma v \, \d s \qquad \text{for all}~~ v \in \dH{\frac12+\mu},~\mu>0.$$
Then the function $A^{-1}u^0$ can be approximated by $A_h^{-1}P_hu^0$ with error of $O(h^{2m+2})$ in $L^2(\Omega)$ by using a locally refined mesh towards the interface $\Gamma$;  see e.g., \cite[Theorem 4.8]{LiWanYinZhao:2021}.
Similarly,
\begin{align*}
\|A^{-2}u^0 - A_h^{-1}P_hu^0\|_{L^2}
&\le
\|A^{-2}u^0 - A_h^{-1}P_hA^{-1}u^0\|_{L^2}
+ \|A_h^{-1}P_h(A^{-1}u^0-A_h^{-1}P_hu^0)\|_{L^2} \\
&\le
\|(A^{-1} - A_h^{-1}P_h)A^{-1}u^0\|_{L^2}
+ Ch^{2m+2} .
\end{align*}
Since $A^{-1}u^0$ is more regular than $u^0=\delta_\Gamma$, the locally refined mesh also yields optimal-order approximation
$$
\|(A^{-1} - A_h^{-1}P_h)A^{-1}u^0\|_{L^2} \le Ch^{2m+2}.
$$
The approximation to $A^{-j}u^0$ is similar. The details are omitted.

Therefore, the singular part in \eqref{singular-part} can be approximated with high-order accuracy if the initial value is a Dirac measure concentrated on an interface.
\end{example}

\begin{example}[Nonsmooth source term] \label{Example-4}
\upshape

We consider the subdiffusion problem with an inhomogeneous source term $g(t)f(x)$ where $g$ and $f$ are respectively temporally and spatially dependent functions, i.e.,
\begin{equation}\label{eqn:inhom}
\Dal u(t) + A u(t) = g(t)f \quad 0<t\le T,\quad \text{with}~~u(0)=0.
\end{equation}
By means of Laplace transform, the solution to \eqref{eqn:inhom} could be represented by
\begin{equation*}
 u(t) = \frac{1}{2\pi\mathrm{i}}\int_{\Gamma_{\theta,\kappa}} e^{zt}\hat g(z) (z^\alpha + A)^{-1} f \,\d z,
\end{equation*}
where $\hat g(z)$ denotes the Laplace transform of $g$.
Using the identity \eqref{identity}, we have the splitting
\begin{equation}\label{eqn:split-inhom}
  \begin{aligned}
    u(t) = \sum_{j=1}^m u_j^s(t) + u^r(t)
\end{aligned}
\end{equation}
where
\begin{equation*}
  \begin{aligned}
    &u_j^s(t) = \sum_{j=0}^{m-1} \frac{(-1)^{j}}{2\pi\mathrm{i}} A^{-(j+1)} f \int_{\Gamma_{\theta,\kappa}}
    e^{zt} \hat g(z) z^{j\alpha}\,\d z =:  \sum_{j=0}^{m-1} \Big((-1)^{j} A^{-(j+1)} f\Big) G_j (t)\\[5pt]
    &u^r(t) = \frac{(-1)^{m}}{2\pi\rm{i}} \int_{\Gamma_{\theta,\kappa}} e^{zt} z^{m\alpha} \hat g(z) (z^\alpha+A)^{-1}A^{-m} f \,\d z.
\end{aligned}
\end{equation*}
Next, we briefly introduce the approximation to $u^r(t)$. Using the argument in Section \ref{section:FEM},
we apply the semidiscrete finite element method:
\begin{equation*}
  \begin{aligned}
    u_h^r(t) = \frac{(-1)^{m}}{2\pi\rm{i}} \int_{\Gamma_{\theta,\kappa}} e^{zt} z^{m\alpha} \hat g(z) (z^\alpha+A_h)^{-1}A_h^{-m} P_h f \,\d z.
\end{aligned}
\end{equation*}
By assuming that $g \in C^{\lfloor m\alpha \rfloor + 1}[0,T]$, then the Taylor expansion and Lemmas \ref{lem:wsigma} and \ref{lem:wbound} imply the following error estimate
\begin{equation*}
  \begin{aligned}
    \|  u^r(t) - u_h^r(t) \|_{L^2\II} \le  c h^{2m+2} \|  f \|_{L^2\II} \Big(\sum_{\ell=0}^{\lfloor m\alpha \rfloor} |g^{(\ell)}(0)| t^{-m\alpha+\ell}
+\int_0^t |g^{\lfloor m\alpha \rfloor + 1}(t-s)|s^{-m\alpha+\lfloor m\alpha \rfloor}\,\d s\Big)
\end{aligned}
\end{equation*}
We then apply convolution quadrature to discretize in the time variable.
Let $\delta(\cdot)$ be the generating function of BDF$k$
method defined in \eqref{eqn:gen-fun-bdf}. By assuming that $g\in C^{K}[0,T]$ with $K=\lfloor (m-1)\alpha\rfloor + k$,
we apply the Taylor expansion to derive
\begin{equation*}
  \begin{aligned}
    u_h^r(t) & = \sum_{\ell=0}^{K} \frac{(-1)^{m}[ g^{(\ell)}(0)]}{2\pi\rm{i}} \int_{\Gamma_{\theta,\kappa}} e^{zt} z^{m\alpha - \ell} (z^\alpha+A_h)^{-1}  A_h^{-m} P_h f \,\d z \\
   &\qquad +  (-1)^{m}\frac{1}{2\pi\rm{i}} \int_{\Gamma_{\theta,\kappa}} e^{zt} z^{m\alpha} \widehat R_K(z) (z^\alpha+A_h)^{-1}  A_h^{-m} P_h f \,\d z
\end{aligned}
\end{equation*}
where $R_K(t) = \frac{t^K}{K!}*g^{(K+1)}$ denotes the remainder of the Taylor series.
Then we consider the time stepping approximation by convolution quadrature:
\begin{equation}\label{eqn:BDF-f}
  \begin{aligned}
    U_h^{r,n} & = \sum_{\ell=0}^{K} \frac{(-1)^{m}[ g^{(\ell)}(0)]}{2\pi\rm{i}} \int_{\Gamma_{\theta,\kappa}^\tau} e^{zt_n} \delta_\tau(e^{-z\tau})^{m\alpha - \ell} (\delta_\tau(e^{-z\tau})^\alpha+A_h)^{-1}  A_h^{-m} P_h f \,\d z \\
   &\qquad +  (-1)^{m} \frac{1}{2\pi\rm{i}} \int_{\Gamma_{\theta,\kappa}^\tau} e^{zt}
   \delta_\tau(e^{-z\tau})^{m\alpha} \widetilde R_K(\delta_\tau(e^{-z\tau})) (\delta_\tau(e^{-z\tau})^\alpha+A_h)^{-1}  A_h^{-m} P_h f \,\d z
\end{aligned}
\end{equation}
where $\widetilde R_K(\xi) =  \sum_{\ell=0}^\infty R_K(t_\ell) \xi^\ell$. Note that the fully discrete scheme could be solved via
a time stepping manner. Then using the argument in Section \ref{sec:time}, we have the error estimate
\begin{equation*}
  \begin{aligned}
   \| u_h^r(t_n)  -  U_h^{r,n} \|_{L^2\II} \le c \tau^k \Big( \sum_{\ell=0}^{K} |g^{(\ell)}(0)| t_n^{\ell-k-(m-1)\alpha}+
   \int_0^t |g^{(K+1)}(s)| (t-s)^{\ell-k-(m-1)\alpha} \,\d s\Big) \| f \|_{L^2\II}.
\end{aligned}
\end{equation*}

Similarly, we can approximate the function $G_j (t)$ in $u^s_j(t)$ by
using convolution quadrature generated by BDF$k$. Then we only need to solve an elliptic problem $A^{-(j+1)}f$ in $u^s_j(t)$ accurately,
see Example \ref{Example-1}-\ref{Example-3}.

Moreover, the above argument could be further generalized to the problem
\begin{equation}\label{eqn:inhom-m}
\Dal u(t) + A u(t) = \sum_{i=1}^{B} g_i(t) f_i \quad 0<t\le T,\quad \text{with}~~u(0)=0,
\end{equation}
where $g_i$ and $f_i$ are respectively temporally and spatially dependent functions for all $i=1,\ldots,B$.
\end{example}


\section{Numerical experiments}\label{sec:numerics}
In the section, we present numerical experiments to support the theoretical analysis and to illustrate the high-order convergence of the proposed method for nonsmooth initial data.
Throughout, we consider a two-dimensional  subdiffusion model \eqref{eqn:fde} in a unit square domain $\Omega=(0,1)^2\subset \mathbb{R}^2$.
In our computation, the spatial mesh size be $h_j =  h_0/2^j$, and step size be $ \tau_j =  \tau^0/2^j$,
where $h_0$ and $\tau^0$ will be specified later. 
The errors are computed by the Cauchy difference
\begin{equation}
    \begin{aligned}
    E_{h_j}=  \|u_{\tau_{\rm ref}, h_j} - u_{\tau_{\rm ref}, h_{j+1}}\|_{L^2(\Omega)}, \qquad
    E_{\tau_j} =  \|u_{\tau_j, h_{\rm ref}} - u_{\tau_{j+1}, h_{\rm ref}}\|_{L^2(\Omega)},
    \end{aligned}
\end{equation}
and the convergence orders are computed by using the following formulae:
\begin{equation}
    \begin{aligned}
    \text{\rm spatial convergence order} &=  -({\log(E_{h_{j+1}}) - \log(E_{h_j})})/{\log 2}, \\
    \text{\rm temporal convergence order} &=  -({\log(E_{\tau_{j+1}}) - \log(\tau_{h_j})})/{\log 2}.\\
    \end{aligned}
\end{equation}
Let $r$ be the degree of finite elements in the spatial discretization, and $k$ be the order of the time-stepping method.
We illustrate the convergence of the time discretization for $k=1,2,3,4$ by fixing $m=1$ and $r=3$, and illustrate the convergence of the spatial discretization with different $m$ ($m=0,1$) and $r$ ($r=1,2,3$) by fixing $k=4$.
All the examples are performed by Firedrake \cite{Rathgeber2017Firedrake}, and the meshes are generated by Gmsh \cite{Geuzaine2009Gmsh}.\vskip10pt

\begin{example}[Dirac delta initial value]\label{example61}
\rm
In the first example, we test the very weak initial condition $u^0 = \delta_{x_0}$, where $\delta_{x_0}$ denotes the
Dirac delta measure concentrated at the single point $x_0 = (0.5+\epsilon, 0.5+\epsilon)$ with $\epsilon=10^{-4}$.
Here, a perturbation is given to move the source away from the vertex of the meshes.

To test the temporal convergence order of the fully discrete solution \eqref{eqn:fully} for different $k$,
we set $\tau_j = \tau^0/2^j$ with $\tau_0 = 1/32$ and a fixed spatial meshes $h_{\rm ref} = 1/512$.
The results of the $L^2$-errors are presented in  Figure \ref{fig:exa52_time}, and confirm $k$th-order convergence for the BDF$k$ method.

\begin{figure}[h!]
    \centering
    \includegraphics[width=.50\textwidth]{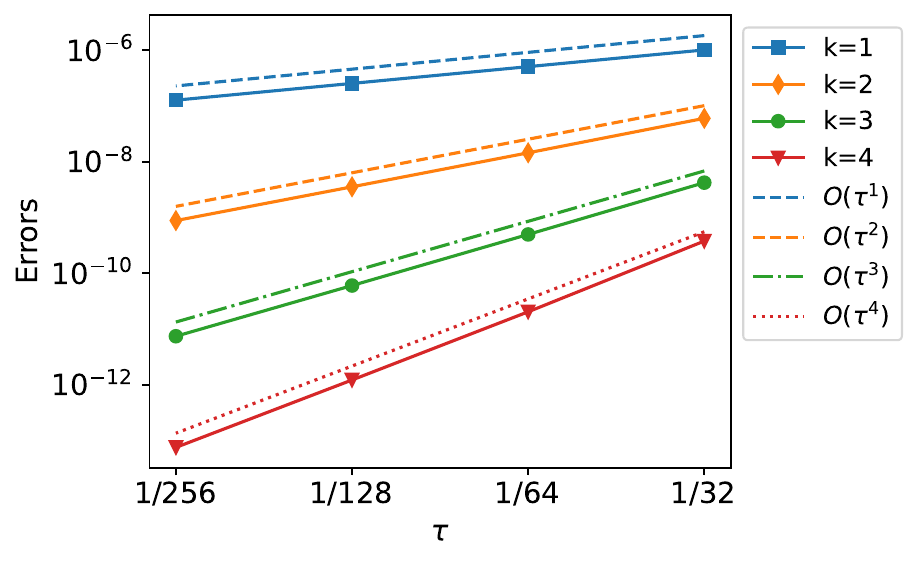}
    \caption{Example \ref{example61}: Errors of time discretization with $\alpha=0.6$, $m=1$, $r=3$.
    The dashed lines are $O(\tau^k)$.}
    \label{fig:exa52_time}
\end{figure}

To test the high-order convergence in space, we set $h_j=h_0/2^j$ with $h_0 = 1/16$ and $\tau_{\rm ref} = 1/1024$.
The meshes are refined by subdividing the triangles into four congruent sub-triangles (cf. Figure \ref{fig:exa52_space_meshes}).
In Table \ref{table:exa52_space_m0}, we test convergence the standard Galerkin finite element method, i.e. $m=0$,
using piecewise $r$-th order polynomials,
for both subdiffusion equation ($\alpha=0.6$) and normal diffusion equation $(\alpha=1)$.
The empirical convergence  for the fractional subdiffusion equation is always first-order, while that for the normal diffusion equation is of order $r+1$.
This interesting phenomenon is attributed to the infinite smoothing effect of the normal diffusion and the limited smoothing property of the fractional subdiffusion, cf. \eqref{eqn:reg-lim}.
Note that the Dirac delta function is in $\dH{-1-\mu}$ with any $\mu > 0$ and hence the solution to the subdiffusion equation \eqref{eqn:fde} belongs to $\dH{1-\mu}$.
In order to improve the convergence, we apply the splitting strategy \eqref{eqn:split-2} with $m=1$, approximate the regular part using the fully discrete scheme \eqref{eqn:fully-reg-2},
and compute the singular part using the method provided in Section \ref{Example-2}.
In Table \ref{table:exa52_space_m1}, we present the errors for both the regular part and the singular part.
The convergence for the regular part could be improved to $O(h^{\min(3,r+1)})$ and the singular part could be approximated with order $O(h^{\min(4, r+1)})$.
This convergence could be further improved by splitting one more singular term.
See Table \ref{table:exa52_space_m2} for the approximation with $m=2$.
These results fully supports our theoretical findings and the necessity of the proposed splitting method.


\begin{figure}[h!]
    \centering
    \subfigure[$j=0$]{\includegraphics[width=.31\textwidth]{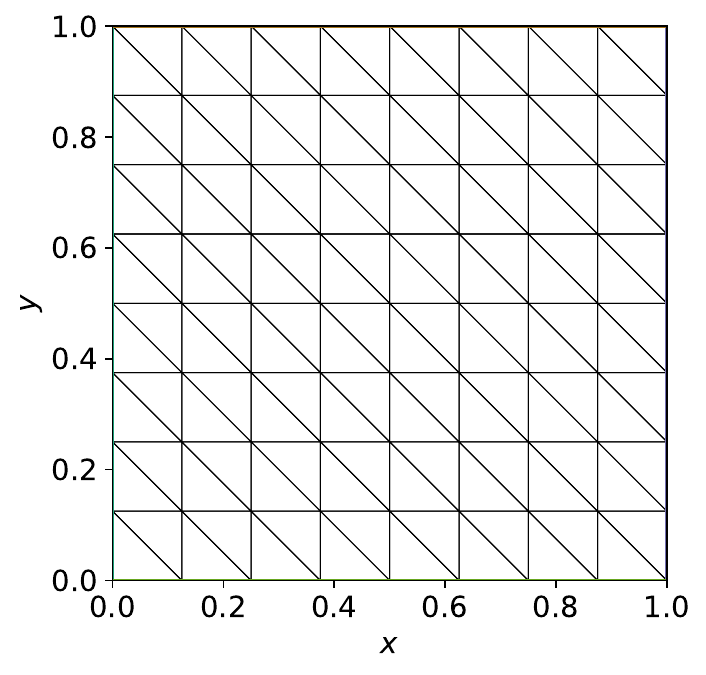}}
    \subfigure[$j=1$]{\includegraphics[width=.31\textwidth]{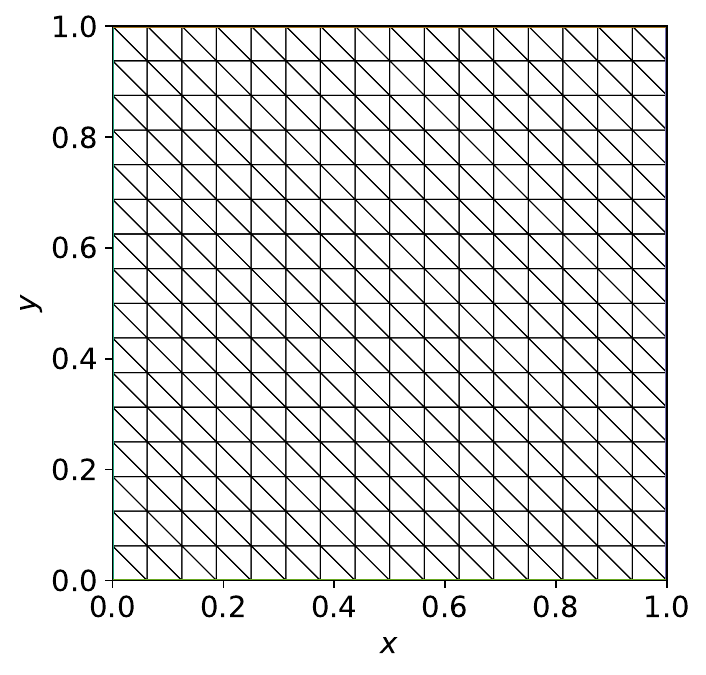}}
    \subfigure[$j=2$]{\includegraphics[width=.31\textwidth]{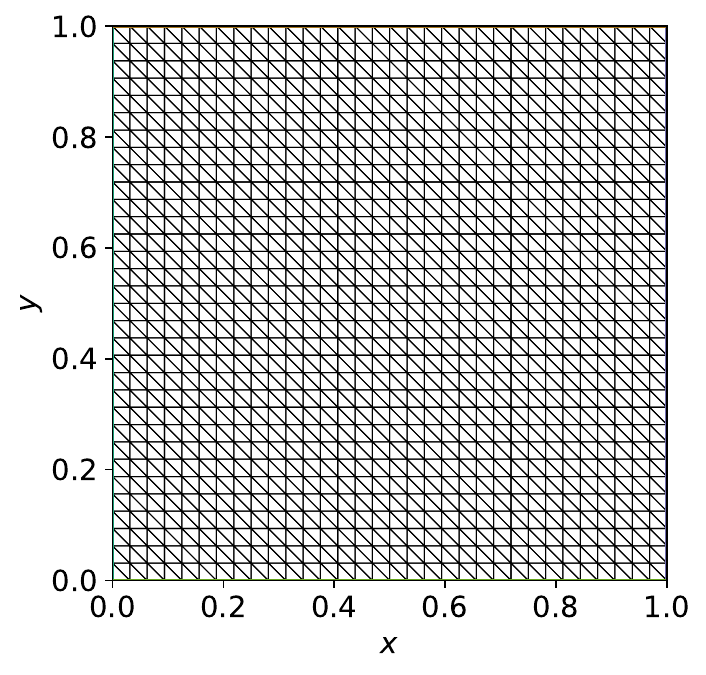}}
    \caption{The meshes for Example \ref{example61}.}
    \label{fig:exa52_space_meshes}
\end{figure}

\begin{table}[h!]
\centering\scriptsize
\caption{Example \ref{example61}: comparison of $\alpha=0.6$ and $\alpha=1$, with $m=0$.}\label{table:exa52_space_m0}
\vskip-5pt
\begin{tabular}{ccccccccccccc}
    \toprule
             &\multicolumn{6}{c}{$\alpha=0.6$}
             &\multicolumn{6}{c}{$\alpha=1$}
    \\\cmidrule(lr){2-7}\cmidrule(lr){8-13}
             &\multicolumn{2}{c}{$r=1$}
             &\multicolumn{2}{c}{$r=2$}
             &\multicolumn{2}{c}{$r=3$}
             &\multicolumn{2}{c}{$r=1$}
             &\multicolumn{2}{c}{$r=2$}
             &\multicolumn{2}{c}{$r=3$}
    \\\cmidrule(lr){2-3}\cmidrule(lr){4-5}\cmidrule(lr){6-7}\cmidrule(lr){8-9}\cmidrule(lr){10-11}\cmidrule(lr){12-13}
        $h_j$ & $E_{h_j}$ &  conv. & $E_{h_j}$ &  conv. & $E_{h_j}$ &  conv. & $E_{h_j}$ &  conv. & $E_{h_j}$ &  conv. & $E_{h_j}$ &  conv.\\
    \midrule
     1/32 &  1.59e-04 &    -   &  1.02e-04 &    -   &  6.60e-05 &    -   &  1.79e-10 &    -   &  1.22e-13 &    -   &  1.40e-15 &    -  \\
     1/64 &  7.86e-05 &   1.02 &  4.92e-05 &   1.05 &  3.06e-05 &   1.11 &  4.58e-11 &   1.97 &  1.25e-14 &   3.28 &  8.40e-17 &   4.06\\
    1/128 &  3.86e-05 &   1.03 &  2.30e-05 &   1.10 &  1.36e-05 &   1.18 &  1.15e-11 &   1.99 &  1.46e-15 &   3.09 &  4.87e-18 &   4.11\\
    1/256 &  1.87e-05 &   1.04 &  1.04e-05 &   1.15 &  6.32e-06 &   1.10 &  2.89e-12 &   2.00 &  1.80e-16 &   3.03 &  2.80e-19 &   4.12\\
    \cmidrule{1-13}
    \multicolumn{2}{c}{theor. conv.}
                      &   1.00 &           &   1.00 &           &   1.00 &           &   2.00 &           &   3.00 &           &   4.00\\
    \bottomrule
\end{tabular}
\end{table}
\begin{table}[h!]
\centering\scriptsize
\caption{Example \ref{example61}: improved spatial convergence using \eqref{eqn:fully} with $m=1$.}\label{table:exa52_space_m1}
\vskip-5pt
\begin{tabular}{ccccccccccccc}
    \toprule
    \multirow{3}{*}{$\alpha=0.6$}
             &\multicolumn{6}{c}{regular part}
             &\multicolumn{6}{c}{singular part}
    \\\cmidrule(lr){2-7}\cmidrule(lr){8-13}
             &\multicolumn{2}{c}{$r=1$}
             &\multicolumn{2}{c}{$r=2$}
             &\multicolumn{2}{c}{$r=3$}
             &\multicolumn{2}{c}{$r=1$}
             &\multicolumn{2}{c}{$r=2$}
             &\multicolumn{2}{c}{$r=3$}
    \\\cmidrule(lr){2-3}\cmidrule(lr){4-5}\cmidrule(lr){6-7}\cmidrule(lr){8-9}\cmidrule(lr){10-11}\cmidrule(lr){12-13}
        $h_j$ & $E_{h_j}$ &  conv. & $E_{h_j}$ &  conv. & $E_{h_j}$ &  conv. & $E_{h_j}$ &  conv. & $E_{h_j}$ &  conv. & $E_{h_j}$ &  conv.\\
    \midrule
         1/32 &  3.06e-06 &    -   &  2.61e-08 &    -   &  2.53e-09 &    -   &  2.89e-04 &    -   &  3.92e-05 &    -   &  7.85e-06 &    -  \\
         1/64 &  7.71e-07 &   1.99 &  3.54e-09 &   2.88 &  3.16e-10 &   3.00 &  8.45e-05 &   1.77 &  5.65e-06 &   2.80 &  5.51e-07 &   3.83\\
        1/128 &  1.95e-07 &   1.98 &  4.76e-10 &   2.89 &  4.00e-11 &   2.98 &  2.24e-05 &   1.92 &  7.32e-07 &   2.95 &  3.39e-08 &   4.02\\
        1/256 &  4.97e-08 &   1.97 &  6.42e-11 &   2.89 &  5.33e-12 &   2.91 &  5.68e-06 &   1.98 &  9.25e-08 &   2.98 &  2.09e-09 &   4.02\\
    \cmidrule{1-13}
    \multicolumn{2}{c}{theor. conv.}
                          &   2.00 &           &   3.00 &           &   3.00 &           &   2.00 &           &   3.00 &           &   4.00\\
    \bottomrule
\end{tabular}
\end{table}
\begin{table}[h!]
\centering\scriptsize
\caption{Example \ref{example61}: improved spatial convergence using \eqref{eqn:fully} with $m=2$.} \label{table:exa52_space_m2}
\vskip-5pt
\begin{tabular}{ccccccccccccc}
    \toprule
    \multirow{3}{*}{$\alpha=0.6$}
             &\multicolumn{6}{c}{regular part}
             &\multicolumn{6}{c}{singular part}
    \\\cmidrule(lr){2-7}\cmidrule(lr){8-13}
             &\multicolumn{2}{c}{$r=1$}
             &\multicolumn{2}{c}{$r=2$}
             &\multicolumn{2}{c}{$r=3$}
             &\multicolumn{2}{c}{$r=1$}
             &\multicolumn{2}{c}{$r=2$}
             &\multicolumn{2}{c}{$r=3$}
    \\\cmidrule(lr){2-3}\cmidrule(lr){4-5}\cmidrule(lr){6-7}\cmidrule(lr){8-9}\cmidrule(lr){10-11}\cmidrule(lr){12-13}
        $h_j$ & $E_{h_j}$ &  conv. & $E_{h_j}$ &  conv. & $E_{h_j}$ &  conv. & $E_{h_j}$ &  conv. & $E_{h_j}$ &  conv. & $E_{h_j}$ &  conv.\\
    \midrule
         1/32 &  5.69e-07 &    -   &  2.08e-09 &    -   &  3.29e-11 &    -   &  2.77e-04 &    -   &  3.77e-05 &    -   &  7.49e-06 &    -  \\
         1/64 &  1.43e-07 &   1.99 &  2.61e-10 &   3.00 &  1.98e-12 &   4.05 &  8.11e-05 &   1.77 &  5.41e-06 &   2.80 &  5.27e-07 &   3.83\\
        1/128 &  3.60e-08 &   1.99 &  3.26e-11 &   3.00 &  1.14e-13 &   4.12 &  2.14e-05 &   1.92 &  7.01e-07 &   2.95 &  3.25e-08 &   4.02\\
        1/256 &  9.13e-09 &   1.98 &  4.08e-12 &   3.00 &  7.10e-15 &   4.00 &  5.44e-06 &   1.98 &  8.86e-08 &   2.98 &  2.00e-09 &   4.02\\
    \cmidrule{1-13}
    \multicolumn{2}{c}{theor. conv.}
                          &   2.00 &           &   3.00 &           &   4.00 &           &   2.00 &           &   3.00 &           &   4.00\\
    \bottomrule
\end{tabular}
\end{table}

\FloatBarrier

\end{example}

\begin{example}[Dirac measure concentrated on an interface]\label{example62}

\rm
In the second example, we test the initial condition $u^0 = \delta_\Gamma$, where
$\delta_\Gamma$ denotes the Dirac measure concentrated on an oriented interface
$\Gamma = \overrightarrow{x_1x_2}$ with $x_1=(0.25, 0.75)$, $x_2=(0.75. 0.5)$, cf. Figure \ref{fig:exa53_domain} (a).
\begin{figure}[h!]
    \centering
    \subfigure[Line segment $\Gamma$]{\includegraphics[width=.32\textwidth]{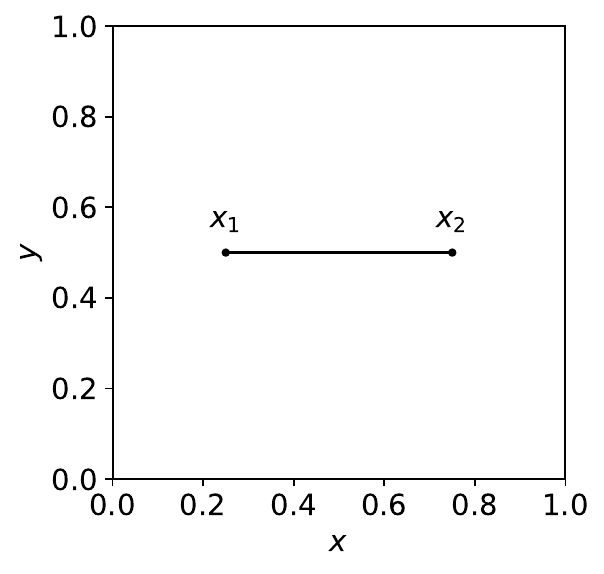}}
    \subfigure[$r=2,j=1$]{\includegraphics[width=.32\textwidth]{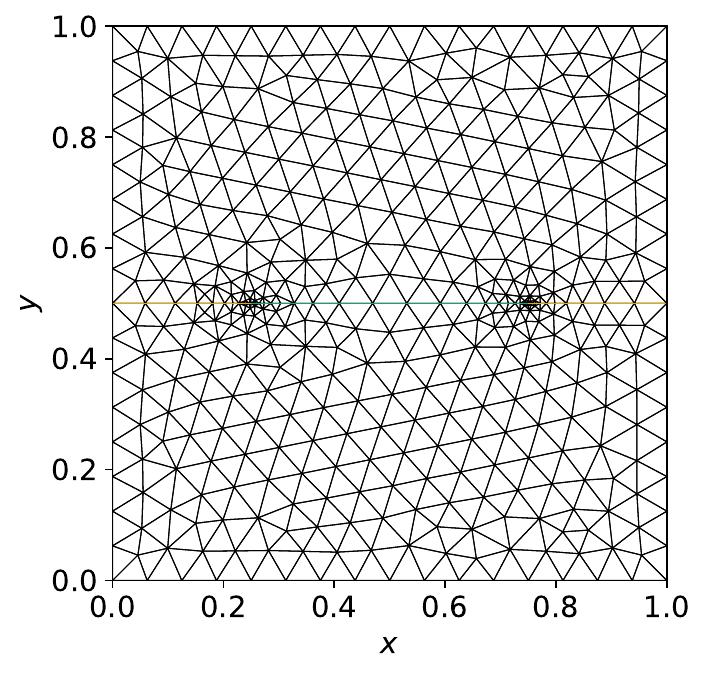}}
    \subfigure[$r=2,j=2$]{\includegraphics[width=.32\textwidth]{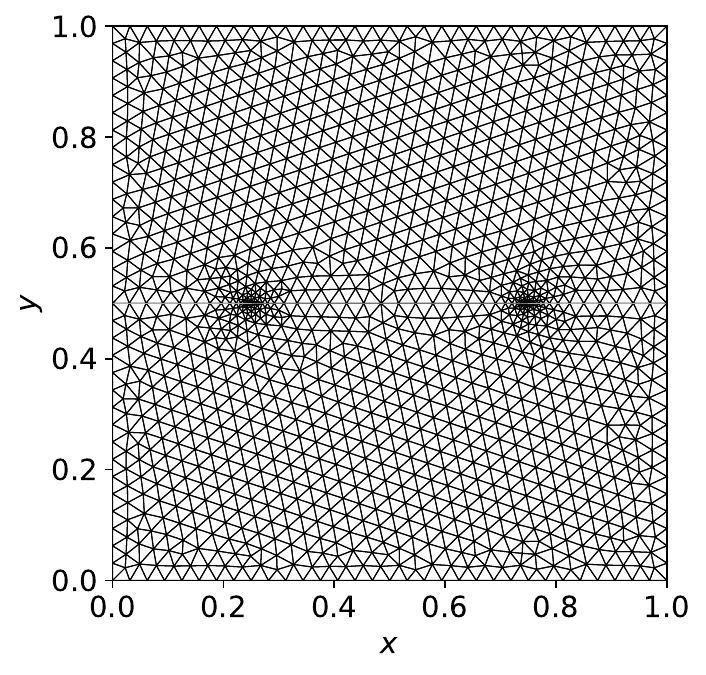}}
    \caption{Example \ref{example62}: Line segment $\Gamma$ and the graded mesh for the singular part.}\label{fig:exa53_domain}
\end{figure}

In order to reduce the computational cost,
we use quasi-uniform meshes and locally graded meshes for the time-dependent
regular part and the steady singular part, respectively.
To generate the quasi-uniform meshes, we generate the initial mesh with mesh size $h_0=1/8$ by Gmsh, and refine the mesh
several times to reach the mesh size $h_j=h_0/2^j$.
For the singular part, we generate the $j$th-level graded meshes with the local cell
diameter $\hbar(x)$ in sub-domain $B(x_i, d_0)$ as
\begin{equation}
    \hbar(x) \backsim \left\{
        \begin{aligned}
            &|x - x_i|^{1-\gamma} h_j,  &&\text{for}\quad h_\star \le |x-x_i| \le d_0, \\
            &h_*,  &&\text{for}\quad |x-x_i| \le h_* \backsim h_j^{1/\gamma}.\\
        \end{aligned}
    \right.
\end{equation}
where $\gamma \in (0, 1/r)$.
The graded mesh for approximating the singular part are presented in Figure \ref{fig:exa53_domain} (b) and (c).
Note that the refinement strategy used here is different from the method proposed in \cite{LiWanYinZhao:2021}, where the local mesh size for the $j$th-level graded meshes
in the neighborhood of $x_0$ and $x_1$ is
\begin{equation}
    \hbar(x) \backsim \left\{
        \begin{aligned}
            &|x - x_i|(1-c_p) h_j,  &&\text{for}\quad h_\star \le |x-x_i| \le d_0, \\
            &h_\star,  &&\text{for}\quad |x-x_i| \le h_\star \backsim \kappa_p^jh_j,\\
        \end{aligned}
    \right.
\end{equation}
where $c_p = 2^{-r/a}$ with  $a \in (0, 1)$ \cite[Algorithm 4.1]{LiWanYinZhao:2021}.
As proved in \cite[Theorem 3.8]{LiWanYinZhao:2021},
the solution of the Possion equation with line Dirac source belongs to weighted Sobolev space
\begin{equation}
    \mathcal{K}_{a+1}^{l+1}(B(x_i, d)\backslash\Gamma) := \{v: \rho^{|s| - a - 1} D^s v \in L^2(B(x_i, d)\backslash\Gamma), \forall |s| \le l + 1\}
\end{equation}
for any $l\ge 1$ and $a\in (0, 1)$ in the neighborhood of $x_1$ and $x_2$.
Though the refine methods given above are different,
both of them can resolve the singularity around the end point of $\Gamma$ and obtain optimal convergence order.


To test the temporal convergence order, we let the step sizes be $\tau_j = \tau_0/2^j$ with $\tau_0 = 1/32$ and fixed the spatial mesh size $h_{\rm ref} = h_6 = 1/512$.
As shown in Figure \ref{fig:exa53_time},
the convergence order of BDF$k$ scheme is $O(\tau^k)$, which agrees well with our theoretical result in Corollary \ref{cor:fully-err}.

To test the convergence in space, we first compare the numerical results of $\alpha=0.6$ and $\alpha=1$ using the standard finite element method (i.e., $m=0$) with uniform meshes.
As shown in Table \ref{table:exa53_space_m0},
the convergence for the fractional diffusion is at most second-order even if we use high-order finite element methods,
while the convergence order of the normal diffusion is $r+1$.
In order to improve the convergence, we apply the splitting method with $m=1$.
The empirical errors of regular part and singular part are presented in Table \ref{table:exa53_space_m1}.
Our numerical experiments indicate the optimal convergence rate for the $P^r$ finite element method with $r=2,3$.

\begin{figure}[h!]
    \centering
    \includegraphics[width=.50\textwidth]{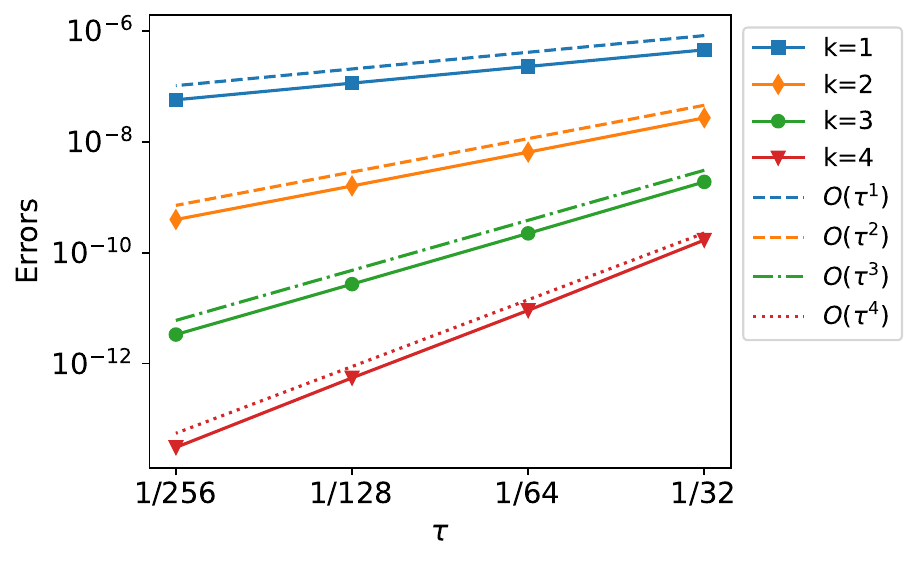}
    \caption{Example \ref{example62}: Errors of time discretization with $m=1$ and $ r=3$.}
    \label{fig:exa53_time}
\end{figure}

\begin{table}[h!]
\centering\scriptsize
\caption{Example \ref{example62}: Comparison of $\alpha=0.6$ and $\alpha=1$, with $m=0$.}\label{table:exa53_space_m0}
\vskip-5pt
\begin{tabular}{ccccccccccccc}
    \toprule
             &\multicolumn{6}{c}{$\alpha=0.6$}
             &\multicolumn{6}{c}{$\alpha=1$}
    \\\cmidrule(lr){2-7}\cmidrule(lr){8-13}
             &\multicolumn{2}{c}{$r=1$}
             &\multicolumn{2}{c}{$r=2$}
             &\multicolumn{2}{c}{$r=3$}
             &\multicolumn{2}{c}{$r=1$}
             &\multicolumn{2}{c}{$r=2$}
             &\multicolumn{2}{c}{$r=3$}
    \\\cmidrule(lr){2-3}\cmidrule(lr){4-5}\cmidrule(lr){6-7}\cmidrule(lr){8-9}\cmidrule(lr){10-11}\cmidrule(lr){12-13}
        $h_j$ & $E_{h_j}$ &  conv. & $E_{h_j}$ &  conv. & $E_{h_j}$ &  conv. & $E_{h_j}$ &  conv. & $E_{h_j}$ &  conv. & $E_{h_j}$ &  conv.\\
    \midrule
         1/32 &  3.35e-05 &    -   &  4.40e-06 &    -   &  1.42e-06 &    -   &  3.90e-11 &    -   &  2.35e-14 &    -   &  1.30e-16 &    -  \\
         1/64 &  8.99e-06 &   1.90 &  1.10e-06 &   2.00 &  3.56e-07 &   2.00 &  9.85e-12 &   1.98 &  2.71e-15 &   3.12 &  8.14e-18 &   4.00\\
        1/128 &  2.39e-06 &   1.91 &  2.75e-07 &   2.00 &  8.91e-08 &   2.00 &  2.47e-12 &   2.00 &  3.31e-16 &   3.03 &  5.09e-19 &   4.00\\
        1/256 &  6.31e-07 &   1.92 &  6.88e-08 &   2.00 &  2.23e-08 &   2.00 &  6.18e-13 &   2.00 &  4.12e-17 &   3.01 &  3.18e-20 &   4.00\\
    \cmidrule{1-13}
    \multicolumn{2}{c}{theor. conv.}
                          &   2.00 &           &   2.00 &           &   2.00 &           &   2.00 &           &   3.00 &           &   4.00\\
    \bottomrule
\end{tabular}
\end{table}
\begin{table}[h!]
\centering\scriptsize
\caption{Example \ref{example62}: Improved spatial convergence by using \eqref{eqn:fully} with $m=1$.}
\label{table:exa53_space_m1}
\vskip-5pt
\begin{tabular}{ccccccccccccc}
    \toprule
    \multirow{3}{*}{$\alpha=0.6$}
             &\multicolumn{6}{c}{regular part}
             &\multicolumn{6}{c}{singular part}
    \\\cmidrule(lr){2-7}\cmidrule(lr){8-13}
             &\multicolumn{2}{c}{$r=1$}
             &\multicolumn{2}{c}{$r=2$}
             &\multicolumn{2}{c}{$r=3$}
             &\multicolumn{2}{c}{$r=1$}
             &\multicolumn{2}{c}{$r=2$}
             &\multicolumn{2}{c}{$r=3$}
    \\\cmidrule(lr){2-3}\cmidrule(lr){4-5}\cmidrule(lr){6-7}\cmidrule(lr){8-9}\cmidrule(lr){10-11}\cmidrule(lr){12-13}
        $h_j$ & $E_{h_j}$ &  conv. & $E_{h_j}$ &  conv. & $E_{h_j}$ &  conv. & $E_{h_j}$ &  conv. & $E_{h_j}$ &  conv. & $E_{h_j}$ &  conv.\\
    \midrule
         1/32 &  5.66e-07 &    -   &  3.95e-09 &    -   &  5.03e-11 &    -   &  3.33e-05 &    -   &  4.07e-07 &    -   &  1.15e-08 &    -  \\
         1/64 &  1.42e-07 &   2.00 &  4.96e-10 &   2.99 &  3.41e-12 &   3.88 &  8.97e-06 &   1.89 &  5.26e-08 &   2.95 &  7.82e-10 &   3.87\\
        1/128 &  3.55e-08 &   2.00 &  6.21e-11 &   3.00 &  2.29e-13 &   3.90 &  2.38e-06 &   1.91 &  6.46e-09 &   3.03 &  4.86e-11 &   4.01\\
        1/256 &  8.88e-09 &   2.00 &  7.77e-12 &   3.00 &  1.52e-14 &   3.91 &  5.16e-07 &   2.21 &  8.57e-10 &   2.92 &  3.04e-12 &   4.00\\
    \cmidrule{1-13}
    \multicolumn{2}{c}{theor. conv.}
                          &   2.00 &           &   3.00 &           &   4.00 &           &   2.00 &           &   3.00 &           &   4.00\\
    \bottomrule
\end{tabular}
\end{table}\bigskip



\end{example}

\section{Conclusions}

We have constructed a new splitting of the solution to the subdiffusion equation, which allows us to develop high-order finite element approximations in
case of nonsmooth initial data. In this method, the solution is split into a time-dependent smooth part plus a time-independent nonsmooth part. We have developed high-order spatial and time discretizations to approximate the smooth part of the solution, and proved that the proposed fully discrete finite element method approximates the regular part of the solution to high-order accuracy for nonsmooth initial data in $L^2(\Omega)$.
Moreover, we have illustrated how to approximate the time-independent nonsmooth part through several examples of initial data, including piecewise smooth initial data, Dirac--Delta point source, and Dirac measure concentrated on an interface. More generally, the time-independent nonsmooth part can be approximated by using smaller mesh size without increasing the overall computational cost significantly. This is possible as the nonsmooth part is time-independent and therefore avoids the time-stepping procedure.
We have also illustrated the effectiveness of the proposed method through several numerical examples.

\section*{Appendix: On the triangulation satisfying Assumption \ref{ass:mesh}}$\,$
\renewcommand{\theequation}{A.\arabic{equation}}
\renewcommand{\thelemma}{A.\arabic{lemma}}
\renewcommand{\theproposition}{A.\arabic{proposition}}
\setcounter{equation}{0}

In this Appendix we show that the graded mesh defined in \eqref{mesh-condition}, for a two-dimensional polygonal domain $\Omega$, satisfies Assumption \ref{ass:mesh}.

In terms of the notation introduced in \eqref{mesh-condition} and the subsequent text, we divide the domain $D_0=\{x\in\Omega:|x-x_0|<d_0\}$ into
$D_0= D_*\cup (\cup_{j=1}^JD_j)$ with
$$
D_j:=\{x\in\Omega: 2^{-j-1}d_0\le |x-x_0|<2^{-j}d_0\}
\quad\mbox{and}\quad
D_*:=\{x\in\Omega: |x-x_0|\le 2^{-J}d_0=h_*\}.
$$
Let $d_j=2^{-j}d_0$ and $h_j= \max\limits_{x\in D_j} h(x) \sim d_j^{1-\gamma}h$, and denote by
$$D_j':=\{x\in\Omega: 2^{-j-2}d_0\le |x-x_0|<2^{-j+1}d_0\} $$ a neighborhood of $D_j$.
Then the following lemma provides a regularity estimate near the corner.


\begin{lemma}\label{LemmaA}
If $A=-\Delta$ and $v\in \dot H^{2r+2}(\Omega)$ for some $r\ge 0$, then
\begin{align*}
\begin{aligned}
\|v\|_{H^{s}(D_j)}
\le
C d_j^{1-s+\min(1,\pi/\theta)}  \|v\|_{\dot H^{2r+2}(\Omega)}
\quad\mbox{for}\,\,\,
0\le s\le 2r+2 .
\end{aligned}
\end{align*}
\end{lemma}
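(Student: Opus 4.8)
\emph{Strategy.} The plan is to localize near the corner $x_0$, rescale $D_j$ onto a \emph{fixed} reference annular sector, reduce the estimate to a single bound on that sector, and then --- using the characterization of the spaces $\dH{2m+2}$ in terms of powers of $A$ recalled in Section~\ref{sec:reg} --- peel it down to its only genuinely corner-specific ingredient, the case $r=0$, which is proved directly and to which the case $r\ge1$ is reduced. Write $\beta:=\min(1,\pi/\theta)$; we may assume $d_j\le1$, the finitely many annuli with $d_j>1$ being absorbed into the constant via $\dH{2r+2}\hookrightarrow H^s\II$. Setting $y=(x-x_0)/d_j$ and $\hat v(y):=v(x_0+d_jy)$, the sets $D_j$ and $D_j'$ are mapped onto fixed reference sets $\hat D\Subset\hat D'$ lying at distance $\sim1$ from the origin uniformly in $j$, and a change of variables in the $H^s$-norm gives, for $0\le s\le 2r+2$,
\begin{equation*}
\|v\|_{H^s(D_j)}\le C\,d_j^{1-s}\,\|\hat v\|_{H^s(\hat D)}\le C\,d_j^{1-s}\,\|\hat v\|_{H^{2r+2}(\hat D)},
\end{equation*}
while $\|v\|_{\dH{2r+2}}\sim\|f\|_{L^2\II}$ with $f:=A^{r+1}v$. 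Hence it suffices to prove $\|\hat v\|_{H^{2r+2}(\hat D)}\le C\,d_j^{\beta}\,\|f\|_{L^2\II}$.

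\emph{Reduction to $r=0$.} On $\hat D'$ one has $-\Delta_y\,\widehat{A^i v}=d_j^{2}\,\widehat{A^{i+1} v}$ for $i=0,\dots,r$ (with $\widehat{A^0 v}=\hat v$, $\widehat{A^{r+1} v}=\hat f$), and $\widehat{A^i v}$ vanishes on the flat portions of $\partial\hat D'\cap\partial\hat\Omega$ since $A^i v\in H^1_0\II$ for $i\le r$. As $\hat D\Subset\hat D'$ stays away from the corner, iterating the standard interior/flat-boundary elliptic shift estimate along nested reference sub-domains and scaling the $L^2$-terms back to $D_j'$ gives, with a constant independent of $j$,
\begin{equation*}
\|\hat v\|_{H^{2r+2}(\hat D)}\le C\Big(d_j^{2r+1}\|f\|_{L^2\II}+\sum_{i=0}^{r}d_j^{2i-1}\|A^i v\|_{L^2(D_j')}\Big).
\end{equation*}
Covering $D_j'\subset D_{j-1}\cup D_j\cup D_{j+1}$ (with $d_{j\pm1}\sim d_j$) and applying the case $r=0$ to $w=A^i v\in\dH{2(r-i)+2}\subset\dH2$ gives $\|A^i v\|_{L^2(D_j')}\le C\,d_j^{1+\beta}\|A^{i+1} v\|_{L^2\II}\le C\,d_j^{1+\beta}\|f\|_{L^2\II}$, the last step using $\lambda_1>0$. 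Since $2i\ge0$ and $2r+1\ge1\ge\beta$ while $d_j\le1$, every summand is $\le C\,d_j^{\beta}\|f\|_{L^2\II}$, closing the reduction.

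\emph{The case $r=0$.} It remains to show: if $w\in\dH2$ and $g:=-\Delta w\in L^2\II$, then $\|w\|_{L^2(D_j)}\le C\,d_j^{1+\beta}\|g\|_{L^2\II}$. Put $x_0$ at the origin, so $S:=\Omega\cap\{|x|<2d_0\}$ is a plane sector of opening $\theta$ with straight sides on $\partial\Omega$ and arc $\{|x|=2d_0\}$ interior to $\Omega$. In polar coordinates $(\rho,\phi)$ expand in the angular Dirichlet eigenfunctions $e_k(\phi)=\sqrt{2/\theta}\,\sin(k\pi\phi/\theta)$, $\mu_k:=k\pi/\theta$, and split $w|_S=w^{\mathrm p}+w^{\mathrm h}$, where $w^{\mathrm p}=\sum_k u_k(\rho)e_k$ with $u_k$ the explicit variation-of-parameters particular solution of $-u_k''-\rho^{-1}u_k'+\mu_k^2\rho^{-2}u_k=g_k$ bounded at $\rho=0$, and $w^{\mathrm h}=w-w^{\mathrm p}=\sum_k a_k\rho^{\mu_k}e_k$ is harmonic in $S$, vanishes on the two sides, and matches $w-w^{\mathrm p}$ on the arc. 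Cauchy--Schwarz for the radial integrals yields $|u_k(\rho)|\le C\mu_k^{-1}\rho^{\min(1,\mu_k)}\|g_k\|$ for $\rho\le d_0$, where $\|g_k\|:=\|g_k\|_{L^2((0,2d_0),\,\rho\,\d\rho)}$; hence, using $\min(1,\mu_k)\ge\min(1,\mu_1)=\beta$, $\rho<d_j$ on $D_j$, and $\sum_k\|g_k\|^2=\|g\|_{L^2(S)}^2$,
\begin{equation*}
\|w^{\mathrm p}\|_{L^2(D_j)}^2\le C\sum_k\mu_k^{-2}\|g_k\|^2\,d_j^{2\min(1,\mu_k)+2}\le C\,d_j^{2\beta+2}\,\|g\|_{L^2\II}^2 .
\end{equation*}
For $w^{\mathrm h}$: on $D_j$ one has $(\rho/2d_0)^{\mu_k}\le(\rho/2d_0)^{\mu_1}\le C\,d_j^{\mu_1}$, while the arc data obeys $\|w^{\mathrm h}\|_{L^2(\{|x|=2d_0\})}\le C\|g\|_{L^2\II}$ (interior trace, together with $\|w\|_{\dH1}\le\lambda_1^{-1/2}\|g\|_{L^2\II}$ and the elementary bound on $u_k(2d_0)$); therefore $\|w^{\mathrm h}\|_{L^2(D_j)}\le C\,d_j^{1+\mu_1}\|g\|_{L^2\II}\le C\,d_j^{1+\beta}\|g\|_{L^2\II}$. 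Adding the two bounds proves the claim. (Here $\theta\ne\pi$, the case relevant for a polygon vertex, so $\mu_k\ne1$ for all $k$ and no logarithmic terms occur.)

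\emph{Main obstacle.} The crux is the case $r=0$, i.e.\ the \emph{sharp} decay $\|w\|_{L^2(D_j)}\lesssim d_j^{1+\min(1,\pi/\theta)}$: obtaining this exponent without loss forces the explicit radial/angular analysis above, since a soft weighted (Kondrat'ev-type) a priori estimate for $A^{-1}$ only yields $d_j^{1+\min(1,\pi/\theta)-\varepsilon}$ at a re-entrant corner, where $\pi/\theta$ sits exactly on the operator-pencil spectrum $\{k\pi/\theta\}_{k\ge1}$. (When $\Omega$ is convex, $\beta=1$, and one may instead invoke the weighted estimate $\|\, |x-x_0|^{-2}w \,\|_{L^2\II}\le C\|g\|_{L^2\II}$ for the Dirichlet Laplacian directly.) Everything else --- the rescaling, the elliptic shift estimates, and the $\dot H$-space bookkeeping --- is routine.
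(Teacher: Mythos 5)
Your proof is correct, but it reaches the estimate by a route that differs from the paper's in its corner-specific ingredient. The paper starts from a weighted local regularity inequality on the dyadic annuli (quoted from \cite{Li-2022}), controls the low-order terms $\|\nabla v\|_{L^2(D_j')}$ via the known singular expansion $v|_{D_0}\in O(|x-x_0|^{\pi/\theta})+H^2(D_0)$ with $|c|\lesssim\|\Delta v\|_{L^2(\Omega)}$, and then bootstraps the resulting annulus bound from $\Delta^{-l}f$ to $\Delta^{-l-1}f$ for $l=0,\dots,r$. You instead rescale each annulus to a fixed reference sectorial annulus, iterate the standard interior/flat-boundary shift estimate there (which, after unscaling, is essentially the paper's weighted inequality, so this part is equivalent in substance), and reduce everything to the single sharp decay bound $\|w\|_{L^2(D_j)}\le Cd_j^{1+\min(1,\pi/\theta)}\|\Delta w\|_{L^2(\Omega)}$ for $w\in\dot H^2(\Omega)$, which you then prove self-containedly by angular Fourier expansion in the sector (variation-of-parameters bound $|u_k(\rho)|\lesssim\mu_k^{-1}\rho^{\min(1,\mu_k)}\|g_k\|$ plus the $\rho^{\mu_1}$ decay of the harmonic remainder). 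What your version buys is independence from the cited corner-expansion literature, and it makes transparent why the exponent $1+\min(1,\pi/\theta)$ is attained even at convex corners, where a naive Sobolev/H\"older argument on the $H^2$ remainder would lose an $\varepsilon$; what it costs is the explicit sector computation, in which two routine points should still be recorded: the exclusion of the $\rho^{-\mu_k}$ modes in $w^{\mathrm h}$ (via finiteness of the weighted $H^1$ norms of the Fourier coefficients of $w$ and of $w^{\mathrm p}$ near $\rho=0$) and, if you want the stated range of all real $0\le s\le 2r+2$ rather than just integers, an interpolation step after the scaling inequality $\|v\|_{H^s(D_j)}\le Cd_j^{1-s}\|\hat v\|_{H^s(\hat D)}$. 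Your induction through powers of $A$ plays the same role as the paper's bootstrap through $\Delta^{-l}f$, so the overall dyadic-annulus-plus-elliptic-bootstrap skeleton coincides; only the treatment of the corner behavior is genuinely different.
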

\begin{proof}
For $v\in \dot H^{2r+2}(\Omega)$, the following weighted regularity result is known (for example, see \cite[Proof of Lemma 5.1, inequality (5.8)]{Li-2022}):
\begin{align}\label{local-energy}
|v|_{H^{k+1}(D_j)}
&\leq C \sum_{i=0}^{k-1} d_j^{-i}\|\Delta v\|_{H^{k-1-i}(D_j')}
+C d_j^{-k} \|\nabla v\|_{L^2(D_j')} \quad 1\le k\le 2r+1 .
\end{align}
By using the singular expansions and local energy estimate, i.e.,
$$ v|_{D_0} \in O( |x-x_0|^{\pi/\theta}) + H^2(D_0)
\quad\mbox{and}\quad
\|\nabla v\|_{L^2(D_j)}^2
\le Cd_j^{-2}\| v\|_{L^2(D_j')}^2
+Cd_j^2\|f\|_{L^2(D_j')}^2 ,
$$
we further obtain
\begin{align*}
|v|_{H^{k+1}(D_j)}  &\leq C \sum_{i=0}^{k-1} d_j^{-i}\|\Delta v\|_{H^{k-1-i}(D_j')}
+C d_j^{-k+\min(1,\pi/\theta)} \|f\|_{L^{2}(\Omega)}
\quad 1\le k\le 2r+1 .
\end{align*}
If $\|\Delta v\|_{H^{s}(D_j')} \le Cd_j^{-s+\min(1,\pi/\theta)}$ for $0\le s\le 2r$, then the inequality above yields
\begin{align*}
\begin{aligned}
|v|_{H^{s}(D_j)}
&\leq  C d_j^{1-s+\min(1,\pi/\theta)} ,\quad 2\le s\le 2r+2 .
\end{aligned}
\end{align*}
By using the singular expansions and local energy estimate in \eqref{local-energy}, we find that the inequality above also holds for $s=0,1$.
Since $d_j\le C$, it follows that $\| v\|_{H^{s}(D_j')} \le Cd_j^{-s+\min(1,\pi/\theta)}$ for $0\le s\le 2r+2$. This is the same estimate as that for $\Delta v$, but the range of $s$ increases by $2$. Therefore, for any function $f\in L^2(\Omega)$, by picking up the regularity from $\Delta^{-l}f$ to $\Delta^{-l-1}f$ for $l=0,1,\dots,r$, we obtain
\begin{align*}
\begin{aligned}
|\Delta^{-r-1}f|_{H^{s}(D_j)}
\le
C d_j^{1-s+\min(1,\pi/\theta)} \|f\|_{L^{2}(\Omega)}
\quad\mbox{for}\,\,\, \quad 0\le s\le 2r+2 .
\end{aligned}
\end{align*}
Replacing $\Delta^{-r-1}f$ by $v$ in the estimate above, we obtain the result of Lemma \ref{LemmaA}.
\end{proof}

Then the following proposition confirms the approximation properties \eqref{eqn:int-err}--\eqref{Error-solution-f}.
\begin{proposition} \label{proposition:FEM}
Let $A=-\Delta$.
If the diameter of triangles satisfies condition \eqref{mesh-condition} near every corner of the domain,
then \eqref{eqn:int-err}--\eqref{Error-solution-f} holds.
\end{proposition}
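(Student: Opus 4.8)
The plan is to localize the interpolation and projection errors to the corner neighbourhoods, reduce them to scaled local estimates on the dyadic annuli $D_j$, and then sum over the annuli using the weighted regularity of Lemma \ref{LemmaA} together with the restriction on the grading exponent $\gamma$. Fix a corner $x_0$ with interior angle $\theta$ and write $D_0=D_*\cup\bigcup_{j=1}^J D_j$ as in the preceding text, with $J\sim\gamma^{-1}\log(1/h)$ so that $h_*\sim 2^{-J}d_0\sim h^{1/\gamma}$. Away from all the corners the triangulation is quasi-uniform of mesh size $\sim h$, and by interior and boundary elliptic regularity any $v\in\dH{2r+2}$ lies locally in the classical space $H^{2r+2}$; there I would simply invoke the standard Bramble--Hilbert estimate for Lagrange interpolation of degree $2m+1\ge 2r+1$ (and C\'ea's lemma for the Ritz projection) to obtain the bound $ch^{2r+2}\|v\|_{\dH{2r+2}}$. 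The work is then concentrated in the corner patches.

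On each annulus $D_j$ with $1\le j\le J$ the mesh is locally quasi-uniform of size $h_j\sim d_j^{1-\gamma}h$ and $D_j$ is bounded away from $x_0$, so $v|_{D_j}\in H^{2r+2}(D_j)$ and the scaled local interpolation estimate (applied triangle by triangle and summed) gives
\[
\|v-I_hv\|_{L^2(D_j)}+h_j\|v-I_hv\|_{H^1(D_j)}\le c\,h_j^{2r+2}\,|v|_{H^{2r+2}(D_j)}.
\]
Substituting $h_j\sim d_j^{1-\gamma}h$ together with the bound $|v|_{H^{2r+2}(D_j)}\le c\,d_j^{\,\min(1,\pi/\theta)-(2r+1)}\|v\|_{\dH{2r+2}}$ from Lemma \ref{LemmaA} yields $\|v-I_hv\|_{H^1(D_j)}\le c\,h^{2r+1}\,d_j^{\,\min(1,\pi/\theta)-\gamma(2r+1)}\|v\|_{\dH{2r+2}}$, with one extra factor $d_j^{1-\gamma}h$ in the $L^2$ norm. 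Since $\gamma<\min(1,\pi/\theta)/(2m+1)\le\min(1,\pi/\theta)/(2r+1)$, the exponent $\min(1,\pi/\theta)-\gamma(2r+1)$ is strictly positive, so the geometric series $\sum_{j\ge 1}d_j^{\,2(\min(1,\pi/\theta)-\gamma(2r+1))}$ converges with a sum bounded independently of $J$ and $h$, and the analogous $L^2$ series has an even larger positive exponent and is likewise summable; squaring the local bounds, summing over $j$, and taking square roots then shows that $D_0\setminus D_*$ contributes at most $ch^{2r+2}\|v\|_{\dH{2r+2}}$.

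On the innermost patch $D_*$ the mesh is uniform of size $h_*\sim h^{1/\gamma}$, and $v\in\dH{2}\subset H^2(D_*)$ but in general $v\notin H^{2r+2}$ near $x_0$, so I would use only the lowest-order estimate $\|v-I_hv\|_{L^2(D_*)}+h_*\|v-I_hv\|_{H^1(D_*)}\le c\,h_*^2\|v\|_{H^2(D_*)}\le c\,h_*^2\|v\|_{\dH{2r+2}}$; because $\gamma<1/(2r+1)$ one has $h_*^2=h^{2/\gamma}\le h^{2r+2}$ and $h\,h_*=h^{1+1/\gamma}\le h^{2r+2}$ for small $h$, which is consistent with the claimed rate. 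Summing the three contributions over the finitely many corners proves \eqref{eqn:int-err}. For \eqref{Error-solution-f}, the $H^1$ estimate follows from \eqref{eqn:int-err} via C\'ea's lemma, $\|v-R_hv\|_{\dH{1}}\le\inf_{\chi\in X_h}\|v-\chi\|_{\dH{1}}\le\|v-I_hv\|_{\dH{1}}$, and the $L^2$ estimate via Aubin--Nitsche duality: for $g\in L^2(\Omega)$ set $w=A^{-1}g\in\dH{2}$, write $(v-R_hv,g)=(\nabla(v-R_hv),\nabla(w-I_hw))$ by Galerkin orthogonality, and apply the already-established case $r=0$ of \eqref{eqn:int-err} to $w$ to get $\|w-I_hw\|_{\dH{1}}\le ch\|w\|_{\dH{2}}\le ch\|g\|_{L^2(\Omega)}$; dividing by $\|g\|_{L^2(\Omega)}$ then completes the argument.

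The hard part will be the middle step: combining the scaled local interpolation estimate with the weighted regularity of Lemma \ref{LemmaA} and verifying that the dyadic sum over the annuli actually converges. This is exactly where the grading condition $\gamma<\min(1,\pi/\theta)/(2m+1)$ is used, and it is sharp --- the binding case is $r=m$ in the $H^1$ norm; a larger $\gamma$ would force a logarithmic loss or destroy summability altogether.
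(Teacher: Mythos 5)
Your argument follows essentially the same route as the paper's own proof: dyadic decomposition $D_0=D_*\cup\bigcup_j D_j$ near each corner, a scaled local interpolation estimate on each annulus with $h_j\sim d_j^{1-\gamma}h$ combined with the weighted bound of Lemma \ref{LemmaA}, summation of the resulting geometric series using precisely the grading condition $\gamma<\min(1,\pi/\theta)/(2m+1)$, and then C\'ea's lemma plus an Aubin--Nitsche duality argument (using the $r=0$ interpolation estimate) for the Ritz projection. All of that is correct and matches the paper, including the exponent bookkeeping $h^{2r+1}d_j^{\min(1,\pi/\theta)-\gamma(2r+1)}$ in $H^1$ and the extra factor $d_jh$-type gain in $L^2$.

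The one genuine gap is your treatment of the innermost patch $D_*$. You write ``$v\in\dH{2}\subset H^2(D_*)$'' and invoke the standard estimate $\|v-I_hv\|_{L^2(D_*)}+h_*\|v-I_hv\|_{H^1(D_*)}\le ch_*^2\|v\|_{H^2(D_*)}$. This inclusion holds only at a convex corner ($\theta<\pi$). The proposition and the mesh condition \eqref{mesh-condition} are written with $\min(1,\pi/\theta)$ precisely so as to cover corners with $\theta>\pi$, and there $v\in\dH{2}$ has the singular component $c|x-x_0|^{\pi/\theta}\sin(\arg(x-x_0))$ with $\pi/\theta<1$, which is not in $H^2(D_*)$; your bound on $D_*$ therefore fails in that case. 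The paper handles this by the singular expansion $v|_{D_0}=c|x-x_0|^{\pi/\theta}\sin(\arg(x-x_0))+w$, $w\in H^2$, together with a Besov-space interpolation estimate, yielding $\|v-I_hv\|_{H^1(D_*)}\le Ch_*^{\min(1,\pi/\theta)}\|\Delta v\|_{L^2(\Omega)}$ (this is \eqref{H1-local}), which is exactly where the exponent $\min(1,\pi/\theta)$ enters the choice $h_*\sim h^{1/\gamma}$. If you restrict to the convex polygons assumed in the main body of the paper, your shortcut on $D_*$ is legitimate and your proof is complete; to obtain the proposition in the generality its statement (and Lemma \ref{LemmaA}) is aiming at, you need to replace the $H^2$ estimate on $D_*$ by such a singular-expansion/Besov argument, and correspondingly adjust the $L^2$ bound on $D_*$ to $Ch_*^{1+\min(1,\pi/\theta)}\|\Delta v\|_{L^2(\Omega)}$.
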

\begin{proof}
Let $v\in \dH{2r+2}$ and $s=2r+1$, with $0\le r\le m$. We consider the following decomposition:
\begin{align}\label{v-Ihv}
\|v-I_hv\|_{H^1(D_0)}^2
&= \|v-I_hv\|_{H^1(D_*)}^2+\sum_{j=1}^J \|v-I_hv\|_{H^1(D_j)}^2 \notag\\
&\le
Ch_*^{2\min(1,\pi/\theta)} \|\Delta v\|_{L^2(\Omega)}^2 + C\sum_{j=1}^Jh_j^{2s}\|v\|_{H^{s+1}(D_j')}^2 ,
\end{align}
and
\begin{align}\label{v-Ihv-L2}
\|v-I_hv\|_{L^2(D_0)}^2
&= \|v-I_hv\|_{L^2(D_*)}^2+\sum_{j=1}^J \|v-I_hv\|_{L^2(D_j)}^2 \notag\\
&\le
Ch_*^{2+2\min(1,\pi/\theta)} \|\Delta v\|_{L^2(\Omega)}^2 + C\sum_{j=1}^Jh_j^{2s+2}\|v\|_{H^{s+1}(D_j')}^2 ,
\end{align}
where $h_j=\max_{x\in D_j} \hbar(x)$, and we have used the following result:
\begin{align}\label{H1-local}
\|v-I_hv\|_{H^1(D_*)}
\le Ch_*^{\min(1,\pi/\theta)} \|\Delta v\|_{L^2(\Omega)} .
\end{align}
In the case $\pi/\theta<1$, this result follows from
$$
\|v-I_hv\|_{H^1(D_*)}
\le Ch_*^{\min(1,\pi/\theta)} \|v\|_{B^{\min(1,\pi/\theta)}_{2,\infty}(\Omega)}
\le Ch_*^{\min(1,\pi/\theta)} \|\Delta v\|_{L^2(\Omega)} ,
$$
where $B^{\min(1,\pi/\theta)}_{2,\infty}(\Omega)$ is the Besov space.
The last inequality is a consequence of the singularity expansion
$$v|_{D_0} = c|x-x_0|^{\pi/\theta}\sin({\rm arg}(x-x_0)) + w~~ \text{for some} ~ c\in\R~\text{and}~ w\in H^2(D_0), $$
where $|c| \le \| \Delta v \|_{L^2\II}$.
In the case $\pi/\theta>1$, \eqref{H1-local} follows from the standard estimate for the Lagrange interpolation, i.e.,
$$
\|v-I_hv\|_{H^1(D_*)}
\le Ch_* \|v\|_{H^2(\Omega)}
\le Ch_* \|\Delta v\|_{L^2(\Omega)} .
$$

By substituting the result of Lemma \ref{LemmaA} into \eqref{v-Ihv} and using the condition $\gamma_j<\frac{\min(1,\pi/\theta_j)}{r} $, we obtain
\begin{align*}
\|v-I_hv\|_{H^1(D_0)}^2 &\le
Ch_*^{2\min(1,\pi/\theta)} \|v\|_{\dot H^{2m+2}(\Omega)}^2
+
\sum_{j} Ch_j^{2s} d_j^{-2s+2\min(1,\pi/\theta_j) } \|v\|_{\dot H^{2r+2}(\Omega)}^2\\
&\le
Ch^{2s} \|v\|_{\dot H^{2r+2}(\Omega)}^2
+\sum_{j} C d_j^{(1-\gamma_j)2s-2s+2\min(1,\pi/\theta_j) } h^{2s} \|v\|_{\dot H^{2r+2}(\Omega)}^2\\
&\le
Ch^{2s}\|v\|_{\dot H^{2r+2}(\Omega)}^2+\sum_{j} C d_j^{2s\big(\frac{\min(1,\pi/\theta_j)}{s} - \gamma_j \big) }
h^{2s} \|v\|_{\dot H^{2r+2}(\Omega)}^2 \\
&\le Ch^{2s} \|v\|_{\dot H^{2r+2}(\Omega)}^2 .
\end{align*}
This proves that, by substituting $s=2r+1$ into the inequality above,
\begin{align*}
\|v-I_hv\|_{H^1(\Omega)}&\le Ch^{2r+1} \|v\|_{\dot H^{2r+2}(\Omega)} .
\end{align*}
Similarly, by substituting the result of Lemma \ref{LemmaA} into \eqref{v-Ihv-L2}, we obtain
\begin{align*}
\|v-I_hv\|_{L^2(\Omega)}&\le Ch^{r+1} \|v\|_{\dot H^{2r+2}(\Omega)} .
\end{align*}
This proves the desired estimate in \eqref{eqn:int-err}.

By the optimal $H^1$-norm approximation property of the Ritz projection,
we have
\begin{align*}
\|v-R_hv\|_{H^1(\Omega)}
\le C\|v-I_hv\|_{H^1(\Omega)}
\le Ch^{2r+1} \|v\|_{\dot H^{2r+2}(\Omega)} .
\end{align*}
By a standard duality argument, we obtain
\begin{align*}
\|v-R_hv\|_{L^2(\Omega)} \le Ch\|v-R_hv\|_{H^1(\Omega)} \le Ch^{2r+2} \|v\|_{\dot H^{2r+2}(\Omega)} .
\end{align*}
This proves the desired estimate in \eqref{Error-solution-f}.
\end{proof}


\begin{lemma} \label{lemma:err-I_h-Lyinfty}
If the mesh size satisfies condition \eqref{mesh-condition}, then the following estimate holds:
\begin{align}
 \| v - I_h v \|_{L^\infty(\Omega)} &\leq ch^{2r+1}\| v\|_{\dH{2r+2}}
\label{eqn:err-I_h-Lyinfty}
\quad\mbox{for}\,\,\, 0\le r\le m.
\end{align}
\end{lemma}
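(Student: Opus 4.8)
The plan is to localize the estimate exactly as in the proof of Proposition \ref{proposition:FEM}. Away from the corners of $\Omega$ the graded mesh \eqref{mesh-condition} is quasi-uniform of size $\sim h$ and $v\in\dH{2r+2}$ is locally in the classical space $H^{2r+2}$, with the corresponding norm bounded by $c\|v\|_{\dH{2r+2}}$; since $H^{2r+2}\hookrightarrow C(\overline{K})$ in two space dimensions, the elementwise Bramble--Hilbert estimate, rescaled to the reference element, gives $\|v-I_hv\|_{L^\infty(K)}\le ch_K^{2r+1}\|v\|_{H^{2r+2}(K)}$ on every shape-regular $K\in\mathcal{K}_h$ not touching a corner, hence $\|v-I_hv\|_{L^\infty}\le ch^{2r+1}\|v\|_{\dH{2r+2}}$ on the part of $\Omega$ bounded away from the corners. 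Since $\|\cdot\|_{L^\infty(\Omega)}$ equals the maximum of this quantity and of the norms over the finitely many corner patches $D_0=D_*\cup(\cup_{j=1}^JD_j)$, it remains only to bound $\|v-I_hv\|_{L^\infty(D_0)}$.

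On each dyadic annulus $D_j$ I would rescale by $\hat x=(x-x_0)/d_j$ onto a reference annulus $\hat D_j$ of unit size. By Assumption \ref{ass:mesh} the rescaled triangulation is uniformly shape-regular with mesh size $\hat h_j=h_j/d_j\sim d_j^{-\gamma}h$; the Lagrange interpolant commutes with this affine map and $L^\infty$ is scale invariant, so $\|v-I_hv\|_{L^\infty(D_j)}=\|\hat v-\hat I\hat v\|_{L^\infty(\hat D_j)}\le c\,\hat h_j^{\,2r+1}\|\hat v\|_{H^{2r+2}(\hat D_j')}$. The seminorms scale as $|\hat v|_{H^k(\hat D_j')}=d_j^{\,k-1}|v|_{H^k(D_j')}$, and Lemma \ref{LemmaA}, applied on the slightly enlarged annulus $D_j'$ of the same dyadic scale, gives $|v|_{H^k(D_j')}\le c\,d_j^{\,1-k+\min(1,\pi/\theta)}\|v\|_{\dH{2r+2}}$ for $0\le k\le 2r+2$, so that $\|\hat v\|_{H^{2r+2}(\hat D_j')}\le c\,d_j^{\,\min(1,\pi/\theta)}\|v\|_{\dH{2r+2}}$. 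Combining these,
\begin{equation*}
\|v-I_hv\|_{L^\infty(D_j)}\le c\,h^{2r+1}\,d_j^{\,\min(1,\pi/\theta)-\gamma(2r+1)}\,\|v\|_{\dH{2r+2}},
\end{equation*}
and since $r\le m$ and $\gamma<\min(1,\pi/\theta)/(2m+1)\le\min(1,\pi/\theta)/(2r+1)$, the exponent of $d_j$ is nonnegative, whence $d_j^{\,\min(1,\pi/\theta)-\gamma(2r+1)}\le d_0^{\,\min(1,\pi/\theta)-\gamma(2r+1)}\le C$ uniformly in $j$. Taking the maximum over $j$ — here $L^\infty$ is in fact a simplification, since no summation over the $O(\log h^{-1})$ dyadic levels is required — gives $\|v-I_hv\|_{L^\infty(\cup_jD_j)}\le ch^{2r+1}\|v\|_{\dH{2r+2}}$.

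On the innermost patch $D_*$, which carries a quasi-uniform mesh of size $h_*\sim h^{1/\gamma}$, I would argue directly: for a convex polygon $v\in\dH{2r+2}\subset H^2(\Omega)$ with $\|v\|_{H^2(\Omega)}\le c\|v\|_{\dH{2r+2}}$, so the standard $L^\infty$ interpolation estimate on a uniform mesh yields $\|v-I_hv\|_{L^\infty(D_*)}\le ch_*\|v\|_{H^2(D_*)}\le ch_*\|v\|_{\dH{2r+2}}\le ch^{2r+1}\|v\|_{\dH{2r+2}}$, because $h_*\sim h^{1/\gamma}$ with $1/\gamma>2m+1\ge 2r+1$ and $h\le1$; for a reentrant corner the same bound follows from the $L^\infty$-analogue of \eqref{H1-local}, obtained from the corner singular-function expansion of $v$ together with $v|_{\partial\Omega}=0$. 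Collecting the three contributions and taking the maximum gives \eqref{eqn:err-I_h-Lyinfty}. The step requiring the most care is the dyadic rescaling on the annuli $D_j$: one must check that the reference-element interpolation constant is independent of $j$ (this is exactly the shape-regularity built into Assumption \ref{ass:mesh}) and track the power of $d_j$ so that the grading restriction $\gamma<\min(1,\pi/\theta)/(2m+1)$ — which through $r\le m$ also controls $D_*$ via $1/\gamma>2m+1\ge 2r+1$ — makes that power nonnegative; with this in hand, the region away from the corners is routine.
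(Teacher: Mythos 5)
Your proof is correct and takes essentially the same route as the paper: on each dyadic annulus $D_j$ you combine the local $L^\infty$ interpolation estimate (your rescaling to a unit-size annulus is exactly the scaling argument behind it) with Lemma \ref{LemmaA} and $h_j\sim d_j^{1-\gamma}h$, arriving at the same factor $d_j^{\min(1,\pi/\theta)-(2r+1)\gamma}h^{2r+1}$, which the grading condition $\gamma<\min(1,\pi/\theta)/(2m+1)$ renders uniformly bounded, with no summation over dyadic levels needed. The only difference is that you also treat the innermost patch $D_*$ and the region away from the corners explicitly, which the paper's proof leaves implicit.
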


\begin{proof}
The basic $L^\infty$ estimates of the Lagrange interpolation says that
\begin{align*}
\|v-I_hv\|_{L^\infty(D_j)}
&\le c h_j^{2r+1} \|v\|_{H^{2r+2}(D_j')} .
\end{align*}
Since $\|v\|_{H^{2r+2}(D_j')} \le cd_j^{-2r-1+\min(1,\pi/\theta)}\|v\|_{\dot H^{2r+2}(\Omega)}$, it follows that
\begin{align*}
\|v-I_hv\|_{L^\infty(D_j)}
&\le cd_j^{-2r-1+\min(1,\pi/\theta)} h_j^{2r+1} \|v\|_{\dot H^{2r+2}(\Omega)} \\
&\le cd_j^{-2r-1+\min(1,\pi/\theta)} d_j^{(1-\gamma)(2r+1)}h^{2r+1} \|v\|_{\dot H^{2r+2}(\Omega)} \\
&\le cd_j^{\min(1,\pi/\theta)-(2r+1)\gamma} h^{2r+1} \|v\|_{\dot H^{2r+2}(\Omega)} .
\end{align*}
Then \eqref{eqn:err-I_h-Lyinfty} follows from the condition $\gamma<\min(1,\pi/\theta)/(2m+1)\le \min(1,\pi/\theta)/(2r+1)$ in \eqref{mesh-condition}.
\end{proof}

\section*{Acknowledgements}
The research of B. Li is partially supported by Hong Kong Research Grants Council  (GRF Project No. 15300817) and an internal grant of The Hong Kong Polytechnic University (Project ID: P0031035, Work Programme: ZZKQ). 
The research of Z. Zhou is partially supported by Hong Kong Research Grants Council (Project No. 	25300818) and an internal grant of The Hong Kong Polytechnic University (Project ID: P0031041, Work Programme: ZZKS).

\bibliographystyle{abbrv}

\end{document}